\numberwithin{equation}{section}
\newtheorem{thm}{Theorem}[section]
\newtheorem*{thm*}{Theorem}
\newtheorem{lem}[thm]{Lemma}
\newtheorem*{prob*}{Problem}
\newtheorem{prop}[thm]{Proposition}
\newtheorem*{prop*}{Proposition}
\newtheorem{cor}[thm]{Corollary}
\newtheorem*{cor*}{Corollary}
\theoremstyle{definition}
\newtheorem{defn}[thm]{Definition}
\newtheorem*{defn*}{Definition}
\newtheorem{rem}[thm]{Remark}
\newtheorem{question}[thm]{Question}
\newtheorem*{question*}{Question}
\newtheorem*{Pquestion*}{Popa's question}
\newtheorem*{conv*}{Convention}
\newcommand{\N}{\mathbb{N}}
\newcommand{\C}{\mathbb{C}}
\newcommand{\G}{\mathbb{G}}
\newcommand{\X}{\mathbb{X}}
\newcommand{\E}{\mathbb{E}}
\newcommand{\V}{\mathbb{V}}
\newcommand{\M}{\mathbb{M}}
\newcommand{\bS}{\mathbb{S}}
\newcommand{\cC}{\mathcal{C}}
\newcommand{\cH}{\mathcal{H}}
\newcommand{\cU}{\mathcal{U}}
\newcommand{\ee}{\varepsilon}
\newcommand{\Stab}{\operatorname{Stab}}
\newcommand{\Aut}{\operatorname{Aut}}
\newcommand{\supp}{\operatorname{supp}}
\newcommand{\Sym}{\textrm{Sym}}
\begin{document}

\title[Sofic actions on graphs]
{Sofic actions on graphs}

\author[David Gao]{David Gao}
\address{Department of Mathematical Sciences, UCSD, 9500 Gilman Dr, La Jolla, CA 92092, USA}\email{weg002@ucsd.edu}
\author[Srivatsav Kunnawalkam Elayavalli]{Srivatsav Kunnawalkam Elayavalli}
\address{Department of Mathematical Sciences, UCSD, 9500 Gilman Dr, La Jolla, CA 92092, USA}\email{srivatsav.kunnawalkam.elayavalli@vanderbilt.edu}
\urladdr{https://sites.google.com/view/srivatsavke/home}

\author[Gregory Patchell]{Gregory Patchell}
\address{Department of Mathematical Sciences, UCSD, 9500 Gilman Dr, La Jolla, CA 92092, USA}\email{gpatchel@ucsd.edu}
\urladdr{https://sites.google.com/view/gpatchel/home}

\begin{abstract}
We develop a theory of soficity for actions on graphs and obtain new applications to the study of sofic groups. We establish various examples, stability and permanence properties of sofic actions on graphs, in particular soficity is preserved by taking several natural graph join operations. We prove that an action of a group on its Cayley graph is sofic if and only if the group is sofic. We show that arbitrary actions of amenable groups on graphs are sofic. Using a graph theoretic result of E. Hrushovski, we also show that arbitrary actions of free groups on graphs are sofic. Notably we show that arbitrary actions of sofic groups on graphs, with amenable stabilizers, are sofic, settling completely an open problem from \cite{gao2024soficity}. We also show that soficity is preserved by taking limits under a natural Gromov-Hausdorff topology, generalizing prior work of the first author \cite{gao2024actionslerfgroupssets}. Our work sheds light on a family of groups called graph wreath products, simultaneously generalizing  graph products  and generalized wreath products.  Extending various prior results in this direction including soficity of generalized wreath products \cite{gao2024soficity}, B. Hayes and A. Sale \cite{HayesSale}, and soficity of graph products \cite{CHR, charlesworth2021matrix},  we show that graph wreath products are sofic if the action and acting groups are sofic. These results provide several new examples of sofic groups in a  systematic manner.

\end{abstract}
\maketitle

\section{Introduction}

This work builds a theory of soficity for group actions in the general setting of actions on graphs, significantly extending our previous work \cite{gao2024soficity}, and also settles certain important problems left open there. Our work has applications to the study of sofic groups, in particular in identifying new examples of sofic groups which is a very timely and interesting activity due to the utility in resolution of certain important problems and conjectures \cite{surveysofic, KLi, elekszabodirect, Bowenfinvariant, Luck, ThomDiophantine, Hayes5, jung2016ranktheoreml2invariantsfree}. The main results of this paper are listed as Theorems in the introduction.

 Our definition of soficity for group actions on graphs uses the concept of orbit approximations introduced in \cite{gao2024soficity} adapted to consider graph embeddings. In this paper graphs $\Gamma = (V,E)$ will be understood to be undirected and with no multi-edges and no self-loops. Vertices with infinite degree are permitted. In other words, $E\subset V \times V$ is any symmetric relation that excludes the diagonal. The graph complement $\Gamma^c$ has vertex set $V$ and edge set $E^c$ where $E^c = \{(v,w) : v\neq w\text{ and } (v,w)\not\in E\}$. A graph embedding $\pi:\Gamma_1=(V_1,E_2)\to\Gamma_2=(V_2,E_2)$ is an injection from $V_1$ to $V_2$ such that for all $v,w\in V_1,$ $(v,w)\in E_1$ if and only if $(\pi(v),\pi(w))\in E_2$. The group of automorphisms of $\Gamma$ will be denoted $\Aut(\Gamma)$ and will consist of all surjective graph embeddings $\pi:\Gamma\to\Gamma.$ An action of a group $G$ on a graph $\Gamma$ is a homomorphism of $G$ to $\Aut(\Gamma).$

In the following, $d$, when denoting a metric on a symmetric group of a finite set, shall always be understood to be the normalized Hamming distance, unless specified otherwise. The following is our finitary definition  of sofic actions on graphs: 

\begin{defn}\label{main defn}
Let $G$ be a countable discrete group, $\Gamma = (V,E)$ be a countable graph, $\alpha: G \curvearrowright \Gamma$ be an action, $A$ be a finite set, $\varphi: G \rightarrow \Sym(A)$ be a map (not necessarily a homomorphism):
\begin{enumerate}
    \item $\varphi$ is called \textit{unital} if $\varphi(1_G) = 1$;
    \item For a finite subset $F \subset G$ and $\epsilon > 0$, $\varphi$ is called $(F, \epsilon)$-\textit{multiplicative} if $d(\varphi(gh), \varphi(g)\varphi(h)) < \epsilon$ for all $g, h \in F$;
    \item For finite subsets $F \subset G$, $W \subset V$, and $\epsilon > 0$, $\varphi$ is called an $(F, W, \epsilon)$-\textit{orbit approximation of} $\alpha$ if there exists a finite graph $B$ and a subset $S \subset A$ such that $|S| > (1 - \epsilon)|A|$ and for each $s \in S$ there is a graph embedding $\pi_s: (W,E\mid_{W\times W}) \hookrightarrow B$ such that $\pi_{\varphi(g)s}(v) = \pi_s(\alpha(g^{-1})v)$ for all $s \in S$, $g \in F$, $v \in W$, whenever $\varphi(g)s \in S$ and $\alpha(g^{-1})v \in W$;
    \item Recall that $G$ is called \textit{sofic} if for all finite subsets $F\subset G$ and $\epsilon > 0$ there exists a finite set $A$ and a map $\varphi: G\to \Sym(A)$ which is unital, $(F,\epsilon)$-multiplicative, and $d(1, \varphi(g)) > 1 - \epsilon$ for all $g\in F\setminus\{e\}.$
    \item $\alpha$ is called \textit{sofic} if for all finite subsets $F \subset G$, $W \subset V$, and $\epsilon > 0$, there exists a finite set $A$ and a map $\varphi: G \rightarrow \Sym(A)$ which is unital, $(F, \epsilon)$-multiplicative, and an $(F, W, \epsilon)$-orbit approximation of $\alpha$.
\end{enumerate}
\end{defn}

 The infinitary definition is based on a commutative diagram in an ultraproduct construction involving the random graph on $\aleph_0$ vertices. This is discussed in Section \ref{ultra def} and proved to be equivalent to the finitary definition.

\subsection*{Permanence properties of sofic actions on graphs}
We document several basic and intuitive permamence results about sofic actions on graphs (for proofs see Proposition \ref{permanence_simple}).

\begin{prop}
    Let $\alpha:G\curvearrowright \Gamma=(V,E)$ be an action.
    \begin{enumerate}
        \item If $\pi:H\to G$ is a group homomorphism and $\alpha$ is sofic, then $\alpha\circ \pi:H\to \Aut(\Gamma)$ is a sofic action of $H$ on $\Gamma.$ Of particular interest are when $H$ is a subgroup of $G$ and when $\pi$ is a quotient map.
        \item Let $V_0 \subset V$ be invariant under the action of $G$. If $\alpha$ is sofic, then the restriction of $\alpha$ to $(V_0,E\mid_{V_0\times V_0})$ is sofic.
        \item If $G_1\subset G_2 \subset \cdots$ is an increasing sequence of subgroups of $G$ whose union is $G,$ and $\alpha:G\curvearrowright\Gamma$ restricted to each $G_i$ is sofic, then $\alpha$ is sofic.
        \item If $G\curvearrowright\Gamma$ is sofic, then $G\curvearrowright V$ is sofic in the sense of \cite{gao2024soficity}. If $G\curvearrowright V$ is sofic in the sense of \cite{gao2024soficity} then $G\curvearrowright(V,\varnothing)$ and $G\curvearrowright(V,(V \times V) \setminus D)$ are sofic. Here, $D \subset V \times V$ is the diagonal, so $(V,(V \times V) \setminus D)$ is the complete graph on the vertex set $V$.
        \item $G\curvearrowright\Gamma$ is sofic if and only if $G\curvearrowright \Gamma^c$ is sofic. We define $\Gamma^c = (V,E^c)$ where $(v,w)\in E^c$ if and only if $v\neq w$ and $(v,w)\not\in E.$
    \end{enumerate}
\end{prop}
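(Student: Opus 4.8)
The plan is to verify each item directly against Definition~\ref{main defn}, in every case manufacturing the data required for the target action out of the data witnessing soficity of $\alpha$ (or, in item~(3), of its restrictions), with no quantitative loss.

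Items (1)--(3) are all of the form ``it suffices to control a suitable finite subset of the group''. For (1), given finite $F\subset H$, finite $W\subset V$, and $\epsilon>0$, I would apply soficity of $\alpha$ to $\pi(F)\subset G$, the same $W$, and the same $\epsilon$, obtaining $\psi\colon G\to\Sym(A)$; then $\varphi:=\psi\circ\pi$ is unital (as $\pi(1_H)=1_G$), is $(F,\epsilon)$-multiplicative (since $\pi(g),\pi(h)\in\pi(F)$ for $g,h\in F$), and the finite graph $B$ and embeddings $\pi_s$ witnessing that $\psi$ is a $(\pi(F),W,\epsilon)$-orbit approximation of $\alpha$ also witness that $\varphi$ is an $(F,W,\epsilon)$-orbit approximation of $\alpha\circ\pi$, because $\pi_{\varphi(g)s}(v)=\pi_{\psi(\pi(g))s}(v)=\pi_s(\alpha(\pi(g)^{-1})v)=\pi_s((\alpha\circ\pi)(g^{-1})v)$. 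For (2), since $W\subset V_0$ we have $(E\mid_{V_0\times V_0})\mid_{W\times W}=E\mid_{W\times W}$, so an $(F,W,\epsilon)$-orbit approximation of $\alpha$ is literally one of the restricted action and soficity is inherited verbatim. For (3), given finite $F\subset G$ pick $i$ with $F\subset G_i$ (possible as $F$ is finite and the $G_i$ exhaust $G$), apply soficity of $\alpha\mid_{G_i}$ to $(F,W,\epsilon)$ to get $\varphi_0\colon G_i\to\Sym(A)$, and extend $\varphi_0$ to $\varphi\colon G\to\Sym(A)$ arbitrarily (e.g.\ by the identity off $G_i$); every condition in Definition~\ref{main defn}(5) refers only to elements of $F$, and $gh\in G_i$ when $g,h\in F$ since $G_i$ is a subgroup, so the extension is irrelevant.

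Item (4) is a matter of unwinding definitions. A graph $(F,W,\epsilon)$-orbit approximation of $\alpha$ on $\Gamma$, after forgetting the edge set of $B$, is precisely a set-theoretic orbit approximation in the sense of \cite{gao2024soficity}, giving ``$G\curvearrowright\Gamma$ sofic $\Rightarrow G\curvearrowright V$ sofic''. Conversely, given a set orbit approximation with injections $\pi_s\colon W\hookrightarrow B$, equipping $B$ with the empty edge set makes each $\pi_s$ vacuously a graph embedding $(W,\varnothing)\hookrightarrow(B,\varnothing)$ (as $E=\varnothing$ on $(V,\varnothing)$), so $G\curvearrowright(V,\varnothing)$ is sofic; equipping $B$ instead with all off-diagonal edges works for the complete graph, since injectivity of $\pi_s$ gives, for distinct $v,w\in W$, that $(v,w)$ is an edge iff $\pi_s(v)\neq\pi_s(w)$ iff $(\pi_s(v),\pi_s(w))$ is an edge of the complete graph on $B$. (The complete-graph case can alternatively be deduced from the empty-graph case via item~(5).)

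For (5), the crux is the observation that an injection $\pi\colon W\to B$ is a graph embedding $(W,E\mid_{W\times W})\hookrightarrow B$ if and only if it is a graph embedding $(W,E^c\mid_{W\times W})\hookrightarrow B^c$: for distinct $v,w\in W$ (hence $\pi(v)\neq\pi(w)$ by injectivity), the equivalence ``$(v,w)\in E\Leftrightarrow(\pi(v),\pi(w))\in E_B$'' is, upon negating both sides, the equivalence ``$(v,w)\in E^c\Leftrightarrow(\pi(v),\pi(w))\in E_B^c$''. Since $\Aut(\Gamma)=\Aut(\Gamma^c)$, the homomorphism $\alpha$ is simultaneously an action on $\Gamma$ and on $\Gamma^c$, and the compatibility identity $\pi_{\varphi(g)s}(v)=\pi_s(\alpha(g^{-1})v)$ involves no edges; hence the same triple $(\varphi,S,(\pi_s)_{s\in S})$ witnesses an $(F,W,\epsilon)$-orbit approximation of $\alpha$ on $\Gamma$ with finite graph $B$ exactly when it witnesses one of $\alpha$ on $\Gamma^c$ with finite graph $B^c$, which yields both implications at once. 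None of the five items is genuinely hard; the only place requiring attention is this complement bookkeeping --- remembering that injectivity of the $\pi_s$ is exactly what makes negating the embedding condition legitimate --- and its reuse in the complete-graph case of (4).
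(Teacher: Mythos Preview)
Your proof is correct and follows essentially the same approach as the paper: each item is verified directly from the finitary definition by recycling the data $(A,\varphi,B,S,(\pi_s))$ from $\alpha$, with the same key observations (precompose with $\pi$ for (1), restrict for (2), extend arbitrarily for (3), forget/add trivial edge data for (4), and pass to $B^c$ for (5)). Your write-up is in fact slightly more careful than the paper's in explicitly noting that injectivity of $\pi_s$ is what makes the negation in item~(5) legitimate.
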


We show a general result involving permanence of soficity for products and coproducts of sofic actions on graphs.

\begin{defn}
    Let $\Gamma_i=(V_i,E_i)$ be two graphs. Let $\Gamma_1 \prod \Gamma_2 = (V,E)$ denote any product between $\Gamma_1$ and $\Gamma_2$ of the following form:
    \begin{enumerate}
        \item[(i)] $V = V_1\times V_2$, and
        \item[(ii)] The truth value of $((v_1,v_2),(w_1,w_2)) \in E$ is logically determined by the truth values of $v_1=w_1,$ $v_2=w_2,$ $(v_1,w_1)\in E_1,$ and $(v_2,w_2)\in E_2.$
    \end{enumerate}
    The Cartesian, tensor, lexicographical, strong, co-normal, modular, and homomorphic products of graphs are all examples of graph products in our sense \cite{wiki:graphprod}. For example, if $\prod = \times,$ the Cartesian product, then $((v_1,v_2),(w_1,w_2)) \in E$ if and only if $(v_1,v_2)\in E_1$ and $w_1=w_2$ or $v_1=v_2$ and $(w_1,w_2)\in E_2$.

    We remark that if $G_i\curvearrowright \Gamma_i=(V_i,E_i)$, $i=1,2$ are actions of groups on graphs then the product action $G_1\times G_2 \curvearrowright \Gamma_1 \prod \Gamma_2$ defined by $(g_1,g_2) (v_1,v_2) = (g_1v_1,g_2v_2)$ is also a graph action, since the action of each $G_i$ preserves the truth value of equality (by acting by injections) and the truth value of being an edge (by definition of graph embedding).

    Let $\Gamma_i=(V_i,E_i)$ be two graphs. The coproduct of $\Gamma_1$ and $\Gamma_2$ is denoted $\Gamma_1\sqcup \Gamma_2$ and has vertex set $V_1\sqcup V_2$ and has edge set $E = \{(v,w) : \exists i \ v,w\in V_i \text{ and } (v,w)\in E_i\}$.
\end{defn}

\begin{prop}[see Propositions \ref{co product propo}, \ref{product propo}]
        If $G_i\curvearrowright^{\alpha_i} \Gamma_i=(V_i,E_i)$, $i=1,2$ are sofic actions of groups on graphs, then so is $G_1\times G_2 \curvearrowright^\alpha \Gamma_1 \sqcup \Gamma_2$ defined by $(g_1,g_2) v = g_iv$ if $v\in V_i$ and so is $G_1\times G_2 \curvearrowright^\alpha \Gamma_1 \prod \Gamma_2$ defined by $(g_1,g_2) (v_1,v_2) = (g_1v_1,g_2v_2)$.

\end{prop}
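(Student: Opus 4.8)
The plan is to prove both statements directly from the finitary Definition \ref{main defn}, combining the two given sofic approximations ``slotwise.'' Fix finite sets $F\subset G_1\times G_2$, $W\subset V_1\sqcup V_2$ (resp.\ $W\subset V_1\times V_2$), and $\epsilon>0$. Let $F_i\subset G_i$ be the projections of $F$, and let $W_i\subset V_i$ be the projections (in the coproduct case, $W_i=W\cap V_i$). Apply soficity of $\alpha_i$ with data $(F_i,W_i,\epsilon')$ for a suitably small $\epsilon'$ depending on $\epsilon$, obtaining finite sets $A_i$, maps $\varphi_i:G_i\to\Sym(A_i)$ that are unital, $(F_i,\epsilon')$-multiplicative, and $(F_i,W_i,\epsilon')$-orbit approximations witnessed by finite graphs $B_i$ and large subsets $S_i\subset A_i$ with embeddings $\pi^{(i)}_s:(W_i,E_i|_{W_i\times W_i})\hookrightarrow B_i$. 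Put $A=A_1\times A_2$ and define $\varphi:G_1\times G_2\to\Sym(A)$ by $\varphi(g_1,g_2)=\varphi_1(g_1)\times\varphi_2(g_2)$. Unitality is immediate, and $(F,\epsilon)$-multiplicativity follows from a standard computation: $d$ on $\Sym(A_1\times A_2)$ satisfies $d(\sigma_1\times\sigma_2,\tau_1\times\tau_2)\le d(\sigma_1,\tau_1)+d(\sigma_2,\tau_2)$, so choosing $\epsilon'<\epsilon/2$ suffices.

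For the orbit-approximation condition, take $S=S_1\times S_2$, so $|S|=|S_1||S_2|>(1-\epsilon')^2|A|\ge(1-2\epsilon')|A|>(1-\epsilon)|A|$ for $\epsilon'$ small. The key step is to build, for each $(s_1,s_2)\in S$, a graph embedding of $(W,E|_{W\times W})$ into an appropriate finite graph $B$. In the coproduct case, set $B=B_1\sqcup B_2$ and let $\pi_{(s_1,s_2)}$ restrict to $\pi^{(1)}_{s_1}$ on $W_1$ and to $\pi^{(2)}_{s_2}$ on $W_2$; since in $\Gamma_1\sqcup\Gamma_2$ there are no edges between $V_1$ and $V_2$ and none in $B_1\sqcup B_2$ between $B_1$ and $B_2$, this is a graph embedding. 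In the product case, set $B=B_1\prod B_2$ using the \emph{same} product operation $\prod$, and let $\pi_{(s_1,s_2)}(v_1,v_2)=(\pi^{(1)}_{s_1}(v_1),\pi^{(2)}_{s_2}(v_2))$; because membership in the edge set of $\Gamma_1\prod\Gamma_2$ is a fixed Boolean function of the four truth values ``$v_1=w_1$'', ``$v_2=w_2$'', ``$(v_1,w_1)\in E_1$'', ``$(v_2,w_2)\in E_2$'', and each $\pi^{(i)}$ preserves both the equality relation (being injective) and the edge relation of $\Gamma_i$, the map $\pi_{(s_1,s_2)}$ preserves exactly the edge relation of $\Gamma_1\prod\Gamma_2$ into $B_1\prod B_2$; injectivity is clear. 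Finally one checks the compatibility relation $\pi_{\varphi(g)s}(v)=\pi_s(\alpha(g^{-1})v)$: writing $g=(g_1,g_2)$, $s=(s_1,s_2)$, $v=(v_1,v_2)$, this reduces coordinatewise to the relations $\pi^{(i)}_{\varphi_i(g_i)s_i}(v_i)=\pi^{(i)}_{s_i}(\alpha_i(g_i^{-1})v_i)$, which hold whenever $\varphi_i(g_i)s_i\in S_i$ and $\alpha_i(g_i^{-1})v_i\in W_i$ — and these hypotheses follow from the corresponding hypotheses ``$\varphi(g)s\in S$'' and ``$\alpha(g^{-1})v\in W$'' since $S=S_1\times S_2$ and $W$ projects into $W_1\times W_2$ (in the coproduct case the argument is the same but only one coordinate is ever active).

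The main obstacle, such as it is, is purely bookkeeping: making sure that the ``whenever'' clauses in part (3) of Definition \ref{main defn} are handled correctly, i.e.\ that when $v\in W$ but $\alpha(g^{-1})v\notin W$ for one of the coordinates, we are not obligated to say anything, and that passing from the product/coproduct hypotheses to the coordinatewise hypotheses on $S_i$ and $W_i$ is valid. There is no genuine analytic or combinatorial difficulty here — once the right finite graph $B$ is chosen as the corresponding product or coproduct of the $B_i$, the verification is mechanical, and the only quantitative input is the subadditivity of the Hamming distance under direct products and the elementary estimate $(1-\epsilon')^2\ge 1-2\epsilon'$. I would present the coproduct and product cases together, isolating the single sentence (the Boolean-function argument) where the defining property of $\prod$ is used.
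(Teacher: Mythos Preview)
Your proposal is correct and follows essentially the same approach as the paper: both arguments set $A=A_1\times A_2$, $\varphi=\varphi_1\times\varphi_2$, $S=S_1\times S_2$, and take $B=B_1\sqcup B_2$ (resp.\ $B=B_1\prod B_2$) with the obvious coordinatewise embeddings, then verify the orbit-approximation compatibility coordinatewise. The only cosmetic differences are that the paper enlarges $W$ to $W_1\times W_2$ in the product case and uses the exact identity $1-d(\varphi(gh),\varphi(g)\varphi(h))=\prod_i(1-d(\varphi_i(g_ih_i),\varphi_i(g_i)\varphi_i(h_i)))$ with $(1-\epsilon')^2\ge 1-\epsilon$, whereas you use the subadditivity bound with $\epsilon'<\epsilon/2$; these are equivalent bookkeeping choices.
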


    The previous result applies to countable infinite (co)products by induction and the fact that for each finite set $W\subset \bigsqcup V_i,$ $W$ only intersects finitely many of the $V_i.$ We also note that in the case $G=G_1=g_2$ we can view $G\leq G\times G$ via the diagonal embedding, giving an action of $G$ on the disjoint union of $\Gamma_1$ and $\Gamma_2.$

We also obtain that sofic actions on graphs imply the acting group has a large quotient which is sofic: 

\begin{prop}[see Proposition \ref{sofic-actions-imply-sofic-group}]

    If $G\curvearrowright^\alpha\Gamma = (V,E)$ is sofic, then there is $H\trianglelefteq G$ such that $H\leq \bigcap_{v\in V}\Stab(v)$ and $G/H$ is sofic.
\end{prop}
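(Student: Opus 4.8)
The plan is to realize the desired $H$ as the kernel of a homomorphism from $G$ into a metric ultraproduct of symmetric groups. Enumerate increasing finite exhaustions $F_1 \subseteq F_2 \subseteq \cdots$ of $G$ and $W_1 \subseteq W_2 \subseteq \cdots$ of $V$, fix a nonprincipal ultrafilter $\cU$ on $\N$, and use soficity of $\alpha$ to pick, for each $n$, a finite set $A_n$ and a map $\varphi_n \colon G \to \Sym(A_n)$ that is unital, $(F_n, 1/n)$-multiplicative, and an $(F_n, W_n, 1/n)$-orbit approximation of $\alpha$. Let $\cG = \prod_{n \to \cU} \Sym(A_n)$ be the metric ultraproduct, i.e.\ $\prod_n \Sym(A_n)$ modulo the normal subgroup of sequences $(\sigma_n)_n$ with $\lim_{n \to \cU} d(1, \sigma_n) = 0$; recall that $\cG$ is a sofic group and that every countable subgroup of $\cG$ is sofic.

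First I would check that $\Phi \colon G \to \cG$, $g \mapsto [(\varphi_n(g))_n]$, is a group homomorphism. Unitality of each $\varphi_n$ gives $\Phi(1_G) = 1$, and for fixed $g, h \in G$ we have $g, h \in F_n$ for all large $n$, so $d(\varphi_n(gh), \varphi_n(g)\varphi_n(h)) < 1/n \to 0$, whence $\Phi(gh) = \Phi(g)\Phi(h)$ in $\cG$. Set $H := \ker \Phi \trianglelefteq G$. Then $G/H$ embeds into $\cG$, so $G/H$ is sofic (it is countable, being a quotient of $G$). This already gives one half of the statement, and the choice of $\cU$ and of the approximating sequence is harmless since the statement only asks for the existence of some such $H$.

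Next I would show $H \leq \bigcap_{v \in V} \Stab(v)$, equivalently that every $h \in H$ acts as the identity on $\Gamma$. Suppose not: choose $h \in H$ and $v_0 \in V$ with $\alpha(h^{-1})v_0 \neq v_0$. For all large $n$ one has $h \in F_n$ and $v_0, \alpha(h^{-1})v_0 \in W_n$; fix such an $n$ and let a finite graph $B$, a subset $S \subseteq A_n$ with $|S| > (1 - 1/n)|A_n|$, and graph embeddings $(\pi_s)_{s \in S}$ witness the orbit approximation. Put $T := \{s \in S : \varphi_n(h)s \in S\}$; since $\varphi_n(h)$ is a bijection of $A_n$, at most $|A_n \setminus S|$ points of $S$ leave $S$, so $|T| > (1 - 2/n)|A_n|$. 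For each $s \in T$, the orbit-approximation identity at $g = h$, $v = v_0$ reads $\pi_{\varphi_n(h)s}(v_0) = \pi_s(\alpha(h^{-1})v_0)$; if we had $\varphi_n(h)s = s$, this would force $\pi_s(v_0) = \pi_s(\alpha(h^{-1})v_0)$, contradicting injectivity of the graph embedding $\pi_s$ together with $v_0 \neq \alpha(h^{-1})v_0$. Hence $\varphi_n(h)$ moves every point of $T$, so $d(1, \varphi_n(h)) \geq |T|/|A_n| > 1 - 2/n$. Since this holds for all large $n$, we get $\lim_{n \to \cU} d(1, \varphi_n(h)) = 1$, contradicting $h \in \ker \Phi$. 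Therefore $\alpha(h)$ is trivial, i.e.\ $h \in \Stab(v)$ for every $v \in V$, as desired.

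The only nonroutine point is the last step: converting the graph-embedding form of the orbit approximation into a quantitative displacement bound for $\varphi_n(h)$. This is exactly where the injectivity built into the notion of graph embedding is indispensable, and where one must be slightly careful about the bookkeeping (the bijection $\varphi_n(h)$ can push at most $|A_n \setminus S|$ points of $S$ out of $S$, which is what makes $|T|$ a $(1 - 2/n)$-proportion). Everything else — that metric ultraproducts of symmetric groups are sofic, that soficity passes to countable subgroups, and the exhaustion bookkeeping for $\Phi$ — is standard.
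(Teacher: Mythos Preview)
Your argument is correct and follows essentially the same strategy as the paper: build a homomorphism from $G$ into a metric ultraproduct of finite symmetric groups, take $H$ to be its kernel, and use injectivity of the orbit-approximation embeddings to force $H$ to act trivially on $V$. The only cosmetic difference is that the paper first packages the ultraproduct construction into a separate equivalence (their Proposition~\ref{ultra-prod}), obtaining an isometric map $\pi\colon V\to\V_\cU$ intertwining $\alpha$ with the action of $\Phi(G)$, and then concludes in one line from injectivity of $\pi$; you instead work directly from the finitary definition and extract the displacement bound $d(1,\varphi_n(h))>1-2/n$ by hand, which amounts to inlining the relevant piece of that equivalence.
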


\subsection*{Examples of sofic actions on graphs}

By using a rather standard argument involving Folner sets used prior in our work \cite{gao2024soficity}, the main difference being checking certain graph embedding criteria, we obtain that any action of an amenable group on a graph is sofic. Another family of groups that satisfy all actions are sofic, are free groups. This however, is quite tricky to obtain, in particular we needed a technical graph theoretic result of E. Hrushovski \cite{hrushovski1992extending}.

\begin{prop}[see Theorems \ref{amenable actions all}, \ref{free group actions all}]
    Any action of an amenable group or a free group on a graph is sofic.
\end{prop}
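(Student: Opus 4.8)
We treat the two families of groups in turn.

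For an \emph{amenable} group $G$ with an action $\alpha\colon G\curvearrowright\Gamma=(V,E)$, the plan is to run the Følner-set construction already used in \cite{gao2024soficity}, the only genuinely new point being the verification of the graph-embedding clause. Given finite $F\subset G$, $W\subset V$ and $\epsilon>0$, I would fix a finite $N\subset G$ that is sufficiently invariant under $F\cup F^{-1}\cup FF$ and put $A=N$; define $\varphi(g)$ to be left translation by $g$ on $N\cap g^{-1}N$ (a bijection onto $gN\cap N$), extended arbitrarily to a permutation of $N$. Then $\varphi$ is unital, and the usual Følner estimate gives $(F,\epsilon)$-multiplicativity. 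For the orbit approximation take $U=\{\alpha(n^{-1})w:n\in N,\ w\in W\}\subset V$, let $B=(U,E\mid_{U\times U})$ be the induced subgraph, and for each $n\in N$ let $\pi_n\colon W\to B$ be the restriction of $\alpha(n^{-1})$; since $\alpha(n^{-1})\in\Aut(\Gamma)$ this is a graph embedding of $(W,E\mid_{W\times W})$ into $B$. With $S=N\cap\bigcap_{g\in F}g^{-1}N$ (so $|S|>(1-\epsilon)|A|$ by invariance of $N$) one has $\varphi(g)n=gn$ for $n\in S$, $g\in F$, whence $\pi_{\varphi(g)n}(v)=\alpha\big((gn)^{-1}\big)v=\alpha(n^{-1})\alpha(g^{-1})v=\pi_n(\alpha(g^{-1})v)$ whenever $\alpha(g^{-1})v\in W$. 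Thus $\varphi$ witnesses $(F,W,\epsilon)$-soficity.

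The \emph{free} case is the substantial one. Using Proposition \ref{permanence_simple} I would first reduce to $G=\F_k$ free of finite rank on $x_1,\dots,x_k$ (a countable free group is the increasing union of its finitely generated free subgroups, and soficity of the action passes to such unions). The decisive simplification is that for a free group the approximating map $\varphi$ may be taken to be a genuine homomorphism $\F_k\to\Sym(A)$: only the permutations $\varphi(x_i)$ need be chosen, after which unitality and $(F,\epsilon)$-multiplicativity are automatic for all $F,\epsilon$. So the whole problem reduces to producing orbit approximations, and the cleanest vehicle for this is the ultraproduct reformulation of Section \ref{ultra def}: it suffices to exhibit a sequence of finite graphs $(B_n)$, a homomorphism $\Phi\colon\F_k\to\prod_{n\to\omega}\Aut(B_n)$, and a graph embedding $\iota$ of $\Gamma$ into $\mathcal{R}:=\prod_{n\to\omega}B_n$ that is equivariant: $\Phi(x_i)\circ\iota=\iota\circ\alpha(x_i)$.

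To build these, exhaust $\Gamma$ by finite induced subgraphs $P_1\subset P_2\subset\cdots$ chosen $\alpha$-compatibly (e.g.\ increasing balls around the orbit $\alpha(\F_k)W$ in a fixed proper graph metric, so that each $\alpha(x_i)$ maps a cofinal part of $P_n$ into $P_n$). Let $B_n$ be a finite graph furnished by Hrushovski's theorem \cite{hrushovski1992extending} applied to $P_n$: $P_n$ sits in $B_n$ as an induced subgraph and every partial isomorphism of $P_n$ extends to an automorphism of $B_n$. In particular each $\alpha(x_i)$, which restricts to a partial isomorphism of $P_n$, extends (after transport through $P_n\hookrightarrow B_n$) to some $\sigma_{i,n}\in\Aut(B_n)$. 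In the metric ultraproduct over a fixed free ultrafilter $\omega$ on $\N$, the inclusions $P_n\hookrightarrow B_n$ assemble to a graph embedding $\iota\colon\Gamma\hookrightarrow\mathcal{R}$; since $\F_k$ is free, $x_i\mapsto(\sigma_{i,n})_n$ extends to a homomorphism $\Phi\colon\F_k\to\prod_{n\to\omega}\Aut(B_n)$; and, by construction, on the image of $\Gamma$ this action agrees with $\alpha$, i.e.\ $\Phi(x_i)\circ\iota=\iota\circ\alpha(x_i)$. This is precisely the diagram required by the infinitary definition, and invoking its equivalence with Definition \ref{main defn} yields the desired finite orbit approximations, hence soficity of $\alpha$.

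The step I expect to be the crux is the extension of the $\alpha(x_i)$ to the $\sigma_{i,n}$: it is not enough to extend these graph automorphisms to automorphisms of the abstract ultraproduct $\mathcal{R}$ — one must have them arise from honest automorphisms of the \emph{finite} graphs $B_n$, so that $\Phi$ lands in the sofic group $\prod_{n\to\omega}\Aut(B_n)$ rather than merely in $\Aut(\mathcal{R})$. That a partial isomorphism of a finite graph extends to an automorphism of a (larger) finite graph is exactly Hrushovski's extension theorem, and this is the one genuinely non-elementary ingredient. A secondary, bookkeeping-level point is the density clause $|S|>(1-\epsilon)|A|$ of Definition \ref{main defn}: because $\F_k$ is non-amenable, one cannot use finite balls of $\Gamma$ directly as the index set and is pushed onto the ``closed-up'' models $B_n$; routing the argument through the ultraproduct formulation is what makes this disappear.
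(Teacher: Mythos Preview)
Your amenable case is correct and matches the paper's proof line for line: F\o lner set as $A$, left translation as $\varphi$, and $\pi_s=\alpha(s^{-1})|_W$ landing in the induced subgraph on $\alpha(A^{-1})W$.

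For the free case you have all the right ingredients---the reduction to $\F_k$ via Proposition~\ref{permanence_simple}(3), Hrushovski's extension theorem, and the observation that $\varphi$ can be taken to be a genuine homomorphism---but the routing through the ultraproduct formulation has a gap. The criterion you state, namely a $\Phi$-equivariant graph embedding $\iota\colon\Gamma\hookrightarrow\mathcal{R}=\prod_{n\to\omega}B_n$ with $\Phi\colon\F_k\to\prod_{n\to\omega}\Aut(B_n)$, is \emph{not} the diagram of Section~\ref{ultra def}. There the target of $\pi$ is $\X_\cU$, whose points are sequences of \emph{functions} $[n]\to V_u$, not sequences of vertices of the $B_n$, and the homomorphism must land in $\prod_\cU(S_n,d)$, permutations of the index sets $[n]$. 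At stage $n$ your data supplies a single embedding $\iota_n\colon P_n\hookrightarrow B_n$; Definition~\ref{main defn} demands a family $\{\pi_s\}_{s\in S}$ indexed by a large subset of some finite set $A$, and you never say what $A$ is. So the sentence ``This is precisely the diagram required by the infinitary definition'' is not correct as written.

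The missing translation is exactly the paper's move, done directly and finitarily without any ultraproduct: take $A=\Aut(B_n)$, let $\varphi(g)$ act by left multiplication by $\Phi_n(g)$, set $S=A$ (so the density clause holds with $\epsilon=0$), and define $\pi_s(v)=s^{-1}(\iota_n(v))$. Then
\[
\pi_{\varphi(g)s}(v)=(\Phi_n(g)s)^{-1}\iota_n(v)=s^{-1}\Phi_n(g^{-1})\iota_n(v)=s^{-1}\iota_n(\alpha(g^{-1})v)=\pi_s(\alpha(g^{-1})v),
\]
the third equality following from Hrushovski applied along a word for $g^{-1}$ once $P_n$ contains all intermediate images $\alpha(g_{i_l}^{\epsilon_l}\cdots g_{i_k}^{\epsilon_k})v$ (the paper calls this enlarged set $W'$). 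Once you insert this step your argument and the paper's coincide; but note that what makes the density clause disappear is not the ultraproduct formulation---it is the choice $A=\Aut(B_n)$, $S=A$.
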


The following is perhaps the most important result of this paper, and settles one of the important open question from \cite{gao2024soficity}. 

\begin{thm}[see Theorem \ref{amenable stabilizers}]
    Let $\alpha:G\curvearrowright\Gamma=(V,E)$ be an action of a sofic group such that $G\curvearrowright V$ is transitive and $\Stab(v)$ is amenable for some/all $v\in V.$ Then $\alpha$ is sofic.
\end{thm}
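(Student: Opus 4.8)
The plan is to reduce to a coset action and then sofic‑ify the \emph{exact} finitary model that exists when the group is finite.

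\smallskip
\emph{Reduction and the exact model.} Transitivity lets us fix $v_0$, set $H=\Stab(v_0)$, and identify $(V,E)$ with $G/H$ carrying the $G$‑invariant edge relation coming from $\Sigma:=\{g:(v_0,gv_0)\in E\}$; thus $\Sigma=\Sigma^{-1}$, $\Sigma$ is a union of double cosets of $H$ disjoint from $H$, and $(gH,g'H)\in E\iff g^{-1}g'\in\Sigma$. The only feature of $\Sigma$ used below is the \emph{coset dichotomy}: a right coset $Hx$ is contained in $\Sigma$ if $x\in\Sigma$ and disjoint from $\Sigma$ otherwise. When $G$ is finite there is an exact orbit approximation of $\alpha$: take $A=G$ with $\varphi=$ left translation, let $B=H\backslash G$ with $Hx\sim Hy\iff xy^{-1}\in\Sigma$, and put $\pi_c(g_iH):=Hg_i^{-1}c$ (for fixed lifts $g_i$ of the elements of $W$). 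One checks directly, using the coset dichotomy, that every $\pi_c$ is a graph embedding $(W,E|_W)\hookrightarrow B$, that $\pi_c$ is injective on $W$, and that $\pi_{\varphi(g)c}(g_iH)=\pi_c(g^{-1}g_iH)$ identically.

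\smallskip
\emph{The sofic model.} For sofic $G$ and amenable $H$ we mimic this with a sofic approximation $\psi\colon G\to\Sym(C)$ of $G$ in place of $G$ with left translation, and with an Ornstein--Weiss \emph{quasi‑tiling} of $C$ in place of the partition of $G$ into right cosets. Given test data $(F,W,\epsilon)$, with lifts $g_i$ and $H_0\subseteq H$ the finite set of $h_0$ satisfying $g^{-1}g_i=g_jh_0$ for some $g\in F$, $i,j\le m$: by amenability of $H$ fix a quasi‑tiling family $K_1,\dots,K_r\subseteq H$ (symmetric, containing $e$, $(H_0,\delta)$‑invariant) and a finite symmetric $\Sigma_0\subseteq\Sigma$ containing each $g_j^{-1}g_i\in\Sigma$; then, by soficity of $G$, choose $\psi$ unital, $(F_2,\epsilon_1)$‑multiplicative and $(1-\epsilon_1)$‑free on a large enough finite $F_2$. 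Applying the quasi‑tiling to the sofic approximation $\psi|_H$ of $H$ writes all but an $\epsilon$‑fraction of $C$ as a disjoint union of \emph{tiles} $\psi(K_\ell)d$ on which $\psi|_{K_\ell}$ is faithful. Set $A:=C$, $\varphi:=\psi$; let $B$ be the finite graph on the tiles with $T\sim T'$ iff $\psi(\sigma)e=e'$ for some $\sigma\in\Sigma_0$, $e\in T$, $e'\in T'$; and for $c$ with all $\psi(g_i^{-1})c$ inside tiles put $\pi_c(g_iH):=$ the tile of $\psi(g_i^{-1})c$.

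\smallskip
\emph{Verification and main obstacle.} Outside a union of finitely many sets each of density $O(\epsilon_1+\delta+\epsilon)$, the map $\pi_c$ is injective on $W$, is a graph embedding into $B$ — the finite‑model computation survives: $\pi_c(g_iH)\sim\pi_c(g_jH)$ reduces, via multiplicativity and freeness of $\psi$ together with the tile structure, to $Hg_j^{-1}g_i\cap\Sigma\neq\varnothing$, i.e.\ by the coset dichotomy to $g_i^{-1}g_j\in\Sigma$ — and satisfies $\pi_{\varphi(g)c}(g_iH)=\pi_c(g^{-1}g_iH)$ whenever $g^{-1}g_iH\in W$, the last because $g_i^{-1}g=h_0^{-1}g_j^{-1}$ with $h_0\in H_0$ and $h_0^{-1}K_\ell$ is $\delta$‑close to $K_\ell$, so $\psi(h_0^{-1})$ keeps a generic point in its tile. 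Together with $\varphi=\psi$ being unital and $(F,\epsilon)$‑multiplicative, this exhibits $\alpha$ as sofic once the nested constants are chosen ($\epsilon,\delta$ small, then $F_2$ large, then $\epsilon_1$ small). That nesting is the crux: one must quasi‑tile $\psi|_H$ and control the propagation of the approximate‑multiplicativity error of $\psi$ so that it crosses neither tile boundaries nor double‑coset boundaries. This is what genuinely uses amenability of $H$ (soficity of $G$ alone does not suffice), and it is cleanest in the ultraproduct formulation of Section~\ref{ultra def}: there $C$ becomes $(X,\mu)$, the quasi‑tiling becomes an honest measurable tiling of the orbit equivalence relation of $H\curvearrowright X$ — hyperfinite exactly because $H$ is amenable — and all the approximate identities above become exact.
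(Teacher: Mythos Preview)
Your proposal is correct and follows the same overarching strategy as the paper: identify $V$ with $G/H$, use amenability of $H$ to tile, take $B$ to be the graph on tiles with edges recording the $\Sigma$-relation, and set $\pi_s(g_iH)$ to be the tile containing $\psi(g_i^{-1})s$. The verification of injectivity, the graph-embedding property via the double-coset dichotomy $H\Sigma H=\Sigma$, and the equivariance via $(H_0,\delta)$-invariance of the tile shapes all parallel the paper's computations.

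The technical implementation differs in an interesting way. The paper first takes an \emph{exact} F{\o}lner tiling of $H$ itself (Downarowicz--Huczek--Zhang), builds a F{\o}lner sequence $A_n\subset H$ out of tiles, and then invokes the Elek--Szab\'o conjugacy theorem to transport an arbitrary sofic approximation of $G$ onto $A_n$, so that the underlying set is literally a subset of $H$ on which $H$ acts by honest left multiplication. You instead fix an arbitrary sofic approximation $\psi:G\to\Sym(C)$ and Ornstein--Weiss quasi-tile the restricted approximate $H$-action on $C$. Your route avoids the detour through Elek--Szab\'o and the exact-tiling result, at the price of having to manage the quasi-tiling's $\epsilon$-errors and to track multiplicativity at $c$ over the larger word-set $\bigl(\bigcup_\ell K_\ell K_\ell^{-1}\bigr)\cdot\Sigma_0\cdot\bigl(\bigcup_\ell K_\ell K_\ell^{-1}\bigr)\cdot\sigma(W)^{-1}$; the paper's route makes the tile structure exact and the $H$-action literal, which streamlines those checks. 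The two are morally equivalent --- the quasi-tiling of sofic approximations of amenable groups is precisely what drives the Elek--Szab\'o uniqueness --- and your closing remark that the ultraproduct formulation turns the quasi-tiling into an honest Ornstein--Weiss tiling of a hyperfinite relation is exactly right.
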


We now explain the sketch of the proof of the above. The argument uses the technique of Følner tilings, i.e., partition of an amenable group into translations of finitely many tiles, each of which can be assumed to be appropriately invariant under the action of given finitely many elements of group, up to some $\epsilon$. By choosing an appropriately invariant Følner set and enlarging it to include all tiles touching it, we can show that there is a Følner sequence s.t. each set in the sequence is a union of prefixed tiles. It is known that any two sofic approximations of an amenable group are conjugate \cite{elekszabosofic}. Using this, if $H < G$ is an infinite amenable subgroup of a sofic group, then we can obtain a sequence of sofic approximations of $G$ whose underlying sets are a given Følner sequence of $H$, and on which $H$ acts by left multiplication.

Now, applying the above results, we may choose a sequence of sofic approximations of $G$ whose underlying sets are a Følner sequence of $H$, each one a union of tiles appropriately invariant under the action of the finite set $K = \{\sigma(x)^{-1}g\sigma(g^{-1}x): g \in F, x \in W\}$ for some finite subset $F \subset G$ and $W \subset V$, and where $\sigma: V = G/H \to G$ is some fixed lifting map. The finite approximating graph $B$ is then formed by collapsing each tile into a single point and connecting tiles in such a way as to approximate how vertices are connected in $\Gamma$. For some large enough subset $S$ of the Følner set on which everything ``behaves well", $\pi_s$ sends $x \in W$ to the tile corresponding to $\sigma(x)^{-1}s$. The tiles being appropriately invariant under $K$ ensures $\pi_{\varphi(g)s}(v) = \pi_s(\alpha(g^{-1})v)$ for all $s \in S$, $g \in F$, $v \in W$, whenever $\varphi(g)s \in S$ and $\alpha(g^{-1})v \in W$. And having a sequence of sofic approximations ensures for large enough index, $g \in G \setminus H$ behaves sufficiently differently from elements of $H$, which are the only ones that can send a tile to itself. This ensures $\pi_s$ are injective. Følner tilings are needed as this step requires having a prefixed finite subset of $H$ s.t. behaving sufficiently differently from them are enough to ensure injectivity, and this finite subset comes from the fixed finitely many tiles. This shows the soficity of the action.

As a corollary of the previous result we immediately obtain the following result:
\begin{cor}
    A group is sofic if and only if its left multiplication action on any/all Cayley graph is sofic.
\end{cor}

A further rich source of examples can be obtained by considering limits under a natural Gromov-Hausdorff topology:

\begin{defn}\label{g-h-top-defn}
    Let $G$ be a countable discrete group. Let $\cC$ consist of all triples $(\Gamma, v, \alpha)$, where $\Gamma = (V, E)$ is a countable graph, $v \in V$, and $\alpha$ is a transitive action of $G$ on $\Gamma$. We impose a topology, which we shall call the \emph{Gromov-Hausdorff topology}, on $\cC$, by defining $(\Gamma_n = (V_n, E_n), v_n, \alpha_n) \to (\Gamma = (V, E), v, \alpha)$ iff for every finite subset $F \subset G$, there exists $N > 0$ s.t. whenever $n \geq N$, for any $g \in F$, we have $g \in \Stab_{\alpha_n}(v_n)$ iff $g \in \Stab_\alpha(v)$; and $v_n$ is connected to $\alpha_n(g)v_n$ iff $v$ is connected to $\alpha(g)v$.
\end{defn}

We prove the following result:

\begin{thm}[see Theorem \ref{gromov-limit}]
    Let $G$ be a countable discrete group. Suppose $(\Gamma_n, v_n, \alpha_n) \to (\Gamma, v, \alpha)$ in the Gromov-Hausdorff topology and each $\alpha_n: G \curvearrowright \Gamma_n$ is a transitive sofic action. Then $\alpha$ is sofic.
\end{thm}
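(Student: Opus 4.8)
The plan is to reduce the statement to the definition of soficity by an $\epsilon/3$-style argument, using a single transitive sofic action $\alpha_n$ far enough along the convergent sequence as a ``model'' for $\alpha$. Fix a finite set $F \subset G$, a finite set $W \subset V$, and $\epsilon > 0$; we must produce a finite set $A$ and a map $\varphi \colon G \to \Sym(A)$ that is unital, $(F,\epsilon)$-multiplicative, and an $(F, W, \epsilon)$-orbit approximation of $\alpha$. Since $\alpha$ is transitive, every $w \in W$ can be written as $w = \alpha(g_w) v$ for some $g_w \in G$; enlarging $F$, we may assume all the $g_w$ and their inverses lie in $F$, and we let $F' = F \cup F^{-1} \cup \{g_w g_{w'}^{-1}, g_w^{-1} g_{w'} : w, w' \in W\} \cup \{e\}$, still finite. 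The key point is that the Gromov-Hausdorff convergence is \emph{exactly} engineered so that, for $n$ large, the finite ``pattern'' of stabilizer-membership and adjacency of the translates $\alpha(g)v$, $g \in F'$, agrees with that of $\alpha_n(g)v_n$. Concretely, using transitivity to identify $V \cong G/\Stab_\alpha(v)$ and $V_n \cong G/\Stab_{\alpha_n}(v_n)$, the convergence gives an $N$ such that for $n \geq N$ the map $\alpha(g)v \mapsto \alpha_n(g)v_n$ (for $g$ ranging over a fixed finite set of representatives large enough to cover $W$ and its $F$-translates inside $W$) is a well-defined graph isomorphism between the induced subgraph of $\Gamma$ on $\{\alpha(g)v : g \in F'\} \supseteq W$ and the corresponding induced subgraph of $\Gamma_n$, intertwining the partial $G$-actions where both are defined.

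Second, fix such an $n \geq N$ and set $W_n := \{\alpha_n(g)v_n : g \in F'\} \subset V_n$, a finite set. Since $\alpha_n$ is a sofic action, applying Definition \ref{main defn}(5) with the finite data $(F, W_n, \epsilon)$ yields a finite set $A$, a map $\varphi \colon G \to \Sym(A)$ that is unital and $(F,\epsilon)$-multiplicative, a finite graph $B$, and a subset $S \subset A$ with $|S| > (1-\epsilon)|A|$, together with graph embeddings $\pi_s \colon (W_n, E_n|_{W_n \times W_n}) \hookrightarrow B$ satisfying the compatibility $\pi_{\varphi(g)s}(x) = \pi_s(\alpha_n(g^{-1})x)$ for $s \in S$, $g \in F$, $x \in W_n$ whenever $\varphi(g)s \in S$ and $\alpha_n(g^{-1})x \in W_n$.

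Third, I transport this back to $\Gamma$. Let $\iota \colon (W, E|_{W \times W}) \hookrightarrow (W_n, E_n|_{W_n \times W_n})$ be the graph isomorphism onto its image furnished by Step 1, i.e. $\iota(\alpha(g)v) = \alpha_n(g)v_n$; by Step 1 this is a graph embedding, and it satisfies the partial-equivariance $\iota(\alpha(g^{-1})w) = \alpha_n(g^{-1})\iota(w)$ whenever $w \in W$, $g \in F$, and $\alpha(g^{-1})w \in W$ — here one checks using $F'$ that $\alpha_n(g^{-1})\iota(w) \in W_n$ automatically, since $\alpha(g^{-1})w$ is itself a translate $\alpha(g'')v$ with $g'' \in F'$. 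Now define $\pi'_s := \pi_s \circ \iota \colon (W, E|_{W\times W}) \hookrightarrow B$ for $s \in S$. Each $\pi'_s$ is a composition of graph embeddings, hence a graph embedding; and for $s \in S$, $g \in F$, $w \in W$ with $\varphi(g)s \in S$ and $\alpha(g^{-1})w \in W$ we compute
\[
\pi'_{\varphi(g)s}(w) = \pi_{\varphi(g)s}(\iota(w)) = \pi_s\bigl(\alpha_n(g^{-1})\iota(w)\bigr) = \pi_s\bigl(\iota(\alpha(g^{-1})w)\bigr) = \pi'_s(\alpha(g^{-1})w),
\]
which is exactly the orbit-approximation condition. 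Thus $(A, \varphi, B, S, \{\pi'_s\})$ witnesses that $\varphi$ is an $(F, W, \epsilon)$-orbit approximation of $\alpha$, and since $\varphi$ was already unital and $(F,\epsilon)$-multiplicative from Step 2, the action $\alpha$ is sofic.

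The main obstacle is Step 1: making precise and correct the claim that Gromov-Hausdorff convergence transfers the needed finite combinatorial pattern. The subtlety is bookkeeping — one must choose the auxiliary finite set $F'$ large enough that (a) it surjects onto $W$ under $g \mapsto \alpha(g)v$, (b) it is closed under the partial $F$-action in the sense that $\alpha(g^{-1})\alpha(g')v \in \{\alpha(g'')v : g'' \in F'\}$ for all $g \in F$, $g' \in F'$ with the product landing in $W$, and (c) the induced well-defined-ness of $\alpha(g)v \mapsto \alpha_n(g)v_n$ holds, which requires the convergence to control not just adjacency but also equality of translates, i.e. the stabilizer condition $g \in \Stab_\alpha(v) \iff g \in \Stab_{\alpha_n}(v_n)$ applied to ratios $g_1^{-1} g_2$ for $g_1, g_2 \in F'$. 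All three are arranged by enlarging $F'$ to a large enough finite set before invoking convergence; once $F'$ is fixed, the convergence supplies a single $N$ working for all these finitely many conditions simultaneously. The remaining steps are then bookkeeping with the definitions, with no analytic content beyond what is already packaged in the soficity of the $\alpha_n$.
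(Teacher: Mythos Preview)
Your proposal is correct and follows essentially the same route as the paper's proof: fix a section $w \mapsto g_w$ (the paper writes $\sigma$), use Gromov--Hausdorff convergence on a suitable finite set $F'$ to make $\psi(w)=\alpha_n(g_w)v_n$ a graph isomorphism onto its image intertwining the partial $F$-action, apply soficity of $\alpha_n$ to $(F,W_n,\epsilon)$, and pull the resulting embeddings back along $\psi$. The only difference is bookkeeping: the paper takes exactly $F'=\sigma(W)^{-1}\sigma(W)\cup\{\sigma(w)^{-1}g\,\sigma(\alpha(g^{-1})w):g\in F,\,w\in W\}$, which already contains the elements $g_{w'}^{-1}g^{-1}g_w$ (with $w'=\alpha(g^{-1})w$) needed for your partial-equivariance step, whereas your initial $F'$ does not---but you correctly flag this and note that one simply enlarges $F'$ before invoking convergence.
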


The above result is a generalization of Proposition 5 of \cite{gao2024actionslerfgroupssets}. Indeed, recall that for subgroups $(H_n)_{n \in \mathbb{N}}$, $H$ of a countable discrete group $G$, $H_n \to H$ in the Chabauty topology iff $1_{H_i} \to 1_H$ pointwise, as functions from $G$ to $\{0, 1\}$. As another consequence of Theorem above, we also obtain the following: 

\begin{cor}[see Corollary \ref{gromov cor}]
    Let $G$ be a countable discrete group. Let $\cH$ be the class of all $H \leq G$ s.t. every transitive action of $G$ on a graph with the stabilizer of some vertex being $H$ is sofic. Then $\cH$ is closed under taking increasing unions.
\end{cor}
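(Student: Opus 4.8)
The plan is to realize any transitive action with vertex stabilizer $H = \bigcup_n H_n$ as a Gromov--Hausdorff limit of transitive sofic actions whose vertex stabilizers are the $H_n$, and then invoke Theorem \ref{gromov-limit}. I would first reduce to the case of an increasing sequence $H_1 \leq H_2 \leq \cdots$ of members of $\cH$: since $G$ is countable, so is $H = \bigcup_i H_i$; enumerating $H = \{h_1, h_2, \dots\}$ and using that the given family is directed, one extracts indices $i_1 \leq i_2 \leq \cdots$ with $\{h_1, \dots, h_k\} \subset H_{i_k}$, so $H = \bigcup_k H_{i_k}$ is a countable increasing union of elements of $\cH$.

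So let $(H_n)$ be an increasing sequence in $\cH$ with union $H$, and let $\alpha \colon G \curvearrowright \Gamma = (V, E)$ be an arbitrary transitive action with $\Stab_\alpha(v_0) = H$ for some $v_0 \in V$; we must show $\alpha$ is sofic. Using transitivity, identify the $G$-set $V$ with $G/H$ via $v_0 \mapsto eH$. Then $E$, being a $G$-invariant symmetric loop-free relation on $G/H$, is encoded by the set $D = \{g \in G : (eH, gH) \in E\}$, which satisfies $D = D^{-1}$, $D = HDH$, and $D \cap H = \varnothing$ (the last because $E$ has no self-loops). For each $n$ I would then define a graph $\Gamma_n = (G/H_n, E_n)$ by declaring $(aH_n, bH_n) \in E_n$ iff $a^{-1}b \in D$, take base point $v_n = eH_n$, and let $\alpha_n \colon G \curvearrowright \Gamma_n$ be the left-translation action. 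That $E_n$ is well defined and gives a legitimate graph follows from $H_n D H_n \subseteq HDH = D$ (well-definedness), $D = D^{-1}$ (symmetry), and $D \cap H_n \subseteq D \cap H = \varnothing$ (no self-loops).

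By construction each $\alpha_n$ is a transitive action of $G$ on the countable graph $\Gamma_n$ with $\Stab_{\alpha_n}(v_n) = H_n$, so $\alpha_n$ is sofic because $H_n \in \cH$. It remains to verify $(\Gamma_n, v_n, \alpha_n) \to (\Gamma, v_0, \alpha)$ in the Gromov--Hausdorff topology of Definition \ref{g-h-top-defn}. Given a finite $F \subset G$, choose $N$ with $F \cap H \subseteq H_n$ for all $n \geq N$ (possible since $(H_n)$ increases to $H$ and $F$ is finite). For $g \in F$ and $n \geq N$: if $g \in H$ then $g \in H_n$, and if $g \notin H$ then $g \notin H_n$ as $H_n \leq H$; hence $g \in \Stab_{\alpha_n}(v_n) \iff g \in \Stab_\alpha(v_0)$. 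Moreover $v_n$ and $\alpha_n(g)v_n$ are adjacent in $\Gamma_n$ iff $g \in D$ iff $v_0$ and $\alpha(g)v_0$ are adjacent in $\Gamma$, for every $n$. Thus the convergence holds, and Theorem \ref{gromov-limit} yields that $\alpha$ is sofic; since $\alpha$ was arbitrary, $H \in \cH$.

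The argument is mechanical once Theorem \ref{gromov-limit} is granted; the only point needing care is passing from the abstract triple $(\Gamma, v_0, \alpha)$ to the concrete coset model $(G/H, D)$ and observing that the same subset $D$ descends to a valid graph structure on each $G/H_n$, which is exactly where the identities $D = HDH$ and $H_n \leq H$ are used. I do not expect any genuine obstacle beyond this bookkeeping.
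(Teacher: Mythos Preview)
Your proof is correct and follows essentially the same approach as the paper: encode $\Gamma$ by its characteristic set of edges $D$ (the paper calls it $S$), push the same $D$ down to each $G/H_n$ using $H_n \leq H$, and invoke Theorem~\ref{gromov-limit}. The only cosmetic differences are that the paper packages the convergence check via Proposition~\ref{gromov-characterization} rather than arguing directly from Definition~\ref{g-h-top-defn}, and your preliminary reduction from a directed family to an increasing sequence is extra (the paper already starts from a sequence).
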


\subsection*{Graph wreath products}

One can construct a natural family of groups arising from group actions on graphs, which we call graph wreath products. The reason for the terminology is because this simultaneously generalizes graph products and generalized wreath products. First recall from  See \cite{Gr90}: 
    Let $H$ be a group and $\Gamma=(V,E)$ a graph. For each $v\in V$ denote by $H_v$ an isomorphic copy of $H$. The graph product $*^\Gamma H$ is the group with presentation $$\langle H_v : v\in V \mid [h_v,h_w] = e \text{ if } h_v\in H_v, h_w\in H_w, \text{ and } (v,w)\in E \rangle.$$

    Let $G\curvearrowright\Gamma$ be an action. Then any graph product $*^\Gamma H$ is acted on by $G$ via automorphisms that permute the vertices. The graph wreath product is then the semi-direct product $*^\Gamma H \rtimes G$.

    The graph product of a tracial von Neumann algebra $M$ over a graph $\Gamma$ is defined in \cite{CaFi17}; it also admits an action of $G$ by automorphisms which permute the vertices, and we can again form the graph wreath product $*^\Gamma M \rtimes G.$ If $H$ is a sofic group (resp. if $M$ is Connes-embeddable) then $*^\Gamma H$ (resp. $*^\Gamma M$) is sofic \cite{CHR} (resp. Connes-embeddable \cite{caspers2015connes,charlesworth2021matrix}). 

We prove the following result, which extends several prior works including \cite{HayesSale, gao2024soficity, CHR, charlesworth2021matrix}:

\begin{thm}
    Fix a sofic group $H.$ Consider a sofic action of a sofic group $G\curvearrowright^\alpha\Gamma = (V,E).$ Then the graph wreath product $*^\Gamma H \rtimes G$ is sofic. Analogously, fix a Connes-embeddable tracial von Neumann algebra $M.$ Consider a sofic action of a hyperlinear group $G\curvearrowright\Gamma = (V,E).$ Then the graph wreath product $*^\Gamma M \rtimes G$ is Connes-embeddable.

\end{thm}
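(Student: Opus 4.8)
The plan is to verify the finitary Definition \ref{main defn} directly by constructing the required almost-homomorphism; the argument is cleanest in the ultraproduct picture of Section \ref{ultra def}, but I will describe it finitarily. Fix a finite $F \subseteq *^\Gamma H \rtimes G$ and $\eps > 0$. Since $F$ involves only finitely many vertices, I would first pass to a finite $W \subseteq V$ and a finite symmetric $F_G \subseteq G$ containing $1_G$ so that every element of $F$ has the form $(w,g)$ with $g \in F_G$ and $\supp(w) \subseteq W_0 \subseteq W$, arranging moreover that $\alpha(g)(W_0) \subseteq W$ for all $g \in F_G$ (so that products of elements of $F$ remain ``visible'' inside $W$); let $F_H \subseteq H$ collect the finitely many elements of $H$ occurring in those words. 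The key point is that the finite graph $B$ produced by an orbit approximation of $\alpha$ can serve as a finite ``ambient graph'': I would take a sofic approximation $\psi\colon *^B H \to \Sym(D)$ of the graph product over the \emph{finite} graph $B$ (which is sofic because $H$ is sofic, by \cite{CHR}) together with $\varphi\colon G \to \Sym(A)$ chosen to be simultaneously an honest sofic approximation of $G$ and an $(F_G,W,\delta)$-orbit approximation of $\alpha$; one obtains such a $\varphi$ by taking the ``product'' $\varphi'(g)\times\varphi''(g)$ on $A=A'\times A''$ of an orbit approximation $\varphi'$ of $\alpha$ and a sofic approximation $\varphi''$ of $G$, which is again an orbit approximation via $\pi_{(s',a'')}:=\pi_{s'}$ and $S:=S'\times A''$.

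With the resulting data --- the finite graph $B$, a subset $S\subseteq A$ with $|S|>(1-\delta)|A|$, and graph embeddings $\pi_s\colon (W,E\mid_{W\times W})\hookrightarrow B$ for $s\in S$ satisfying $\pi_{\varphi(g)s}=\pi_s\circ\alpha(g^{-1})$ whenever both sides make sense --- I would set $X:=D\times A$ and define $\Phi\colon *^\Gamma H\rtimes G\to\Sym(X)$ as follows. Each $\pi_s$, being an isomorphism onto a full subgraph of $B$, induces an \emph{injective} homomorphism $\iota_s\colon *^W H\hookrightarrow *^B H$, $h_v\mapsto h_{\pi_s(v)}$ --- injectivity being the standard fact that a full-subgraph inclusion of graph products is a retract. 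For $w\in *^\Gamma H$ with $\supp(w)\subseteq W$ put $\Phi(w)(d,a)=(\psi(\iota_a(w))(d),a)$ if $a\in S$ and $\Phi(w)(d,a)=(d,a)$ otherwise; for $g\in G$ put $\Phi(g)(d,a)=(d,\varphi(g)a)$; and set $\Phi((w,g)):=\Phi(w)\Phi(g)$, defining this on $F$ and on products of pairs from $F$ and extending arbitrarily. Note that on the fibre over $a\in S$ the permutation $\Phi(w)$ acts by $\psi(\iota_a(w))$, so, since $\pi_a$ carries edges of $(W,E\mid_{W\times W})$ to edges of $B$, commuting generators of $*^W H$ are sent to commuting elements of $*^B H$ and the graph-product relations are respected fibrewise.

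For the verification I would check: unitality is immediate; approximate multiplicativity of $\Phi\mid_{*^\Gamma H}$ holds because each $\iota_a$ is a homomorphism and $\psi$ is multiplicative on the finitely many elements $\iota_a(w)$ and their products that arise; the semidirect-product twist reduces to the pointwise identity $\iota_{\varphi(g)a}(\alpha(g)(h_v))=h_{\pi_{\varphi(g)a}(\alpha(g)v)}=h_{\pi_a(v)}=\iota_a(h_v)$, the middle equality being exactly the equivariance $\pi_{\varphi(g)a}\circ\alpha(g)=\pi_a$ of the orbit approximation; and for approximate freeness, if $(w,g)\in F$ with $g\neq 1_G$ then $\Phi((w,g))$ changes the $A$-coordinate from $a$ to $\varphi(g)a\neq a$ on a $(1-\delta)$-fraction, since $\Phi(w)$ leaves $A$ untouched and $d(1,\varphi(g))>1-\delta$, while if $g=1_G$ and $w\neq 1$ then $w$ is a nontrivial element of $*^{W_0}H\leq *^W H$, so $\iota_a(w)\neq 1$ in $*^B H$ for every $a\in S$ by injectivity of $\iota_a$, and $d(1,\psi(\iota_a(w)))>1-\delta$ forces $\Phi((w,g))(d,a)\neq(d,a)$ on a $(1-2\delta)$-fraction. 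Taking $\delta<\eps/3$ (with all the relevant $\iota_a(w)$ included in the finite set on which $\psi$ is required to be multiplicative and free) yields the desired approximation. The von Neumann algebra statement is proved by the same construction, replacing $\Sym(D)$ by a matrix algebra into which $*^B M$ approximately embeds (Connes-embeddability of graph products over finite graphs, \cite{caspers2015connes,charlesworth2021matrix}), replacing the sofic approximation of $G$ by a hyperlinear one, and replacing $X$ by the corresponding tensor product of finite-dimensional algebras equipped with the product of the normalized traces.

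The step I expect to be the main obstacle is keeping the approximation parameters coherent: the errors from $\psi$ not being a homomorphism and from $\varphi$ not being multiplicative propagate through the definition of $\Phi$ and through the identity $\iota_a(w)=\iota_{\varphi(g)a}(\alpha(g)(w))$, and one must fix $W$ --- hence $B$, hence the finite subset of $*^B H$ on which $\psi$ is required to be accurate --- before choosing $\delta$. In the ultraproduct formulation this all collapses to the single statement that the homomorphisms $\iota_s$ intertwine the $G$-action on $*^\Gamma H$ with its action on $\prod_{k\to\omega}*^{B_k}H$, so I would ultimately present the argument there. The only genuinely external input is soficity (resp.\ Connes-embeddability) of graph products over \emph{finite} graphs, which is available from \cite{CHR} (resp.\ \cite{caspers2015connes,charlesworth2021matrix}).
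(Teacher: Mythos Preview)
Your approach is correct and shares the paper's core idea: push $*^W H$ into $*^B H$ via the graph embeddings $\pi_s$ supplied by the orbit approximation, then exploit soficity of the graph product over the \emph{finite} graph $B$. The equivariance check $\iota_{\varphi(g)a}(\beta(g)w)=\iota_a(w)$ is exactly the relation $\pi_{\varphi(g)s}\circ\alpha(g)=\pi_s$ in Definition~\ref{main defn}, and your injectivity-of-$\iota_s$ step is what the paper encodes by insisting that $\pi_s$ be a graph \emph{embedding} (so that $q^i_s$ is a genuine homomorphism, and in fact a split monomorphism).

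The packaging differs in two places. First, the paper does not build a map to $\Sym(D\times A)$; instead it maps $*^\Gamma H\rtimes G$ into the metric ultraproduct $\prod_\cU \big((*^{B_i}H)^{\oplus A_i}\rtimes\Sym(A_i),\,d_{*^{B_i}H,A_i}\big)$ and then invokes a black-box lemma (Corollary~3.5 of \cite{gao2024soficity}) asserting that such ultraproducts are sofic. This sidesteps the bookkeeping you flag as the ``main obstacle'': once you are in the ultraproduct, $\psi$ never has to be chosen at finite accuracy, and the order-of-quantifiers issue (fix $B$, \emph{then} name the finitely many $\iota_a(w)$, \emph{then} pick $\psi$) disappears. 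Second, for freeness when $g\neq 1_G$ the paper does not amplify $\varphi$ by a separate sofic approximation of $G$; it instead passes to the injection $hg\mapsto (hg\ker\rho,\;g)$ into $\big((*^\Gamma H\rtimes G)/\ker\rho\big)\times G$ and uses soficity of $G$ as a direct factor. Your product trick $\varphi=\varphi'\times\varphi''$ achieves the same effect more elementarily, at the cost of carrying the extra factor through all the estimates. Either route works; the paper's is more modular (and transfers verbatim to the Connes-embeddable statement via tensor products and $\M_{|A_i|}(\C)$), while yours is self-contained and avoids citing the wreath-product lemma.

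One small correction to your last paragraph: $\delta$ should be fixed \emph{before} $B$, not after. The dependency chain is $\epsilon,|F|,|W|\rightsquigarrow\delta\rightsquigarrow\varphi$ (hence $B$ and the $\pi_a$) $\rightsquigarrow$ the finite set $\{\iota_a(w):a\in S,\;w\text{ from }F\}\subset *^B H$ $\rightsquigarrow\psi$. Since the number of distinct $\iota_a$'s is bounded by the number of graph embeddings $W\hookrightarrow B$, this finite set is genuinely finite and the chain closes.
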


\textbf{Acknowledgements:} It is our great pleasure to thank Brandon Seward for their generosity and key insightful suggestion to consider Følner tilings. We also thank Henry Wilton for pointing us in the direction of \cite{hrushovski1992extending}. We thank Ben Hayes for helpful suggestions, constant encouragement and support.

\section{Main results}


\subsection{Ultraproduct framework}\label{ultra def} 
We obtain below an ultraproduct definition for sofic actions and discuss the relation with the finitary definition.

\begin{defn}
Let $(X_n)$ be a sequence of sets, $\mathcal{U}$ be a free ultrafilter on $\mathbb{N}$. Then $\prod_\mathcal{U} X_n$, the \textit{algebraic ultraproduct} of $(X_n)$, is defined as $\prod_\mathcal{U} X_n = \prod X_n/\sim$ where $f \sim g$ iff $\{n: f(n) = g(n)\} \in \mathcal{U}$. We shall write $(x_n)_\mathcal{U}$ to mean the element of $\prod_\mathcal{U} X_n$ represented by $(x_n) \in \prod X_n$. If $(A_n \subset X_n)$ is a sequence of subsets, then we shall write $(A_n)_\mathcal{U}$ to mean,
\begin{equation*}
    (A_n)_\mathcal{U} = \{(x_n)_\mathcal{U} \in \prod_\mathcal{U} X_n: \{n: x_n \in A_n\} \in \mathcal{U}\}
\end{equation*}
We may similarly define ultraproducts of graphs. If $(\Gamma_n = (V_n,E_n))$ is a sequence of graphs, then $\prod_\cU\Gamma_n$ has vertex set $\prod_\cU V_n$ and $((v_n),(w_n))$ define an edge if $\{n: (v_n,w_n)\in E_n\}\in \cU.$
\end{defn}

\begin{defn}
Let $(X_n, d_n)$ be a sequence of metric spaces, $\mathcal{U}$ be a free ultrafilter on $\mathbb{N}$. Then $\prod_\mathcal{U} (X_n, d_n)$, the \textit{metric ultraproduct} of $(X_n, d_n)$, is defined to have the underlying set $\prod X_n/\sim$ where $f \sim g$ iff $\lim_\mathcal{U} d_n(f(n), g(n)) = 0$. We shall write $(x_n)_\mathcal{U}$ to mean the element of $\prod_\mathcal{U} X_n$ represented by $(x_n) \in \prod X_n$. Then the metric on this space is defined by $d_\mathcal{U}((x_n)_\mathcal{U}, (y_n)_\mathcal{U}) = \lim_\mathcal{U} d_n(x_n, y_n)$.
\end{defn}

\begin{defn}
    Let $\Gamma_u$ be a countable graph that contains a copy of every finite graph. For example, we may take $\Gamma_u=(V_u,E_u)$ to be the disjoint union of all finite graphs, or to be the random graph on $\aleph_0$ vertices. Let $\cU$ be a free ultrafilter on $\N.$ For each $n,$ let $V_n$ be the collection of all maps $[n]\to V_u$ and $d_n$ the normalized Hamming distance on $V_n.$ Let $\V_\cU = \prod_\cU (V_n,d_n)$. We endow each $V_n$ with a second metric $\rho_n$ defined as follows: 
    $$\rho_n(f,g) = \frac{1}{n}\sum_{s\in[n]} \rho(f(s),g(s)),$$ where $$\rho(v,w) = \begin{cases}
        0 & \text{ if } v=w,\\
        1/2 &\text{ if } (v,w)\in E_u, \\
        1 & \text{ if } v\neq w \text{ and } (v,w)\not\in E_u.
    \end{cases}$$
    Then $\V_\cU$ has two metrics: $d_\cU = (d_n)_\cU$ and $\rho_\cU = (\rho_n)_\cU.$ We remark that if $d_\cU(f,g) = 1$ then $\rho_\cU(f,g) = 1/2$ if and only if 
    $$\lim_{n\to \cU}\frac{|\{s\in[n] : (f_n(s),g_n(s))\in E_u\}|}{n} = 1$$ and $\rho_\cU(f,g) = 1$ if and only if
    $$\lim_{n\to \cU}\frac{|\{s\in[n] : (f_n(s),g_n(s))\in E_u\}|}{n} = 0.$$
    
    Let $\E_\cU \subset \V_\cU\times\V_\cU$ be given by $((f_n),(g_n))\in \E_\cU$ if and only if $d_\cU((f_n),(g_n)) = 1$ and $\rho_\cU((f_n),(g_n)) = 1/2.$ We define the universal sofic graph to be $\X = (\V_\cU,\E_\cU).$ 
\end{defn}

\begin{defn}
    We define the following action $\bS_\cU$ of the universal sofic group $\prod_\cU(S_n,d)$ on the universal sofic graph by precomposition: for $g_n\in S_n$ and $f_n\in V_n,$ $\bS_\cU((g_n))((f_n)) = (f_n\circ g_n^{-1})$.
\end{defn}

\begin{rem}
    We may, as in \cite{gao2024soficity}, replace $\V$ with the Loeb measure space. In that case we would require defining liftable maps.
\end{rem}

Now we present the natural ultraproduct definition of sofic actions. 

\begin{prop}\label{ultra-prod}
Let $G$ be a countable discrete group, $\Gamma=(V,E)$ be a countable graph with the metrics $d_0$ and $\rho_0$, where $d_0$ is discrete and $$\rho_0(v,w) = \begin{cases}
        0 & \text{ if } v=w,\\
        1/2 &\text{ if } (v,w)\in E_u, \\
        1 & \text{ if } v\neq w \text{ and } (v,w)\not\in E_u,
    \end{cases}$$ and let $\alpha: G \curvearrowright \Gamma$ be an action. The following are equivalent:
\begin{enumerate}
    \item $\alpha$ is sofic;
    \item There exists a free ultrafilter $\mathcal{U}$ on $\mathbb{N}$, a group homomorphism $\varphi: G \rightarrow \prod_\mathcal{U} (S_n, d)$, and a map $\pi: \Gamma \rightarrow \mathbb{X}_\mathcal{U}$ which is $d_0$-$d_\cU$ isometric, $\rho_0$-$\rho_\cU$ isometric, and such that $\mathbb{S}_\mathcal{U}(\varphi(g))(\pi(v)) = \pi(\alpha(g)v)$ for all $v \in V$ and $g \in G$.
\end{enumerate}
\end{prop}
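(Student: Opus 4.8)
The plan is to prove the equivalence by a standard ultraproduct-of-finite-data argument, with one direction extracting finitary witnesses from the ultraproduct diagram and the other direction assembling the ultraproduct objects from a sequence of finitary orbit approximations. First, for the implication $(2) \Rightarrow (1)$: fix finite $F \subset G$, $W \subset V$, and $\epsilon > 0$. Given the homomorphism $\varphi$ and the map $\pi$, choose representatives: for each $g$ in the finite set $F \cup F^2 \cup \{1_G\}$ write $\varphi(g) = (\varphi_n(g))_\cU$ with $\varphi_n(g) \in S_n$, and for each $v \in W$ write $\pi(v) = (\pi_n(v))_\cU$ with $\pi_n(v): [n] \to V_u$. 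The hypotheses — $\varphi$ a homomorphism, $\pi$ both $d_0$-$d_\cU$ and $\rho_0$-$\rho_\cU$ isometric, and $\bS_\cU(\varphi(g))(\pi(v)) = \pi(\alpha(g)v)$ — are each a finite conjunction of $\lim_\cU$-statements, so on a $\cU$-large set of $n$ they hold approximately: $d(\varphi_n(gh), \varphi_n(g)\varphi_n(h)) < \epsilon$, $\varphi_n(1_G) = 1$ (we may correct this on a small set at no cost), and the fraction of $s \in [n]$ for which $\pi_n(\alpha(g)v)(s) = (\pi_n(v) \circ \varphi_n(g)^{-1})(s)$ is at least $1 - \epsilon$ for all relevant $g, v$. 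Pick one such $n$. Set $A = [n]$, $\varphi = \varphi_n$, let $B$ be a finite subgraph of $\Gamma_u$ containing the images of all $\pi_n(v)$, $v \in W$, and let $S \subset [n]$ be the set of indices on which all the (finitely many) approximate equalities hold simultaneously; by a union bound, after shrinking $\epsilon$ appropriately at the start, $|S| > (1-\epsilon)|A|$. Defining $\pi_s(v) = \pi_n(v)(s)$ gives graph embeddings $(W, E|_{W \times W}) \hookrightarrow B$ (injectivity on $W$ and edge-preservation follow from the $d_0$- and $\rho_0$-isometry of $\pi$, read off at coordinate $s \in S$), and the intertwining relation $\pi_{\varphi(g)s}(v) = \pi_s(\alpha(g^{-1})v)$ holds whenever $\varphi(g)s \in S$ and $\alpha(g^{-1})v \in W$, which is exactly the orbit-approximation condition. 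Thus $\alpha$ is sofic.

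For $(1) \Rightarrow (2)$: enumerate $G = \{g_1, g_2, \dots\}$ and $V = \{v_1, v_2, \dots\}$, and for each $k$ apply soficity with $F = F_k = \{g_1, \dots, g_k\}$ (closed under inverse), $W = W_k = \{v_1, \dots, v_k\}$, $\epsilon = 1/k$ to get a finite set $A_k$, a unital, $(F_k, 1/k)$-multiplicative map $\varphi^{(k)}: G \to \Sym(A_k)$ which is an $(F_k, W_k, 1/k)$-orbit approximation of $\alpha$, with witnessing finite graph $B_k$, subset $S_k \subset A_k$, and embeddings $\pi_s^{(k)}$. Fix a free ultrafilter $\cU$ on $\N$. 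We want to package the $A_k$ as the index sets $[n]$ of the universal sofic construction; since $A_k$ is an arbitrary finite set of some size $n_k$, identify $A_k$ with $[n_k]$ and pass to the ultrafilter along $k \mapsto n_k$ (or, more cleanly, re-index so that the $k$-th datum is recorded at position $n_k$ and arrange a $\cU$-cofinal set of such positions — this is a routine bookkeeping step). Embed each $B_k$ into $\Gamma_u$. Define $\varphi: G \to \prod_\cU(S_n, d)$ by $\varphi(g) = (\varphi^{(k)}(g))_\cU$; asymptotic multiplicativity forces this to be a homomorphism. Define $\pi: \Gamma \to \X$ by sending $v \in V$ to the class of the map $[n_k] \to V_u$ that agrees with $s \mapsto \pi_s^{(k)}(v)$ on $S_k$ (for $k$ large enough that $v \in W_k$) and is defined arbitrarily off $S_k$; since $|S_k| > (1-1/k)|A_k|$, the ambiguity is $\cU$-negligible and $\pi$ is well-defined.

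Finally, one verifies the three properties of $\pi$ in $(2)$. The $d_0$-$d_\cU$ isometry: for $v \neq w$ in $V$, once $k$ is large each $\pi_s^{(k)}$ is injective on $W_k \ni v, w$, so $\pi_s^{(k)}(v) \neq \pi_s^{(k)}(w)$ on all of $S_k$, giving $d_\cU(\pi(v), \pi(w)) = 1 = d_0(v,w)$; and $d_0(v,v) = 0 = d_\cU(\pi(v),\pi(v))$ trivially. The $\rho_0$-$\rho_\cU$ isometry: since each $\pi_s^{(k)}$ is a graph embedding, $(v,w) \in E$ iff $(\pi_s^{(k)}(v), \pi_s^{(k)}(w)) \in E_u$ for $s \in S_k$, so on a $\cU$-large set the fraction of coordinates where the edge relation holds is $1$ when $(v,w) \in E$ and $0$ otherwise, which by the remark in the definition of $\X$ gives $\rho_\cU(\pi(v), \pi(w)) = \rho_0(v,w)$ in all three cases. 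The intertwining $\bS_\cU(\varphi(g))(\pi(v)) = \pi(\alpha(g)v)$: for fixed $g, v$, for all large $k$ we have $g, g^{-1} \in F_k$ and $v, \alpha(g)v \in W_k$, and the orbit-approximation identity $\pi_{\varphi^{(k)}(g)s}(v) = \pi_s(\alpha(g^{-1})v)$ — applied with $\alpha(g)v$ in place of $v$, i.e. $\pi_{\varphi^{(k)}(g)s}(\alpha(g)v) = \pi_s(v)$ — shows that $\pi^{(k)}(\alpha(g)v)$ and $\pi^{(k)}(v) \circ (\varphi^{(k)}(g))^{-1}$ agree on the set $\{s : s, \varphi^{(k)}(g)^{-1}s \in S_k\}$, which is $\cU$-conull, so the two sides are equal in $\X$; this is precisely $\bS_\cU(\varphi(g))(\pi(v)) = \pi(\alpha(g)v)$. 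The main obstacle, and the only genuinely delicate point, is the indexing issue: matching the variable finite sets $A_k$ produced by soficity against the rigidly-indexed family $(V_n = V_u^{[n]})_{n \in \N}$ underlying $\X$, and ensuring all the "for $\cU$-large $n$" arguments survive the re-indexing — this is the same technical maneuver used in the corresponding proof in \cite{gao2024soficity}, and we will handle it the same way, choosing $\cU$ to concentrate on the sizes $n_k$ actually realized.
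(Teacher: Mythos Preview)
Your proposal is correct and follows essentially the same approach as the paper: both directions are the standard extraction/assembly argument, with the $(1)\Rightarrow(2)$ direction handled by choosing the ultrafilter to concentrate on the realized sizes $|A_k|$ (after arranging them to be distinct), and the $(2)\Rightarrow(1)$ direction handled by reading off the finitary data at a single good index $n$ via a union bound on the finitely many $\lim_\cU$-constraints. The paper's write-up is slightly more explicit about the quantitative bookkeeping (the choice of $\epsilon'$ with $1 - 4|W|^2\epsilon' - |F||W|\epsilon' \geq 1-\epsilon$, and the precise definition of the good set $\tilde S_n$), and in the equivariance check it applies the orbit-approximation identity with $g^{-1}$ in place of $g$ rather than with $\alpha(g)v$ in place of $v$ as you do, but these are cosmetic differences.
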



\begin{proof}
    $(\Rightarrow)$ Fix increasing sequences of finite subsets $F_i \nearrow G$ and $W_i \nearrow V$. Fix a decreasing sequence $\epsilon_i \searrow 0$. For each $i$, let $\varphi_i: G \rightarrow \Sym(A_i)$ be a unital, $(F_i, \epsilon_i)$-multiplicative, and an $(F_i, W_i, \epsilon_i)$-orbit approximation of $\alpha$. By taking the Cartesian products of $A_i$ with auxiliary finite sets if necessary, we may assume $|A_i|$ is strictly increasing. By definition of $(F_i, W_i, \epsilon_i)$-orbit approximation, there exists a finite graph $B_i$ and a subset $S_i \subset A_i$ such that $|S_i| > (1 - \epsilon_i)|A_i|$ and for each $s \in S_i$ there is a graph embedding $\pi^i_s: E_i \hookrightarrow B_i$ such that $\pi^i_{\varphi_i(g)s}(v) = \pi^i_s(\alpha(g^{-1})v)$ for all $s \in S_i$, $g \in F_i$, $v \in W_i$, whenever $\varphi_i(g)s \in S_i$ and $\alpha(g^{-1})v \in W_i$. By graph-embedding $B_i$ into a universal graph $\Gamma_u=(V_u,E_u)$ we may assume each $\pi^i_s$ is a graph embedding into $\Gamma_u$. Let $\mathcal{U}$ be any free ultrafilter on $\mathbb{N}$ containing the set $\{|A_i|: i \geq 1\}$. We may then define $\varphi: G \rightarrow \prod_\mathcal{U} (S_n, d)$ by defining $\varphi(g) = (g_n)_\mathcal{U}$ with $g_n = \varphi_i(g)$ whenever $n = |A_i|$ and $g_n = 1$ otherwise. Since $\{|A_i|: i \geq 1\} \in \mathcal{U}$ and $\varphi_i$ is $(F_i, \epsilon_i)$-multiplicative, we see that $\varphi$ is a group homomorphism.

We then define $\pi: V \rightarrow \mathbb{V}_\mathcal{U}$ as follows: For each $v \in V$, $n \in \mathbb{N}$, define $\pi_{v, n}: [n] \rightarrow \mathbb{N}$ by
\begin{equation*}
    \pi_{v, n}(s) = \begin{cases}
        \pi^i_s(v), &\textrm{if }n = |A_i|\textrm{ and }v \in W_i\textrm{ and }s \in S_i\\
        0, &\textrm{otherwise,}
    \end{cases}
\end{equation*}
where we identify $A_i$ with $[|A_i|].$

For each $v \in V$, $\pi(v)$ shall be represented by the sequence of maps $\pi_{v, n}$. We observe that as $\{|A_i|: i \geq 1\} \in \mathcal{U}$, it does not matter how $\pi_{v, n}$ is defined when $n \notin \{|A_i|: i \geq 1\}$. Now, let $v \neq w \in V$, then for large enough $i$ we have $v, w \in W_i$. For $s \in S_i$, we then have $\pi_{v, |A_i|}(s) = \pi^i_s(v) \neq \pi^i_s(w) = \pi_{w, |A_i|}(s)$ as $\pi^i_s$ is a graph embedding, so injective. As $\frac{|S_i|}{|A_i|} > 1 - \epsilon_i \rightarrow 1$, $d_{|A_i|}(\pi_{v, |A_i|}, \pi_{w, |A_i|}) \geq \frac{|S_i|}{|A_i|} \rightarrow 1$, so $d_\mathcal{U}(\pi(v), \pi(w)) = 1$.

We now show that $\pi$ is also $\rho_0$-$\rho_\cU$ isometric. If $v,w\in V,$ then for all $i$ sufficiently large, $v,w\in W_i.$ Then $\pi^i_s(v) = \pi_{v,|A_i|}(s)$ and $\pi^i_s(w) = \pi_{w,|A_i|}(s)$ for all $s\in S_i.$ But $\pi_s^i$ is a graph embedding, so $(v,w)\in E$ if and only if $(\pi_{v,|A_i|}(s),\pi_{w,|A_i|}(s))\in E.$ Hence 
$$\lim_{n\to \cU}\frac{|\{s\in[n] : (\pi_{v,n}(s),\pi_{w,n}(s))\in E_u\}|}{n} = \begin{cases}
    1, & \text{ if } (v,w)\in E\\
    0, & \text{ if } (v,w)\not\in E.
\end{cases} $$
If $\rho_0(v,w) = 1/2,$ then $(v,w) \in E$ and we see from the above equality that $\rho_\cU(\pi(v),\pi(w)) = 1/2.$ Similarly $\rho_0(v,w)=1$ implies $\rho_\cU(\pi(v),\pi(w)) = 1.$

Finally, fix $v \in V$, $g \in G$. Then for large $i$, $v \in W_i$, $g^{-1} \in F_i$, and $\alpha(g)x \in W_i$. Now, for any $s \in S_i \cap \varphi_i(g^{-1})^{-1}S_i$, by definition of $\pi_{v, n}$ and $(F_i, W_i, \epsilon_i)$-orbit approximation we have,
\begin{equation*}
    \pi_{v, |A_i|}(\varphi_i(g^{-1})s) = \pi^i_{\varphi_i(g^{-1})s}(v) = \pi^i_s(\alpha(g)v) = \pi_{\alpha(g)v, |A_i|}(s)
\end{equation*}

Since $\frac{|S_i \cap \varphi_i(g^{-1})^{-1}S_i|}{|A_i|} > 1 - 2\epsilon_i \rightarrow 1$, this means the maps given by $\mathbb{S}_\mathcal{U}(\varphi(g))(\pi(v))$ and by $\pi(\alpha(g)v)$ coincide in the metric ultraproduct $\prod_\cU ([n],d_n)$, whence they are identified in $\mathbb V_\cU.$

$(\Leftarrow)$ For each $g \in G$, we shall write $\varphi(g) = (g_n)_\mathcal{U}$. Since $\varphi(1_G) = 1$, we shall in particular choose $\varphi(1_G) = (1)_\mathcal{U}$. Let $\varphi_n: G \rightarrow S_n$ be defined by $\varphi_n(g) = g_n$. Now, fix finite $F \subset G$, $W \subset X$, and $\epsilon > 0$, we shall show that there exists $n$ such that $\varphi_n$ is unital, $(F, \epsilon)$-multiplicative, and an $(F, W, \epsilon)$-orbit approximation of $\alpha$. We observe that $\varphi_n$ is unital for all $n$. Since $F$ is finite and $\varphi$ is a group homomorphism, there exists $L_1 \in \mathcal{U}$ such that for all $n \in L_1$, $\varphi_n$ is $(F, \epsilon)$-multiplicative.

Now, we observe that, in the definition of $(F, W, \epsilon)$-orbit approximation of $\alpha$, it is not necessary that $B$ is a finite graph, as, for an infinite $B$, we may simply restrict $B$ to the subgraph with vertices $\cup_{s \in S} \pi_s(W)$ and the latter graph is finite. Thus, we may set $B$ to be a universal graph $\Gamma_u$. Choose $\epsilon' > 0$ such that $1 - 4|W|^2\epsilon' - |F||W|\epsilon' \geq 1 - \epsilon$. Now, for each $v \in W$, we represent $\pi(v)$ as a sequence of maps $(\pi_{v, n})_\mathcal{U}$ with $\pi_{v, n}: [n] \rightarrow \Gamma_u = B$. We first observe that, for any $v \neq w \in W$, as $d_\mathcal{U}(\pi(v), \pi(w)) = 1$, there exists $L_{2, v, w} \in \mathcal{U}$ such that $d_n(\pi_{v, n}, \pi_{w, n}) > 1 - \epsilon'$ and $|\rho_n(\pi_{v,n},\pi_{w,n}) - \rho_0(v,w)| < \ee'$ for all $n \in L_{2, v, w}$. Let $L_2 = \cap_{v \neq w \in W} L_{2, v, w}$. Since $W$ is finite, $L_2 \in \mathcal{U}$. For each $n \in L_2$, let
\begin{equation*}
    \tilde{S}_{n,v,w} = \{s \in [n]: \pi_{v, n}(s) \neq \pi_{w, n}(s) \text{ and } (\pi_{v,n}(s),\pi_{w,n}(s))\in E_u \iff (v,w) \in E\}
\end{equation*}

In the case $(v,w)\in E,$ we see that the conditions $d_n(\pi_{v,n},\pi_{w,n}) > 1-\ee'$ and $\rho_n(\pi_{v,n},\pi_{w,n}) > 1/2 -\ee'$ imply that $|\Tilde{S}_{n,v,w}|/n > 1-4\ee'.$ In the case $v\neq w$ and $(v,w)\not\in E$, then the condition $\rho_n(\pi_{v,n},\pi_{w,n}) > 1-\ee'$ implies $|\Tilde{S}_{n,v,w}|/n > 1-2\ee'.$ Therefore, if we define $$\Tilde{S}_n = \bigcap_{v\neq w \in W} \Tilde{S}_{n,v,w},$$
then $\frac{|\tilde{S}_n|}{n} > 1 - 4|W|^2\epsilon'$. For each $s \in \tilde{S}_n$, define $\pi^n_s: W \rightarrow B$ by $\pi^n_s(v) = \pi_{v, n}(s)$. Then $\pi^n_s$ is a graph embedding for all $s \in \tilde{S}_n$.

Now, fix any $g \in F$, $v \in W$ with $\alpha(g^{-1})v \in W$. Recall that $\mathbb{S}_\mathcal{U}(\varphi(g^{-1}))(\pi(v)) = \pi(\alpha(g^{-1})v)$. $\mathbb{S}_\mathcal{U}(\varphi(g^{-1}))(\pi(v))$ is represented by the sequence of maps $\pi_{v, n} \circ \varphi_n(g)$ while $\pi(\alpha(g^{-1})v)$ is represented by the sequence of maps $\pi_{\alpha(g^{-1})v, n}$. Thus, there exists $L_{3, g, v} \in \mathcal{U}$ such that $d_n(\pi_{v, n} \circ \varphi_n(g), \pi_{\alpha(g^{-1})v, n}) < \epsilon'$ for all $n \in L_{3, g, v}$. Let $L_3 = \cap_{g \in F, v \in W \cap \alpha(g)W} L_{3, g, v}$. Again, as $F$ and $W$ are finite, $L_3 \in \mathcal{U}$. For any $n \in L_2 \cap L_3$, define,
\begin{equation*}
    S_n = \tilde{S}_n \cap \bigcap_{g \in F, v \in W \cap \alpha(g)W} \{s \in [n]: [\pi_{v, n} \circ \varphi_n(g)](s) = \pi_{\alpha(g^{-1})v, n}(s)\}
\end{equation*}

Then $\frac{|S_n|}{n} > 1 - 4|W|^2\epsilon' - |F||W|\epsilon' \geq 1 - \epsilon$. By definition, $\pi^n_{\varphi_n(g)s}(x) = \pi^n_s(\alpha(g^{-1})x)$ for all $s \in S_n$, $g \in F$, $x \in W$, whenever $\varphi_n(g)s \in S_n$ and $\alpha(g^{-1})x \in W$. This shows that for all $n \in L_1 \cap L_2 \cap L_3 \neq \varnothing$, $\varphi_n$ is unital, $(F, \epsilon)$-multiplicative, and an $(F, W, \epsilon)$-orbit approximation of $\alpha$.
\end{proof}


\subsection{Properties of Sofic Actions on Graphs}

In this subsection we state some basic results about sofic actions on graphs, prove soficity for products and coproducts of sofic actions on graphs, prove that sofic actions on graphs imply the acting group has a large quotient which is sofic, and that the action of any amenable group on a graph is sofic. 


\begin{prop}\label{permanence_simple}
    Let $\alpha:G\curvearrowright \Gamma=(V,E)$ be an action.
    \begin{enumerate}
        \item If $\pi:H\to G$ is a group homomorphism and $\alpha$ is sofic, then $\alpha\circ \pi:H\to \Aut(\Gamma)$ is a sofic action of $H$ on $\Gamma.$ Of particular interest are when $H$ is a subgroup of $G$ and when $\pi$ is a quotient map.
        \item Let $V_0 \subset V$ be invariant under the action of $G$. If $\alpha$ is sofic, then the restriction of $\alpha$ to $(V_0,E\mid_{V_0\times V_0})$ is sofic.
        \item If $G_1\subset G_2 \subset \cdots$ is an increasing sequence of subgroups of $G$ whose union is $G,$ and $\alpha:G\curvearrowright\Gamma$ restricted to each $G_i$ is sofic, then $\alpha$ is sofic.
        \item If $G\curvearrowright\Gamma$ is sofic, then $G\curvearrowright V$ is sofic in the sense of \cite{gao2024soficity}. If $G\curvearrowright V$ is sofic in the sense of \cite{gao2024soficity} then $G\curvearrowright(V,\varnothing)$ and $G\curvearrowright(V,(V \times V) \setminus D)$ are sofic. Here, $D \subset V \times V$ is the diagonal, so $(V,(V \times V) \setminus D)$ is the complete graph on the vertex set $V$.
        \item $G\curvearrowright\Gamma$ is sofic if and only if $G\curvearrowright \Gamma^c$ is sofic. We define $\Gamma^c = (V,E^c)$ where $(v,w)\in E^c$ if and only if $v\neq w$ and $(v,w)\not\in E.$
    \end{enumerate}
\end{prop}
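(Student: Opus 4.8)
The plan is to treat all five parts by reusing, with only cosmetic modifications, the orbit-approximation data guaranteed by soficity: a finite set $A$, a map $\varphi\colon G\to\Sym(A)$, a finite graph $B$, a subset $S\subset A$, and for each $s\in S$ a graph embedding $\pi_s\colon (W,E\mid_{W\times W})\hookrightarrow B$. For (1), fix finite $F\subset H$, $W\subset V$, $\epsilon>0$; since $\pi(F)\subset G$ is finite, apply soficity of $\alpha$ to $(\pi(F),W,\epsilon)$ and observe that $\varphi\circ\pi$ is unital (as $\pi(1_H)=1_G$), $(F,\epsilon)$-multiplicative (as $\pi(g),\pi(h)\in\pi(F)$ for $g,h\in F$), and an $(F,W,\epsilon)$-orbit approximation of $\alpha\circ\pi$ with the same witnesses $B,S,(\pi_s)$, because $(\alpha\circ\pi)(h^{-1})=\alpha(\pi(h)^{-1})$. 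For (2), fix finite $F\subset G$, $W\subset V_0$, $\epsilon>0$ and apply soficity of $\alpha$ to $(F,W,\epsilon)$: the induced structure $E\mid_{W\times W}$ is unchanged on restricting to $V_0$ since $W\subset V_0$, and the orbit-approximation identity only refers to $\alpha$ on $W\cup\alpha(F)W\subset V_0$, so the same data works verbatim. For (3), fix finite $F\subset G$, $W\subset V$, $\epsilon>0$; since $F$ is finite, $F\subset G_i$ for some $i$, and a witnessing $\varphi\colon G_i\to\Sym(A)$ may be extended to $G$ arbitrarily (say by $1$ off $G_i$) — unitality, $(F,\epsilon)$-multiplicativity and the $(F,W,\epsilon)$-orbit approximation property only constrain elements of $F\subset G_i$, hence are unaffected.

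For (4), note that a graph embedding is in particular an injection, so forgetting the edge set of $B$ turns an $(F,W,\epsilon)$-orbit approximation of $\alpha\colon G\curvearrowright\Gamma$ into one of $\alpha\colon G\curvearrowright V$ in the sense of \cite{gao2024soficity}; conversely, given injections $\pi_s\colon W\to B$ witnessing soficity of $G\curvearrowright V$, equipping $B$ with the empty (resp. complete) edge set makes each $\pi_s$ a graph embedding of $(W,\varnothing)$ (resp. of the complete graph on $W$, using injectivity), yielding soficity of $G\curvearrowright(V,\varnothing)$ and of $G\curvearrowright(V,(V\times V)\setminus D)$; the last of these also follows from (5) applied to $G\curvearrowright(V,\varnothing)$. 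For (5), by symmetry ($\Gamma^{cc}=\Gamma$ and $\Aut(\Gamma)=\Aut(\Gamma^c)$) it suffices to deduce soficity of $\alpha\colon G\curvearrowright\Gamma^c$ from that of $\alpha\colon G\curvearrowright\Gamma$: given witnesses $B,S,(\pi_s)$ for $\Gamma$, replace $B$ by $B^c$; for $v\neq w\in W$, injectivity and the embedding property for $\Gamma$ give $(v,w)\in E^c\iff(v,w)\notin E\iff(\pi_s(v),\pi_s(w))\notin E(B)\iff(\pi_s(v),\pi_s(w))\in E(B^c)$, so $\pi_s\colon(W,E^c\mid_{W\times W})\hookrightarrow B^c$ is a graph embedding, while the orbit-approximation identity $\pi_{\varphi(g)s}(v)=\pi_s(\alpha(g^{-1})v)$, which does not mention edge sets, persists and $\alpha$ acts on $\Gamma^c$ by the same automorphisms.

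I do not anticipate a real obstacle; these are all genuinely soft permanence statements. The only points that need a moment's care are: in (1), remembering to feed $\pi(F)$ (not $F$) into all three parameters of the soficity hypothesis; in (4), recalling that the notion of orbit approximation of \cite{gao2024soficity} is exactly the present one with ``graph embedding'' relaxed to ``injection''; and in (4)--(5), the elementary observations that an injection of vertex sets is automatically a graph embedding when both graphs are empty or both complete, and that passing to complements converts a graph embedding into $B$ into a graph embedding into $B^c$.
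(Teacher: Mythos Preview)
Your proposal is correct and follows essentially the same approach as the paper's own proof: in each of the five parts you recycle the orbit-approximation data $(A,\varphi,B,S,(\pi_s))$ with only the obvious cosmetic changes (precompose with $\pi$ in (1), use $W\subset V_0\subset V$ in (2), extend $\varphi$ arbitrarily off $G_i$ in (3), forget or impose the trivial edge structure on $B$ in (4), and replace $B$ by $B^c$ in (5)). The paper's proof is identical in substance, and your remarks about the points needing care (feeding $\pi(F)$ into the hypothesis in (1), the injection-versus-graph-embedding distinction in (4), and complementation of $B$ in (5)) are exactly the ones the paper implicitly relies on.
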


\begin{proof}
    \textcolor{white}{.}
    \begin{enumerate}
        \item Let $F\subset H$ and $W\subset V$ be finite subsets and $\ee > 0.$ Since $\alpha$ is sofic, there exist a set $A$ and a map $\varphi:G \to \Sym(A)$ such that $\varphi$ is unital, $(\pi(F),\ee)$-multiplicative, and is an $(\pi(F),W,\ee)$ orbit approximation. It is immediate that the map $\varphi\circ\pi$ is unital, $(F,\ee)$-multiplicative, and is an $(F,W,\ee)$ orbit approximation.
        \item We must show that for every $F\subset G$ and $W\subset V_0$ finite subsets and $\ee>0$ that there exist a set $A$ and a map $\varphi:G \to \Sym(A)$ such that $\varphi$ is unital, $(F,\ee)$-multiplicative, and is an $(F,W,\ee)$ orbit approximation. Since $\alpha$ is sofic and $W\subset V_0 \subset V$, we always have such $A$ and $\varphi.$
        \item  Let $F\subset G$ and $W\subset V$ be finite subsets and $\ee > 0.$ Since $G$ is the increasing union of the $G_n,$ $F\subset G_N$ for some integer $N.$ Since $\alpha$ restricted to $G_N$ is sofic, we can find a set $A$ and a map $\varphi:G_N\to\Sym(A)$ such that $\varphi$ is unital, $(F,\ee)$-multiplicative, and is a $(F,W,\ee)$ orbit approximation. Any extension of $\varphi$ to $G$ will still be unital, $(F,\ee)$-multiplicative, and be a $(F,W,\ee)$ orbit approximation since the definitions of $(F,\ee)$-multiplicative and $(F,W,\ee)$ orbit approximation only concern group elements in $F.$
        \item Let $\alpha$ denote the action of $G$ on the vertex set $V$ of the graph $\Gamma.$ It suffices to look at orbit approximations since the definition of $(F,\ee)$-multiplicative is the same for sets and graphs.
        
        For the first implication, we must show that for every $F\subset G$ and $W\subset V$ finite and $\ee>0,$ there is a set $A$ and a map $\varphi:G\to\Sym(A)$ such that there exists a set $B,$ a subset $S\subset A$ such that $|S| > (1-\ee)|A|$ and for all $s\in S$ there is an injection $\pi_s : W \to B$ such that $\pi_{\varphi(g)s}(v) = \pi_s(\alpha(g^{-1})v)$ whenever $s,\varphi(g)s\in S$, $g\in F,$ and $v,\alpha(g^{-1})v \in W.$ But this is exactly saying that we require the existence of an $(F,W,\ee)$ orbit approximation, minus the requirement that the maps $\pi_s$ are graph embeddings. So the first implication is clear.

        For the second implication, we note that if $G\curvearrowright V$ is sofic in the sense of \cite{gao2024soficity} (see the previous paragraph), if $E = \emptyset$ is the edge set of $\Gamma,$ and we endow $B$ with the empty edge set, then the maps $\pi_s$ are all graph embeddings. Thus $\Gamma\curvearrowright(\Gamma,\emptyset)$ is sofic. The statement about complete graphs follows similarly.
        \item As in (4), it suffices to check the existence of orbit approximations. Let $F\subset G$ and $W\subset V$ be finite and $\ee>0.$ Then since $\G\curvearrowright \Gamma$ is sofic there is a set $A$ and a map $\varphi:G\to\Sym(A)$ such that there exists a graph $B,$ a subset $S\subset A$ such that $|S| > (1-\ee)|A|$ and for all $s\in S$ there is a graph embedding $\pi_s : (W, E|_{W\times W}) \to B$ such that $\pi_{\varphi(g)s}(v) = \pi_s(\alpha(g^{-1})v)$ whenever $s,\varphi(g)s\in S$, $g\in F,$ and $v,\alpha(g^{-1})v \in W.$ We can define the maps $\pi_s^c : (W, E^c|_{W\times W}) \to B^c$ by $\pi_s^c(v) = \pi_s(v)$ -- note that $\pi_s^c$ is still a graph embedding. Then $A, \varphi,$ $S,$ $B^c,$ and the maps $\pi_s^c$ combine to give an $(F,W,\ee)$ orbit approximation of the action of $G\curvearrowright\Gamma^c$ as required. \qedhere
    \end{enumerate}
\end{proof}

\begin{prop}\label{co product propo}
    If $G_i\curvearrowright^{\alpha_i} \Gamma_i=(V_i,E_i)$, $i=1,2$ are sofic actions of groups on graphs, then so is $G_1\times G_2 \curvearrowright^\alpha \Gamma_1 \sqcup \Gamma_2$ defined by $(g_1,g_2) v = g_iv$ if $v\in V_i$.
\end{prop}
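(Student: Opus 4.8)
The plan is to fix finite data $F = F_1 \times F_2 \subset G_1 \times G_2$ and $W \subset V_1 \sqcup V_2$ together with $\epsilon > 0$, and to build an orbit approximation of $\alpha$ from orbit approximations of $\alpha_1$ and $\alpha_2$. Write $W_i = W \cap V_i$, so $W = W_1 \sqcup W_2$. First I would invoke soficity of each $\alpha_i$ to obtain a finite set $A_i$ and a map $\varphi_i : G_i \to \Sym(A_i)$ that is unital, $(F_i, \delta)$-multiplicative, and an $(F_i, W_i, \delta)$-orbit approximation of $\alpha_i$, witnessed by a finite graph $B_i$, a subset $S_i \subset A_i$ with $|S_i| > (1-\delta)|A_i|$, and graph embeddings $\pi^i_s : (W_i, E_i\!\mid_{W_i \times W_i}) \hookrightarrow B_i$, where $\delta$ is chosen small enough (e.g. $\delta < \epsilon/2$) relative to $\epsilon$. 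By replacing each $A_i$ with $A_i$ times a large auxiliary set I may assume $|A_1| = |A_2| =: N$, so that $A := A_1 = A_2$ as a set of cardinality $N$ (this rescaling only improves the multiplicativity and orbit-approximation estimates, as in the proof of Proposition \ref{ultra-prod}).

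The key construction is this: set $A = [N]$, and define $\varphi : G_1 \times G_2 \to \Sym(A)$ by $\varphi(g_1, g_2) = \varphi_1(g_1)$ — that is, we let $G_1$ act via its approximation and let $G_2$ act trivially. Wait — this is not symmetric and will not detect the $G_2$-action on $\Gamma_2$; instead I would take $A = A_1 \sqcup A_2$ (a set of size $2N$, or after further rescaling one can arrange $|A_1| = |A_2|$ is unnecessary) and define $\varphi(g_1, g_2)$ to act as $\varphi_1(g_1)$ on the $A_1$-part and as $\varphi_2(g_2)$ on the $A_2$-part; this is manifestly unital and $(F, \epsilon)$-multiplicative since each $\varphi_i$ is $(F_i, \delta)$-multiplicative and the Hamming distance splits as a weighted average over the two halves. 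For the orbit approximation, take $B = B_1 \sqcup B_2$ and $S = S_1 \sqcup S_2$, so $|S|/|A| = (|S_1| + |S_2|)/2N > 1 - \delta > 1 - \epsilon$. For $s \in S_1$ define $\pi_s : (W, E\!\mid_{W \times W}) \hookrightarrow B$ by $\pi_s(v) = \pi^1_s(v)$ if $v \in W_1$, and $\pi_s(v) = b$ for $v \in W_2$ where $b \in B_2$ is some fixed auxiliary vertex — no: since $W_2$ may have several vertices and $\pi_s$ must be injective and an embedding, I instead augment each $B_i$ with a fixed embedded copy of $(W_{3-i}, E\!\mid_{W_{3-i} \times W_{3-i}})$ placed with no edges to the rest of $B_i$, which is harmless because in $\Gamma_1 \sqcup \Gamma_2$ there are no edges between $V_1$ and $V_2$, so $(W, E\!\mid_{W \times W}) = (W_1, E_1\!\mid) \sqcup (W_2, E_2\!\mid)$ as graphs. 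Thus for $s \in S_i$, $\pi_s$ restricts to $\pi^i_s$ on $W_i$ and to the fixed copy on $W_{3-i}$; this is a graph embedding.

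Finally I would verify the compatibility relation $\pi_{\varphi(g)s}(v) = \pi_s(\alpha(g^{-1})v)$ for $g = (g_1, g_2) \in F$, $s \in S$, $v \in W$ whenever $\varphi(g)s \in S$ and $\alpha(g^{-1})v \in W$. The point is that the action $\alpha$ preserves $V_1$ and $V_2$ and $\varphi(g)$ preserves $A_1$ and $A_2$, so everything decomposes: if $s \in S_1$ and $v \in W_1$ then $\alpha(g^{-1})v = \alpha_1(g_1^{-1})v \in W_1$, $\varphi(g)s = \varphi_1(g_1)s \in S_1$, and the relation reduces to $\pi^1_{\varphi_1(g_1)s}(v) = \pi^1_s(\alpha_1(g_1^{-1})v)$, which holds by hypothesis; if $v \in W_2$ then both sides equal the fixed image of $v$ under the auxiliary copy (since the auxiliary copy is the same for all $s \in S_1$ and $\alpha(g^{-1})v \in W_2$ again), so the relation holds trivially; the cases $s \in S_2$ are symmetric. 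I expect the only mildly delicate point to be the bookkeeping that makes $\pi_s$ a genuine graph embedding on all of $W$ rather than just on $W_i$ — handled by the auxiliary embedded copies above, using crucially that $\Gamma_1 \sqcup \Gamma_2$ has no cross edges — and the routine $\epsilon$-arithmetic for multiplicativity, which is a weighted average and so is no worse than $\delta$. (For countably many factors one then appeals, as noted after the proposition, to the fact that any finite $W$ meets only finitely many $V_i$ together with part (3) of Proposition \ref{permanence_simple}.)
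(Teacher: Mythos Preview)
There is a genuine gap. With $A = A_1 \sqcup A_2$ and $\varphi(g_1,g_2)$ acting as $\varphi_i(g_i)$ on $A_i$, the compatibility relation fails for $s \in S_1$ and $v \in W_2$. You claim that both sides of $\pi_{\varphi(g)s}(v) = \pi_s(\alpha(g^{-1})v)$ equal ``the fixed image of $v$'', but the right-hand side is $\pi_s(\alpha_2(g_2^{-1})v)$, which is the fixed image of $\alpha_2(g_2^{-1})v$, not of $v$. Concretely, take $g=(e,g_2)$ with $v' := \alpha_2(g_2^{-1})v \in W_2 \setminus \{v\}$ (possible as soon as some element of $F_2$ moves a vertex of $W_2$ inside $W_2$); then $\varphi(g)s = \varphi_1(e)s = s \in S$, so the relation reads $\pi_s(v) = \pi_s(v')$, contradicting the injectivity of $\pi_s$.

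The issue is structural, not a matter of bookkeeping: for $s \in A_1$ the element $\varphi(g)s$ depends only on $g_1$, whereas $\alpha(g^{-1})|_{V_2}$ depends only on $g_2$, and no $s$-independent embedding of $W_2$ can track the $G_2$-action. The paper resolves this by taking the \emph{product} $A = A_1 \times A_2$ with $\varphi(g_1,g_2)(a_1,a_2) = (\varphi_1(g_1)a_1,\varphi_2(g_2)a_2)$, setting $S = S_1 \times S_2$, $B = B_1 \sqcup B_2$, and for $s=(s_1,s_2)$ defining $\pi_s(v) = \pi^i_{s_i}(v)$ when $v \in W_i$. Now each $s$ carries a coordinate for each factor, and the compatibility for $v \in W_i$ reduces directly to that of $\alpha_i$; the only cost is that multiplicativity and the size of $S$ require $(1-\epsilon')^2 \geq 1-\epsilon$ rather than a weighted average.
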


\begin{proof}
    Fix finite subsets $F\subset G_1\times G_2,$ $W\subset V_1\sqcup V_2,$ and $\ee>0.$ Write $W_1 = W\cap V_1$ and $W_2 = W\cap V_2.$ We may also find finite subsets $F_i\subset G_i,$ $i=1,2$ such that $F\subset F_1\times F_2.$ Choose $\ee'>0$ such that $(1-\ee')^2\geq 1-\ee.$

    Since each action $G_i\curvearrowright^{\alpha_i} \Gamma_i$ is sofic, there exist, for each $i=1,2,$ maps $\varphi_i:G_i\to \Sym(A_i)$ which are unital, $(F_i,\ee')$-multiplicative, and an $(F_i,W_i,\ee')$-orbit approximation of $\alpha_i$. Define $\varphi: G_1\times G_2 \to \Sym(A_1\times A_2)$ by
    $$\varphi(g_1,g_2)(a_1,a_2) = (\varphi_1(g_1)a_1,\varphi_2(g_2)a_2).$$
    It is clear $\varphi$ is unital. We now show $\varphi$ is $(F,\ee)$-multiplicative. Indeed, for $(g_1,g_2),(h_1,h_2)\in F\subset F_1\times F_2,$ we have
    \begin{align*}
        |A_1\times A_2|\cdot(1-d(\varphi((g_1h_1,g_2h_2)),\varphi((g_1,g_2))\varphi((h_1,h_2)))) &= \prod_{i=1}^2|\{a\in A_i : \varphi_i(g_ih_i)a = \varphi_i(g_i)\varphi_i(h_i)a\}| \\
        &= \prod_{i=1}^2 |A_i| \cdot (1-d(\varphi_i(g_ih_i),\varphi_i(g_i)\varphi_i(h_i))) \\
        &> |A_1\times A_2| \cdot (1-\ee')^2\\
        &\geq |A_1\times A_2| \cdot (1-\ee),
    \end{align*}
    so $d(\varphi((g_1h_1,g_2h_2)),\varphi((g_1,g_2))\varphi((h_1,h_2))) < \ee.$

    It remains to show $\varphi$ is an $(F,W\ee)$-orbit approximation of $\alpha$. Since each $\varphi_i$ is an $(F_i,W_i,\ee')$-orbit approximation of $\alpha_i,$ there exist finite sets $S_i \subset A_i$ such that $|S_i| > (1-\ee')|A_i|$ and finite graphs $B_i$ such that there exist, for each $s\in S_i,$ graph embeddings $\pi_s^i W_i \hookrightarrow B_i$. Let $S = S_1\times S_2.$ We set $B$ to be the coproduct of $B_1$ and $B_2$; i.e., $B = B_1\sqcup B_2.$ For each $s = (s_1,s_2)\in S,$ define $\pi_s : W = W_1\sqcup W_2 \hookrightarrow B$ by
    $$\pi_s(v) = \pi_{s_i}^i(v), \text{ when } v\in W_i.$$
    It is immediate to check that $\pi_s$ is always a graph embedding. Finally, fix $s=(s_1,s_2)\in S,$ $g=(g_1,g_2) \in F\subset F_1\times F_2,$ and $v\in W_i\subset W.$ Assume $\varphi(g)s\in S$ and $\alpha(g^{-1})v\in W.$ Then $\alpha_i(g_i^{-1})v = \alpha(g_i^{-1})v = \alpha(g^{-1})v\in W_i.$ Also, $\varphi(g)s = (\varphi_1(g_1)s_1,\varphi_2(g_2)s_2)\in S = S_1\times S_2$ which implies $\varphi_i(g_i)s_i\in S_i$ for $i=1,2,$ so that
    $$\pi_{\varphi(g)s}(v) = \pi^i_{\varphi_i(g)s_i}(v) = \pi^i_{s_i}(\alpha_i(g_i^{-1})v) = \pi^i_{s_i}(\alpha(g_i^{-1})v) = \pi_s(\alpha(g^{-1})v),$$
    concluding the proof.
\end{proof}

\begin{prop}\label{product propo}
    If $G_i\curvearrowright^{\alpha_i} \Gamma_i=(V_i,E_i)$, $i=1,2$ are sofic actions of groups on graphs, then so is $G_1\times G_2 \curvearrowright^\alpha \Gamma_1 \prod \Gamma_2$ defined by $(g_1,g_2) (v_1,v_2) = (g_1v_1,g_2v_2)$.
\end{prop}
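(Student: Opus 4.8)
The plan is to run exactly the template used for the coproduct in Proposition~\ref{co product propo}, with the coproduct of the approximating graphs replaced by the \emph{same} graph product operation $\prod$ that defines $\Gamma_1\prod\Gamma_2$. Fix finite $F\subset G_1\times G_2$, $W\subset V_1\times V_2$, and $\ee>0$. Let $F_i\subset G_i$ and $W_i\subset V_i$ be the coordinate projections of $F$ and $W$, so that $F\subset F_1\times F_2$ and $W\subset W_1\times W_2$; these are finite. Choose $\ee'>0$ with $(1-\ee')^2\geq 1-\ee$. Using soficity of each $\alpha_i$, pick maps $\varphi_i:G_i\to\Sym(A_i)$ that are unital, $(F_i,\ee')$-multiplicative, and $(F_i,W_i,\ee')$-orbit approximations of $\alpha_i$, witnessed by finite graphs $B_i$, subsets $S_i\subset A_i$ with $|S_i|>(1-\ee')|A_i|$, and graph embeddings $\pi^i_s:(W_i,E_i\mid_{W_i\times W_i})\hookrightarrow B_i$ for each $s\in S_i$. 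Define $\varphi:G_1\times G_2\to\Sym(A_1\times A_2)$ by $\varphi(g_1,g_2)(a_1,a_2)=(\varphi_1(g_1)a_1,\varphi_2(g_2)a_2)$.

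The checks that $\varphi$ is unital and $(F,\ee)$-multiplicative are word-for-word the computation in Proposition~\ref{co product propo}: the fixed-point set of a coordinatewise product permutation is the product of the fixed-point sets, so for $(g_1,g_2),(h_1,h_2)\in F\subset F_1\times F_2$ one gets $1-d(\varphi((g_1h_1,g_2h_2)),\varphi((g_1,g_2))\varphi((h_1,h_2)))\geq(1-\ee')^2\geq 1-\ee$.

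For the orbit approximation, set $S=S_1\times S_2$, so $|S|>(1-\ee')^2|A_1\times A_2|\geq(1-\ee)|A_1\times A_2|$, let $B=B_1\prod B_2$ (with the \emph{same} product operation used for $\Gamma_1\prod\Gamma_2$, which is a finite graph), and for $s=(s_1,s_2)\in S$ define $\pi_s:W\hookrightarrow B$ by $\pi_s(v_1,v_2)=(\pi^1_{s_1}(v_1),\pi^2_{s_2}(v_2))$. Injectivity of $\pi_s$ is immediate from injectivity of each $\pi^i_{s_i}$. The only step that genuinely uses the structure of graph products is that $\pi_s$ is a graph embedding: by definition of $\prod$, whether $((v_1,v_2),(w_1,w_2))$ is an edge of $\Gamma_1\prod\Gamma_2$ is a fixed Boolean function of the four truth values $v_1=w_1$, $v_2=w_2$, $(v_1,w_1)\in E_1$, $(v_2,w_2)\in E_2$; each $\pi^i_{s_i}$ preserves the bits $v_i=w_i$ (being injective) and $(v_i,w_i)\in E_i$ (being a graph embedding), and the edge relation of $B=B_1\prod B_2$ is given by the same Boolean function, so $\pi_s$ preserves adjacency and non-adjacency. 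Finally the equivariance identity $\pi_{\varphi(g)s}(v)=\pi_s(\alpha(g^{-1})v)$ is verified coordinatewise as in Proposition~\ref{co product propo}: for $s=(s_1,s_2)\in S$, $g=(g_1,g_2)\in F\subset F_1\times F_2$, $v=(v_1,v_2)\in W$ with $\varphi(g)s\in S$ and $\alpha(g^{-1})v\in W$, one has $\varphi_i(g_i)s_i\in S_i$ and $\alpha_i(g_i^{-1})v_i\in W_i$, so each coordinate of $\pi_{\varphi(g)s}(v)$ equals $\pi^i_{s_i}(\alpha_i(g_i^{-1})v_i)$, the corresponding coordinate of $\pi_s(\alpha(g^{-1})v)$.

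I do not expect a serious obstacle here. The two points requiring care are (i) taking the $W_i$ to be the projections of $W$, so that $W\subset W_1\times W_2$ and $\alpha_i(g_i^{-1})v_i$ genuinely lands in $W_i$ whenever $\alpha(g^{-1})v\in W$, and (ii) the observation that a coordinatewise product of graph embeddings into the product graph $B_1\prod B_2$ is again a graph embedding, which is precisely where the ``logically determined'' clause in the definition of $\prod$ is invoked.
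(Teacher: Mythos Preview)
Your proposal is correct and follows essentially the same argument as the paper's proof: the paper likewise enlarges to $F=F_1\times F_2$ and $W=W_1\times W_2$, builds $\varphi$ coordinatewise, sets $S=S_1\times S_2$ and $B=B_1\prod B_2$, and defines $\pi_s$ componentwise via the $\pi^i_{s_i}$. If anything, your write-up is more explicit at the one point the paper glosses over---namely, invoking the ``logically determined'' clause to see that a coordinatewise product of graph embeddings into $B_1\prod B_2$ is again a graph embedding.
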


\begin{proof}
    Let $F\subset G_1\times G_2$ and $W\subset V_1\times V_2$ be finite subsets. Without loss of generality we may assume $F = F_1\times F_2$ with $F_i\subset G_i$ finite and $W = W_1\times W_2$ with $W_i\subset V_i$ finite. Let $\ee>0.$ Choose $\ee'> 0$ such that $(1-\ee')^2 \geq 1\ee.$

    Since each action $G_i\curvearrowright^{\alpha_i} \Gamma_i$ is sofic, there exist, for each $i=1,2,$ maps $\varphi_i:G_i\to \Sym(A_i)$ which are unital, $(F_i,\ee')$-multiplicative, and an $(F_i,W_i,\ee')$-orbit approximation of $\alpha_i$. Define $\varphi: G_1\times G_2 \to \Sym(A_1\times A_2)$ by
    $$\varphi(g_1,g_2)(a_1,a_2) = (\varphi_1(g_1)a_1,\varphi_2(g_2)a_2).$$
    From the previous proposition we already know $\varphi$ is unital and $(F,\ee)$-multiplicative.

    It remains to show $\varphi$ is an $(F,W\ee)$-orbit approximation of $\alpha$. Since each $\varphi_i$ is an $(F_i,W_i,\ee')$-orbit approximation of $\alpha_i,$ there exist finite sets $S_i \subset A_i$ such that $|S_i| > (1-\ee')|A_i|$ and finite graphs $B_i$ such that there exist, for each $s\in S_i,$ graph embeddings $\pi_s^i W_i \hookrightarrow B_i$. Let $S = S_1\times S_2.$ We set $B$ to be the product of $B_1$ and $B_2$; i.e., $B = B_1\prod B_2.$ Denote by $\Lambda_i$ the restriction of $\Gamma_i$ to the vertex set $W_i.$ For each $s = (s_1,s_2)\in S,$ define $\pi_s : \Lambda = \Lambda_1\prod\Lambda-2 \hookrightarrow B$ by
    $$\pi_s((v_1,v_2)) = (\pi_{s_1}^1(v_1),\pi_{s_2}^2(v_2)).$$
    It is straightforward to check that, since each $\pi_s^i$ is a graph embedding, $\pi_s$ is always a graph embedding. Finally, fix $s=(s_1,s_2)\in S,$ $g=(g_1,g_2) \in F\subset F_1\times F_2,$ and $(v_1,v_2)\in W.$ Assume $\varphi(g)s\in S$ and $\alpha(g^{-1})v\in W.$ Then
    \begin{align*}
        \pi_{\varphi(g)s}(v) &= \pi_{(\varphi_1(g_1)s_1,\varphi_2(g_2)s_2)}(v_1,v_2) \\
        &= (\pi^1_{\varphi_1(g_1)s_1}(v_1),\pi^2_{\varphi_2(g_2)s_2}(v_2)) \\
        &= (\pi^1_{s_1}(\alpha_1(g_1^{-1})v_1),\pi^2_{s_2}(\alpha_2(g_2^{-1})v_2)) \\
        &= \pi_{(s_1,s_2)}(\alpha_1(g_1^{-1}v_1),\alpha_2(g_2^{-1})v_2)\\
        &=\pi_s(\alpha(g^{-1}v)),
    \end{align*}
    finishing the proof.
\end{proof}

\begin{prop}
\label{sofic-actions-imply-sofic-group}
    If $G\curvearrowright^\alpha\Gamma = (V,E)$ is sofic, then there is $H\trianglelefteq G$ such that $H\leq \bigcap_{v\in V}\Stab(v)$ and $G/H$ is sofic.
\end{prop}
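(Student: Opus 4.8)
The plan is to let $H = \bigcap_{v \in V} \Stab(v)$, which is manifestly normal in $G$ (it is the kernel of the composite $G \to \Aut(\Gamma) \to \Sym(V)$ — more precisely the kernel of the action on vertices), and show that $G/H$ is sofic. Since $G/H$ acts on the vertex set $V$ faithfully, this amounts to showing that the induced action of the quotient is still ``sofic enough'' to produce genuine sofic approximations of the group $G/H$. Concretely, fix a finite subset $\bar F \subset G/H$ and $\epsilon > 0$; lift $\bar F$ to a finite subset $F \subset G$. Because each $g \in F$ with $\bar g \neq \bar 1$ moves some vertex, I would choose, for each such $g$, a vertex $w_g \in V$ with $\alpha(g^{-1}) w_g \neq w_g$ (equivalently $\alpha(g) w_g \neq w_g$), and let $W = \{w_g : g \in F, \bar g \neq \bar 1\}$, a finite subset of $V$. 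Also enlarge $F$ so that it is symmetric and contains $1_G$.

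Next I would invoke soficity of $\alpha$ applied to $(F, W, \epsilon')$ for a suitably small $\epsilon' > 0$: there is a finite set $A$ and a unital, $(F,\epsilon')$-multiplicative map $\varphi: G \to \Sym(A)$ which is an $(F, W, \epsilon')$-orbit approximation of $\alpha$, witnessed by a finite graph $B$, a subset $S \subset A$ with $|S| > (1-\epsilon')|A|$, and graph embeddings $\pi_s : (W, E|_{W\times W}) \hookrightarrow B$ satisfying the compatibility relation. The map $\varphi$ descends (after the usual small modification) to a candidate sofic approximation of $G$; the only thing needing checking is the separation condition $d(1, \varphi(g)) > 1 - \epsilon$ for $g \in F$ with $\bar g \neq \bar 1$. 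The key observation is that if $\varphi(g)s = s$ for some $s \in S$ with also $\varphi(g)s \in S$, then — since $w_g \in W$ and $\alpha(g^{-1})w_g \in W$ — the orbit-approximation identity gives $\pi_s(w_g) = \pi_{\varphi(g)s}(w_g) = \pi_s(\alpha(g^{-1})w_g)$, and injectivity of the graph embedding $\pi_s$ forces $w_g = \alpha(g^{-1})w_g$, contradicting the choice of $w_g$. Hence the set of fixed points of $\varphi(g)$ inside $S$ is empty, so the total number of fixed points of $\varphi(g)$ in $A$ is at most $|A \setminus S| < \epsilon'|A|$, giving $d(1, \varphi(g)) > 1 - \epsilon'$. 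Choosing $\epsilon' \leq \epsilon$ from the start finishes the estimate.

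Finally, to pass from $\varphi: G \to \Sym(A)$ to a map $\bar\varphi: G/H \to \Sym(A)$, I would simply choose a set-theoretic section $G/H \to G$ and precompose; unitality, $(\bar F, \epsilon)$-multiplicativity, and the separation bound on $\bar F \setminus \{\bar 1\}$ all transfer because these conditions only constrain finitely many group elements at a time and are insensitive to the choice of representative as long as the relevant products are controlled — one should be slightly careful here and instead run the argument by choosing $F$ to be closed under the relevant products of the lifted generators, which is the routine bookkeeping step.

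I expect the main (though modest) obstacle to be exactly this last bookkeeping: ensuring that after lifting $\bar F$ and enlarging, the multiplicativity of $\varphi$ on $F$ genuinely yields multiplicativity of $\bar\varphi$ on $\bar F$, since $\bar\varphi(\bar g\bar h)$ is computed via a lift of $\bar g \bar h$ that need not be the product of the lifts of $\bar g$ and $\bar h$ in $G$. This is handled by including, for each pair $\bar g, \bar h \in \bar F$, both the product-of-lifts and the chosen-lift-of-the-product in $F$, together with one fixed element of $H$ realizing their difference, and using $(F,\epsilon')$-multiplicativity of $\varphi$ twice; since $\varphi$ is genuinely trivial modulo nothing, the $H$-correction is absorbed into the $\epsilon$ error. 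All other steps are immediate from the definitions and from Proposition~\ref{permanence_simple}(4) if one prefers to phrase the vertex-action soficity through there.
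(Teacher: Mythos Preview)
Your separation argument is correct (and is the finitary counterpart of the paper's key step), but there are two issues, one minor and one genuine.

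\textbf{Minor.} When you invoke the orbit-approximation identity $\pi_{\varphi(g)s}(w_g) = \pi_s(\alpha(g^{-1})w_g)$, the definition requires $\alpha(g^{-1})w_g \in W$, which you assert but did not build into $W$. Simply enlarge $W$ to $\{w_g,\ \alpha(g^{-1})w_g : g \in F,\ \bar g \neq \bar 1\}$.

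\textbf{Genuine gap.} The transfer to $G/H$ via a section does \emph{not} go through as you claim. With $\sigma(\bar g \bar h) = \sigma(\bar g)\sigma(\bar h)k$ for some $k \in H$, $(F,\epsilon')$-multiplicativity gives
\[
\varphi(\sigma(\bar g \bar h)) \approx \varphi(\sigma(\bar g))\varphi(\sigma(\bar h))\varphi(k),
\]
so you need $d(\varphi(k),1)$ small. But nothing in the orbit-approximation hypothesis controls $\varphi$ on $H$: for $k \in H$ the relation only yields $\pi_{\varphi(k)s} = \pi_s$ as maps $W \to B$, which does \emph{not} force $\varphi(k)s = s$ (distinct indices $s$ may give identical embeddings). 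Your phrase ``the $H$-correction is absorbed into the $\epsilon$ error'' is therefore unjustified, and this is not routine bookkeeping---it is precisely where the argument fails if you insist on $H = \bigcap_v \Stab(v)$.

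\textbf{Comparison with the paper.} The paper avoids this obstacle by not fixing $H$ in advance. It passes to the ultraproduct homomorphism $\varphi: G \to \prod_{\mathcal U}(S_n,d)$ furnished by Proposition~\ref{ultra-prod} and sets $H := \ker\varphi$; then $G/H$ is sofic automatically (it embeds in the universal sofic group), and the only thing to check is $H \le \bigcap_v \Stab(v)$, which follows from injectivity of $\pi$ exactly as in your separation step. Your finitary argument proves the equivalent statement $\ker\varphi \subset \bigcap_v \Stab(v)$; the point is that you should take $H$ to be this kernel (or, finitarily, assemble your $\varphi$'s into an ultraproduct and take the kernel there) rather than trying to descend a single $\varphi$ to the prescribed quotient $G/\bigcap_v \Stab(v)$.
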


\begin{proof}
    By Theorem \ref{ultra-prod}, there are maps $\varphi:G\to\prod_\cU(S_n,d)$ and $\pi:V\to\V_\cU$ such that $\bS(\varphi(g))(\pi(v)) = \pi(\alpha(g)v)$ for all $g\in G$ and $v\in V$. Define $H = \ker(\varphi)$. Then $G/H$ is sofic since it embeds into $\prod_\cU(S_n,d).$ It remains to show that $H\subset \Stab(v)$ for all $v\in V.$ Let $g\in H.$ Then $\varphi(g) = 1,$ so that $\pi(\alpha(g)v) = \bS(\varphi(g))(\pi(v)) = \pi(v)$. But $\pi$ is an isometry, so it is injective, and thus $v = \alpha(g)v$. In other words, $g\in \Stab(v)$.
\end{proof}

The key point of the following is verifying the maps $\pi_s$ are graph embeddings.

\begin{thm}\label{amenable actions all}
    Any action of an amenable group on a graph is sofic.
\end{thm}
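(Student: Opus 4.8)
The plan is to adapt the standard Følner-set construction of sofic approximations for amenable groups—used in the authors' prior work \cite{gao2024soficity}—to the graph setting, where the new ingredient is verifying that the orbit-approximation maps $\pi_s$ are genuine graph embeddings rather than mere injections. Fix an action $\alpha: G \curvearrowright \Gamma = (V,E)$ with $G$ amenable, and fix finite $F \subset G$, $W \subset V$, and $\epsilon > 0$. Enlarging $F$ if necessary, we may assume $F$ is symmetric and contains $1_G$, and we may also assume $W$ is chosen so that $\alpha(g^{-1})v$ lies in a slightly larger fixed finite set for all $g \in F$, $v \in W$; more simply, we just work with $W$ and the finite set $F^{-1}W \cup W$. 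By amenability, choose a finite $(F, \delta)$-invariant Følner set $A \subset G$—meaning $|gA \triangle A| < \delta|A|$ for all $g \in F$—for $\delta$ small to be specified. Let $\varphi: G \to \Sym(A)$ be defined by letting $\varphi(g)$ act as left multiplication by $g$ on the set $A \cap g^{-1}A$ (so that $g \cdot a \in A$), and extending $\varphi(g)$ arbitrarily to a permutation of $A$; set $\varphi(1_G) = \mathrm{id}$. Standard estimates show $\varphi$ is unital and $(F,\epsilon)$-multiplicative once $\delta$ is small enough relative to $|F|$ and $\epsilon$.

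Next I would construct the finite approximating graph $B$. The natural choice is to take $B$ to be the induced subgraph of $\Gamma$ on the finite vertex set $B_0 := \bigcup_{a \in A,\, v \in W} \alpha(a^{-1}) v \subset V$ (using that $A$, $W$ are finite). For each $s \in A$, define $\pi_s: (W, E|_{W \times W}) \to B$ by $\pi_s(v) = \alpha(s^{-1}) v$. Since $\alpha(s^{-1})$ is a graph automorphism of $\Gamma$, its restriction to $W$ is automatically a graph embedding into $\Gamma$, and its image lies in $B_0$ by construction; hence each $\pi_s$ is a graph embedding into $B$. This is the crucial point the authors flag: injectivity alone would suffice for soficity of the action on the vertex set in the sense of \cite{gao2024soficity}, but here we get the full edge-preservation for free precisely because $G$ acts by automorphisms and we build $B$ as an induced subgraph of $\Gamma$.

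It then remains to check the compatibility relation $\pi_{\varphi(g)s}(v) = \pi_s(\alpha(g^{-1})v)$ on a large subset $S \subset A$. Let $S = \bigcap_{g \in F} (A \cap g^{-1}A)$; by Følner-invariance, $|S| > (1 - |F|\delta)|A| > (1-\epsilon)|A|$ for $\delta$ small. For $s \in S$ and $g \in F$ with $\varphi(g)s \in S$, we have $\varphi(g)s = gs$ (since $s \in A \cap g^{-1}A$), and then for any $v \in W$ with $\alpha(g^{-1})v \in W$,
\[
\pi_{\varphi(g)s}(v) = \pi_{gs}(v) = \alpha((gs)^{-1})v = \alpha(s^{-1})\alpha(g^{-1})v = \pi_s(\alpha(g^{-1})v),
\]
as required. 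Choosing $\delta$ small enough (explicitly, $\delta < \epsilon/(2|F| + 2)$ or similar) makes all of the above estimates hold simultaneously, so $\varphi$ is the desired unital, $(F,\epsilon)$-multiplicative, $(F,W,\epsilon)$-orbit approximation, proving $\alpha$ is sofic. The only mild obstacle is bookkeeping: ensuring a single $\delta$ controls multiplicativity, the size of $S$, and unitality at once, and making sure the arbitrary extensions of $\varphi(g)$ off $A \cap g^{-1}A$ do not spoil the count on $S$—but since those extensions never enter the orbit-approximation relation (which is only required when $\varphi(g)s \in S$, forcing $gs \in A$), this causes no trouble.
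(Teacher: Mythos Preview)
Your proof is correct and follows essentially the same approach as the paper: use a F{\o}lner set $A$, define $\varphi(g)$ by left multiplication where possible, take $B$ to be the induced subgraph of $\Gamma$ on $\alpha(A^{-1})W$, and set $\pi_s(v) = \alpha(s^{-1})v$, which is automatically a graph embedding because $\alpha(s^{-1}) \in \Aut(\Gamma)$. One small bookkeeping point: to get $(F,\epsilon)$-multiplicativity you need the F{\o}lner set to be invariant under $F \cdot F$, not just $F$ (since $\varphi(gh)a = \varphi(g)\varphi(h)a$ requires $gha \in A$ with $gh \in F^2$); the paper handles this by taking $A$ to be $(F', \epsilon')$-invariant for $F' = F \cdot F$.
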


\begin{proof}
    Let $G$ be the acting group, $\Gamma=(V,E)$ the graph, and $\alpha$ the action. Fix finite subsets $F\subset G,$ $W\subset V$, and $\ee>0.$ Assume without loss of generality $F$ is symmetric and contains the identity of $G.$ Let $F' = F\cdot F$ and $\ee' = \frac{\ee}{2|F|}$. Since $G$ is amenable, there is a finite set $A\subset G$ such that $|A \vartriangle gA| < \ee'|A|$ for all $g\in F'.$ We now construct a unital, $(F,\ee)$-multiplicative, $(F,W,\ee)$-orbit approximation $\varphi$ of $\alpha.$ 

    For $g\in G,$ choose $\varphi(g)\in\Sym(A)$ to be any element such that $\varphi(g)a=ga$ whenever $a, ga\in A.$ It follows that $\varphi$ is unital. We now claim it is $(F,\ee)$-multiplicative. Indeed, let $g,h\in F$. For $a\in A$ such that $ha, gha\in A$, by definition we have $\varphi(g)\varphi(h)a = \varphi(gh)a.$ So, since $h^{-1}, h^{-1}g^{-1} \in F\cdot F$,
    \begin{align*}
        |A| \cdot d(\varphi(g)\varphi(h),\varphi(gh)) &\leq |\{a\in A : ha\not\in A\}| + |\{a\in A:gha\not\in A\}|\\
        &\leq |A \vartriangle h^{-1} A| + |A\vartriangle h^{-1}g^{-1}A|\\
        &< 2\ee'|A|\\
        &\leq \ee|A|.
    \end{align*}
    That is, $d(\varphi(g)\varphi(h),\varphi(gh))<\ee.$

    We now show $\varphi$ is a $(F,W,\ee)$-orbit approximation of $\alpha.$ Let $S = \{s\in A : gs\in A \ \forall g\in F\}$. Since $F$ is symmetric, $S = \cap_{g\in F}(A\cap gA)$. Since $|A\cap gA| > (1-\ee')|A|,$ we have $|S| > (1-|F|\ee')|A| > (1-\ee)|A|.$ Now let $B = (V_B,E_B)$ where $V_B = \alpha(A^{-1})\cdot W$ and $E_B = E|_{V_B\times V_B}$. Define $\pi_s : W \to V_B$ by $\pi_s(v) = \alpha(s^{-1})v.$ Each $\pi_s$ is clearly a graph embedding since $\alpha(s^{-1}) \in \Aut(\Gamma)$ and $B$ is a subgraph of $\Gamma$ containing all vertices in $\alpha(s^{-1})W$ and all edges between such vertices. 

    We also have, for all $s\in S,$ $g\in F,$ and $v\in W$ such that $\varphi(g)s\in S$ and $\alpha(g^{-1})v\in W,$
    $$\pi_{\varphi(g)s}(v) = \pi_{gs}(x) = \alpha(s^{-1}g^{-1})v = \alpha(s^{-1})\alpha(g^{-1})v = \pi_s(\alpha(g^{-1})v),$$
    where we used the fact that, since $gs\in A,$ $\varphi(g)s=gs.$ This concludes the proof.
    
\end{proof}

\begin{rem}\label{finite-graph-rem}
    Together with item 1 of Proposition \ref{permanence_simple}, this implies the action of any group on a finite graph is sofic.
\end{rem}


We now present two main results. The first is that any action of a free group on a graph is sofic. This proof requires a graph theoretic result due to Hrushovski \cite{hrushovski1992extending}. The second is a proof proving soficity of actions with amenable stabilizers. This is a substantial generalization of Theorem 2.14 in \cite{gao2024soficity} and can be considered an optimal result in this setting. 

We begin with actions of free groups. The following result is a direct consequence of \cite{hrushovski1992extending}. We thank Henry Wilton for pointing us to this reference.

\begin{lem}\label{hru-lemma}
    Let $\Gamma = (V, E)$ be a graph, $\alpha_1, \cdots, \alpha_n \in \Aut(\Gamma)$ be finitely many automorphisms of $\Gamma$, $\Gamma_0 = (V_0, E_0) \subset \Gamma$ be a finite induced subgraph. Then there exists a finite graph $\Gamma' = (V', E')$, a graph embedding $\pi: \Gamma_0 \to \Gamma'$, and automorphisms $\phi_1, \cdots, \phi_n \in \Aut(\Gamma')$ s.t., if for some $i$ and $v \in F_0$, we have $\alpha_i(v) \in F_0$, then $\pi(\alpha_i(v)) = \phi_i(\pi(v))$.
\end{lem}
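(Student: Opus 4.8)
The plan is to deduce this directly from Hrushovski's extension theorem for partial automorphisms. Recall the statement of that theorem: given a finite structure and finitely many partial isomorphisms between substructures of it, one can embed the structure into a larger finite structure on which all the partial isomorphisms extend to genuine automorphisms. The original formulation is for graphs (indeed for relational structures more generally), so it applies verbatim to our setting.

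The key step is to set up the right finite structure and partial maps. I would first pass to a finite induced subgraph $\Delta = (V_1, E_1)$ of $\Gamma$ large enough to contain $V_0$ together with $\alpha_i(V_0) \cap V$... — but wait, the point is $\alpha_i(V_0)$ may not lie in $V_1$, so instead let $V_1 = V_0 \cup \bigcup_{i=1}^n \alpha_i(V_0) \cup \bigcup_{i=1}^n \alpha_i^{-1}(V_0)$, a finite vertex set, and let $\Delta$ be the induced subgraph of $\Gamma$ on $V_1$. For each $i$, define $p_i$ to be the partial map on $V_1$ with domain $\{v \in V_1 : \alpha_i(v) \in V_1\}$ sending $v \mapsto \alpha_i(v)$. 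Since $\alpha_i$ is a graph automorphism of $\Gamma$ and $\Delta$ is an \emph{induced} subgraph, $p_i$ is a partial isomorphism of $\Delta$ (it preserves and reflects edges between vertices of its domain and range, because $\alpha_i$ does so in $\Gamma$). Now apply Hrushovski's theorem to $\Delta$ and the partial isomorphisms $p_1, \dots, p_n$: we obtain a finite graph $\Gamma' = (V', E')$, an embedding $\iota: \Delta \hookrightarrow \Gamma'$, and automorphisms $\phi_1, \dots, \phi_n \in \Aut(\Gamma')$ with $\phi_i \circ \iota = \iota \circ p_i$ on $\mathrm{dom}(p_i)$. Set $\pi = \iota|_{V_0}$.

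It remains to verify the compatibility condition. Suppose $v \in V_0$ and $\alpha_i(v) \in V_0$ for some $i$. Then $v \in V_1$ and $\alpha_i(v) \in V_1$, so $v \in \mathrm{dom}(p_i)$ and $p_i(v) = \alpha_i(v)$. Hence $\phi_i(\pi(v)) = \phi_i(\iota(v)) = \iota(p_i(v)) = \iota(\alpha_i(v)) = \pi(\alpha_i(v))$, since $\alpha_i(v) \in V_0$. This is exactly the required identity. (I note the statement writes $F_0$ where it presumably means $V_0$; I read it that way.)

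The main obstacle is not conceptual but bookkeeping: one must be careful that enlarging $V_0$ to $V_1$ does not break anything — specifically that $p_i$ really is a partial \emph{isomorphism} and not merely a partial homomorphism, which is where "induced subgraph" is essential, and that the conclusion only claims the identity $\pi(\alpha_i(v)) = \phi_i(\pi(v))$ when both $v$ and $\alpha_i(v)$ lie in $V_0$ (not in the larger $V_1$), so no extra hypotheses leak in. One should also double-check the precise hypotheses in \cite{hrushovski1992extending} — it requires the partial maps to be isomorphisms between \emph{substructures}, and in the graph case a substructure of an induced subgraph is again an induced subgraph of $\Gamma$, so the setup is legitimate.
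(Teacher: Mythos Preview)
Your proposal is correct and matches the paper's approach: the paper simply asserts the lemma is ``a direct consequence of \cite{hrushovski1992extending}'' without further argument, and you have supplied exactly the routine details of that deduction. (A minor simplification: the enlargement to $V_1$ is harmless but unnecessary --- one can apply Hrushovski's theorem directly to $\Gamma_0$ with partial maps $p_i$ having domain $\{v \in V_0 : \alpha_i(v) \in V_0\}$, since the lemma's conclusion only concerns pairs $v, \alpha_i(v)$ already in $V_0$.)
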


\begin{thm}\label{free group actions all}
    Any action of a free group on a graph is sofic.
\end{thm}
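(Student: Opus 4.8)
The plan is to combine Lemma~\ref{hru-lemma} with the ultraproduct characterization from Proposition~\ref{ultra-prod}. Fix a free group $F = F(X)$ acting on a graph $\Gamma = (V,E)$ via $\alpha$. We build, for each choice of finite $F_0 \subset F$, $W \subset V$, and $\epsilon > 0$, a unital, $(F_0, \epsilon)$-multiplicative $(F_0, W, \epsilon)$-orbit approximation; passing to an ultraproduct as in the $(\Rightarrow)$ direction of Proposition~\ref{ultra-prod} is not even needed — it suffices to directly produce finite data. The point is that for free groups, soficity of the \emph{group} is automatic (free groups are residually finite, hence sofic), so the only genuinely new content is the graph-embedding condition. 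Here is where Hrushovski's lemma enters: it lets us realize a finite piece of $\Gamma$ together with the action of finitely many generators inside a finite graph equipped with honest automorphisms.

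The key steps, in order. First, let $a_1, \dots, a_k$ be the generators of $F$ appearing in the (reduced) words in $F_0$, so every $g \in F_0$ is a word of length $\le L$ in $a_1^{\pm 1}, \dots, a_k^{\pm 1}$. Enlarge $W$ to $W' = \bigcup_{g \in F_0} \alpha(g^{-1}) W \cup W$, still finite, and let $\Gamma_0 = (W', E|_{W' \times W'})$ be the induced finite subgraph. Apply Lemma~\ref{hru-lemma} to $\Gamma_0$ with the automorphisms $\alpha(a_1), \dots, \alpha(a_k)$: we obtain a finite graph $\Gamma' = (V', E')$, a graph embedding $\pi\colon \Gamma_0 \hookrightarrow \Gamma'$, and automorphisms $\phi_1, \dots, \phi_k \in \Aut(\Gamma')$ such that $\phi_i$ agrees with $\alpha(a_i)$ through $\pi$ wherever both are defined on $W'$. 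Second, since $\phi_1, \dots, \phi_k$ are arbitrary permutations-with-graph-structure, we use residual finiteness of $F$ (or directly the universal property of the free group) to choose a finite quotient $q\colon F \to Q$ such that $q$ is injective on the ball of radius $2L$ in $F$; then $F$ acts on $\Gamma'$ through... wait — the $\phi_i$ need not generate an action of $F$ on $\Gamma'$ respecting relations, but $F$ being free means \emph{any} assignment $a_i \mapsto \phi_i$ extends to a homomorphism $\beta\colon F \to \Aut(\Gamma')$. Take $A = V'$, and $\varphi\colon F \to \Sym(V')$ to be $\varphi(g) = \beta(g)$ viewed as a permutation of $V'$. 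This $\varphi$ is a genuine homomorphism, hence unital and $(F_0, 0)$-multiplicative.

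Third, we verify the orbit-approximation condition. Set $B = \Gamma'$ and, crucially, we need a large subset $S \subset V'$; here the natural choice is to take $S = V'$ if $\pi$ behaves well everywhere, but in general we should thicken by translating the embedded copy: for $s \in V'$ define $\pi_s\colon W \to \Gamma'$ by $\pi_s(v) = \beta(?)$... the cleanest route is to note $\Gamma'$ being finite means $F$ acts on it, so actually we should mimic Theorem~\ref{amenable actions all}: take $A = V'$ with the $F$-action $\beta$, let $S = \{s \in V' : \beta(g) s \text{ well-related for all } g\}$ — but since $\beta$ is a genuine action there is no loss, and we can even take $S = V'$ if we define $\pi_s(v) = \beta(\text{suitable element})(\pi(v))$. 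Concretely, pick a base point and set $\pi_s(v)$ by transporting $\pi(v)$ along an element of $F$ carrying the base point to $s$; the compatibility $\pi_{\varphi(g)s}(v) = \pi_s(\alpha(g^{-1})v)$ then follows from $\beta$ being a homomorphism extending $a_i \mapsto \phi_i$ together with Hrushovski's matching property $\phi_i \circ \pi = \pi \circ \alpha(a_i)$ on $W'$, pushed up to words of length $\le L$ (this is where we need all intermediate translates $\alpha(g^{-1})v$ to lie in $W'$, which holds by our choice of $W'$). That each $\pi_s$ is a graph embedding is immediate since $\pi$ is one and $\beta(\cdot)$ are graph automorphisms.

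The main obstacle will be step three: making the passage from the single generator-level matching in Hrushovski's lemma to matching along arbitrary words $g \in F_0$ of length up to $L$, since Lemma~\ref{hru-lemma} only guarantees agreement when a \emph{single generator} maps a vertex of $F_0$ back into $F_0$. The fix is exactly the enlargement $W \rightsquigarrow W'$ to a set closed under the relevant partial translations, so that a word $g = a_{i_1}^{\pm 1} \cdots a_{i_m}^{\pm 1}$ applied to $v \in W$ with $\alpha(g^{-1})v \in W$ passes through intermediate vertices all lying in $W'$ — but one must be slightly careful that $W'$ as defined above need not be closed under single generators in the required way, so in fact one should iterate the enlargement $L$ times, $W^{(0)} = W$, $W^{(j+1)} = W^{(j)} \cup \bigcup_i \alpha(a_i^{\pm1}) W^{(j)}$, and take $W' = W^{(L)}$. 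With that closure in hand, induction on word length transfers the generator-level matching to all of $F_0$, and the proof goes through.
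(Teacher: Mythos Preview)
Your overall strategy matches the paper's: reduce to finitely many generators, enlarge $W$ to a finite $W'$ containing all intermediate translates along words in $F_0$, apply Lemma~\ref{hru-lemma} to get a finite graph $\Gamma'$ with automorphisms $\phi_i$, use freeness to extend $a_i \mapsto \phi_i$ to a homomorphism $\beta\colon F \to \Aut(\Gamma')$, and then push the single-generator matching up to words by induction on length. Your diagnosis of the ``main obstacle'' and its fix (iterating the enlargement $L$ times so every prefix lands in $W'$) is exactly right.

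The genuine gap is in your step three, where you take $A = V'$ (the vertex set) and try to define $\pi_s$ by ``transporting $\pi(v)$ along an element of $F$ carrying the base point to $s$.'' This fails on two counts. First, the $\beta$-action on $V'$ need not be transitive, so for many $s$ no such transporting element exists; restricting $S$ to a single orbit gives no control on $|S|/|V'|$. Second, even where a transporting element $g_s$ exists it is not unique, and two choices differ by an element of $\Stab_\beta(s_0)$ which can act nontrivially on $\pi(W)$; this makes $\pi_s$ ill-defined and breaks the compatibility $\pi_{\varphi(g)s}(v) = \pi_s(\alpha(g^{-1})v)$, since $g_{\beta(g)s}$ and $gg_s$ both carry $s_0$ to $\beta(g)s$ but need not induce the same automorphism.

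The paper's fix is exactly to replace the vertex set by the automorphism group: take $A = \Aut(\Gamma')$, let $\varphi(g)$ act on $A$ by left multiplication by $\beta(g)$, set $S = A$, and define $\pi_s(v) = s^{-1}(\pi(v))$. Now each $s$ \emph{is} a graph automorphism, so $\pi_s$ is automatically a graph embedding, and compatibility is a one-line computation: $\pi_{\varphi(g)s}(v) = (\beta(g)s)^{-1}\pi(v) = s^{-1}\beta(g^{-1})\pi(v) = s^{-1}\pi(\alpha(g^{-1})v) = \pi_s(\alpha(g^{-1})v)$, the middle equality being precisely your word-length induction through $W'$. With this change of $A$, everything else in your outline goes through.
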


\begin{proof}
    By item 3 of Proposition \ref{permanence_simple}, we may assume the acting group $G$ is a finitely generated free group, i.e., $G = \mathbb{F}_n$ and we write $g_1, \cdots, g_n$ for the free generators of $G$. We denote the graph being acted on as $\Gamma = (V, E)$ and the action be $\alpha: G \to \Aut(\Gamma)$. Let $\alpha_i = \alpha(g_i) \in \Aut(\Gamma)$ for $1 \leq i \leq n$. Let $F \subset G$, $W \subset V$ be finite subsets, $\epsilon > 0$. Let $F^{-1} = \{f_1, \cdots, f_m\}$.
    
    We write $f_j = g_{i_{j, 1}}^{\epsilon_{j, 1}} \cdots g_{i_{j, k_j}}^{\epsilon_{j, k_j}}$, where $\epsilon$’s are in $\{\pm 1\}$. Let $W' \subset V$ be a finite set containing $W$ and all elements of $V$ of the form $\alpha(g_{i_{j, l}}^{\epsilon_{j, l}} \cdots g_{i_{j, k_j}}^{\epsilon_{j, k_j}})v = \alpha_{i_{j, l}}^{\epsilon_{j, l}} \cdots \alpha_{i_{j, k_j}}^{\epsilon_{j, k_j}}v$ for all $v \in W$, $1 \leq j \leq m$, $1 \leq l \leq k_j$. Then by Lemma \ref{hru-lemma}, there exists a finite graph $B$, a graph embedding $\pi: (W', E|_{W' \times W'}) \hookrightarrow B$, and automorphisms $\phi_1, \cdots, \phi_n \in \Aut(B)$ s.t. $\pi(\alpha_i(v)) = \phi_i(\pi(v))$ whenever $v \in W'$, $1 \leq i \leq n$, and $\alpha_i(v) \in W'$. Clearly, this also implies $\pi(\alpha_i^{-1}(v)) = \phi_i^{-1}(\pi(v))$ whenever $v \in W'$, $1 \leq i \leq n$, and $\alpha_i^{-1}(v) \in W'$. By sending $g_i$ to $\phi_i$, we have a group homomorphism $\psi: G \to \Aut(B)$, which induces another group homomorphism $\varphi: G \to \Sym(\Aut(B))$ given by $\varphi(g)s = \psi(g)s$.

    Note that $\Aut(B)$ is a finite set. Since $\varphi$ is a group homomorphism, it is clearly unital and $(F, \epsilon)$-multiplicative. Thus, it suffices to show it is an $(F, W, \epsilon)$-orbit approximation of $\alpha$. Let $S = \Aut(B)$. For each $s \in S$, let $\pi_s: (W, E|_{W \times W}) \hookrightarrow B$ be given by $\pi_s(v) = s^{-1}(\pi(v))$, which is clearly a graph embedding. Now, for $s \in S$, $g \in F$, $v \in W$, we have $\pi_{\varphi(g)s}(v) = \pi_{\psi(g)s}(v) = s^{-1}\psi(g^{-1})\pi(v)$. Then $g^{-1} = f_j = g_{i_{j, 1}}^{\epsilon_{j, 1}} \cdots g_{i_{j, k_j}}^{\epsilon_{j, k_j}}$ for some $j$, so,
    \begin{equation*}
        \psi(g^{-1})\pi(v) = \phi_{i_{j, 1}}^{\epsilon_{j, 1}} \cdots \phi_{i_{j, k_j}}^{\epsilon_{j, k_j}}\pi(v) = \phi_{i_{j, 1}}^{\epsilon_{j, 1}} \cdots \phi_{i_{j, k_j - 1}}^{\epsilon_{j, k_j - 1}}\pi(\alpha_{i_{j, k_j}}^{\epsilon_{j, k_j}}v) = \cdots = \pi(\alpha_{i_{j, 1}}^{\epsilon_{j, 1}} \cdots \alpha_{i_{j, k_j}}^{\epsilon_{j, k_j}}v) = \pi(\alpha(g^{-1})v)
    \end{equation*}
    
    Thus, whenever $\alpha(g^{-1})v \in E$, we have $\pi_{\varphi(g)s}(v) = s^{-1}\psi(g^{-1})\pi(v) = s^{-1}\pi(\alpha(g^{-1})x) = \pi_s(\alpha(g^{-1})x)$. This concludes the proof.
\end{proof}

We shall now show that transitive actions of sofic groups with amenable stabilizers are sofic. We thank Brandon Seward for their key suggestion to use Følner tilings and their generosity in allowing us to use their idea here. We need to start with a few lemmas:

Recall the following definition and result from \cite{Downarowicz2015TilingsOA}:

\begin{defn}
    A \emph{tiling} $T$ of a group $G$ consists of:
    \begin{enumerate}
        \item A finite collection $S(T)$ of finite subsets of $G$, each containing the unity $e$ of $G$. Elements of $S(T)$ are called the \emph{shapes} of the tiling;
        \item For each $S \in S(T)$, a set $C(S) \subset G$, called the \emph{center set} for the shape $S$,
    \end{enumerate}
    such that $(Sc)_{S \in S(T), c \in C(S)}$ is a partition of $G$. A \emph{tile} in $T$ shall indicate either $(S, c)$ or $Sc$, for $S \in S(T)$, $c \in C(S)$.
\end{defn}

\begin{lem}[Theorem 4.3 of \cite{Downarowicz2015TilingsOA}]\label{folner-tiling}
    Let $G$ be an infinite amenable group, $K \subset G$ be a finite set, $\epsilon > 0$. Then there exists a tiling $T$ of $G$ s.t. every shape $S$ in $T$ is $(K, \epsilon)$-invariant, i.e., $\frac{|kS \triangle S|}{|S|} \leq \epsilon$ for all $k \in K$. We observe that this implies all tiles of $T$ are $(K, \epsilon)$-invariant.
\end{lem}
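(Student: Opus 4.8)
The plan is to recapitulate the main line of the Ornstein--Weiss quasitiling machinery together with the Downarowicz--Huczek--Zhang tiling theorem (this lemma being Theorem~4.3 of \cite{Downarowicz2015TilingsOA}). The starting point is the classical Ornstein--Weiss quasitiling theorem. Fixing $K$ and $\epsilon$ and a small tolerance $\delta$, one recursively produces finitely many finite sets $T_1 \subseteq T_2 \subseteq \cdots \subseteq T_n$ (with $n$ of order $\log\epsilon/\log(1-\epsilon)$), each containing $e$ and each as left-invariant as desired -- in particular each $(K,\epsilon/3)$-invariant, say -- with the property that every sufficiently invariant finite $F \subseteq G$ admits an $\epsilon$-quasitiling by the $T_i$: there are center sets $C_i$ so that the translates $\{T_i c : 1 \le i \le n,\ c \in C_i\}$ are pairwise $\epsilon$-disjoint and their union misses at most an $\epsilon$-fraction of $F$. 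Running this along a F{\o}lner exhaustion of $G$ and passing to a subsequential limit yields an $\epsilon$-quasitiling of all of $G$.

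The substance of the proof is to promote such a quasitiling to an honest tiling, that is, a genuine partition of $G$ into translates of \emph{finitely many} $(K,\epsilon)$-invariant shapes. A greedy ``disjointification'' -- well-order the placed translates and delete from each one the part already covered by earlier translates -- converts the $\epsilon$-disjoint family into an honestly disjoint one; since at most an $\epsilon$-fraction of each $T_i c$ is deleted, the surviving pieces still have size at least $(1-\epsilon)|T_i|$ and remain $(K,O(\epsilon))$-invariant. Two defects remain: the pieces need not cover $G$, and a priori infinitely many distinct shapes (various subsets of the $T_i$) can occur. Following Downarowicz--Huczek--Zhang, I would remove both defects by iterating the procedure through a hierarchy of scales and carrying out the bookkeeping inside a subshift: fix a finite alphabet $\mathcal{A}$ recording, at each $g \in G$, which of the finitely many fixed shapes and which offset within that shape $g$ lies in, or a ``hole'' symbol. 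The set of partial tilings of $G$ is then a closed, translation-invariant subset of $\mathcal{A}^G$; the quasitiling theorem shows it contains configurations of arbitrarily small hole density, and the local extension property furnished again by Ornstein--Weiss, together with a maximality argument inside this compact invariant set, is then used to produce a configuration with no holes. Since $\mathcal{A}$ is finite, such a configuration is by definition a tiling with finitely many shapes; all of them are $(K,\epsilon)$-invariant by construction. The closing assertion of the statement is immediate: a tile has the form $Sc$ with $S$ a shape, so $|kSc \,\triangle\, Sc| = |kS \,\triangle\, S| \le \epsilon|S| = \epsilon|Sc|$ for every $k \in K$.

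I expect the second paragraph to be where essentially all the difficulty lies. Ornstein--Weiss is classical but only ever delivers quasitilings, carrying both overlaps and holes; extracting from them an exact partition of the \emph{whole} group using only \emph{finitely many} shapes, each still $(K,\epsilon)$-invariant, is the entire content of the Downarowicz--Huczek--Zhang theorem, and the delicate point is the hierarchical/compactness argument that simultaneously eliminates the holes and bounds the number of shapes. The quasitiling input, the disjointification, and the final observation about tiles are all routine by comparison.
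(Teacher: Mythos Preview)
The paper does not prove this lemma at all; it simply quotes it verbatim as Theorem~4.3 of \cite{Downarowicz2015TilingsOA} and uses it as a black box. Your proposal therefore goes well beyond what the paper does: you are sketching the actual Downarowicz--Huczek--Zhang argument, whereas the paper is content to cite it. Your outline of that argument (Ornstein--Weiss quasitilings, disjointification, then a subshift/compactness maximality argument to close the holes while keeping finitely many shapes) is broadly faithful to the structure of the original proof in \cite{Downarowicz2015TilingsOA}, and your closing observation that $(K,\epsilon)$-invariance passes from shapes $S$ to tiles $Sc$ is exactly the trivial remark the paper appends to the statement.
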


\begin{cor}\label{folner-seq-tiled}
    Let $G$ be an infinite amenable group. Fix a tiling $T$ of $G$. Then there exists a Følner sequence $A_n$ of $G$ s.t. $|A_n| \to \infty$ and each $A_n$ is a union of tiles in $T$.
\end{cor}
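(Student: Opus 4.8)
The plan is to take an arbitrary Følner sequence of $G$ and enlarge each of its sets to the smallest union of tiles containing it; the only thing to check is that this enlargement is negligible in size, so that the Følner property is preserved.

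First I would isolate the one finiteness input coming from the tiling. Since $S(T)$ is a finite collection of finite subsets of $G$, the set $D := \{e\} \cup \bigcup_{S \in S(T)} (S \cup S^{-1})$ is a finite symmetric subset of $G$ containing $e$. The key geometric observation is that every tile is ``$D^2$-small'' in the sense that if a tile $Sc$ (with $S \in S(T)$, $c \in C(S)$) contains a point $g$, then $c = s^{-1}g$ for some $s \in S$, so $c \in Dg$ and hence $Sc \subseteq Dc \subseteq D^2 g$. Consequently, for any $B \subseteq G$, the union of all tiles of $T$ that meet $B$ is contained in $D^2 B$.

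Next I would fix a Følner sequence $(F_n)$ of $G$ with $|F_n| \to \infty$ (such a sequence exists since $G$ is infinite amenable), and, using that the tiles partition $G$, set $A_n$ to be the union of all tiles of $T$ intersecting $F_n$. Then $A_n$ is a union of tiles and $F_n \subseteq A_n$, so $|A_n| \geq |F_n| \to \infty$; and by the observation above $A_n \subseteq D^2 F_n$, whence $A_n \setminus F_n \subseteq D^2 F_n \setminus F_n$.

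Finally I would verify the Følner condition for $(A_n)$. Since $D^2$ is a fixed finite set containing $e$ and $(F_n)$ is Følner, $|D^2 F_n \setminus F_n| \leq \sum_{g \in D^2} |g F_n \setminus F_n|$ is $o(|F_n|)$, so $|A_n \triangle F_n| = |A_n \setminus F_n| = o(|F_n|)$. Then for each $g \in G$, the triangle inequality for symmetric differences together with the translation-invariance of cardinality gives
\[
|g A_n \triangle A_n| \leq |g A_n \triangle g F_n| + |g F_n \triangle F_n| + |F_n \triangle A_n| = 2|A_n \setminus F_n| + |g F_n \triangle F_n|,
\]
and dividing by $|A_n| \geq |F_n|$ yields $|g A_n \triangle A_n| / |A_n| \to 0$. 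Hence $(A_n)$ is the desired Følner sequence. I do not expect a genuine obstacle here; the one point requiring care is that the rounding-up to a union of tiles be controlled uniformly in $n$, and this is precisely what the boundedness of the (finitely many, finite) shapes supplies through the single finite set $D$.
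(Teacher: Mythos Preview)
Your proof is correct and follows essentially the same approach as the paper's: take a F\o lner set, enlarge it to the union of all tiles meeting it, and show that the enlargement is negligible because the finitely many shapes have bounded size. Your bound $A_n \subseteq D^2 F_n$ is slightly slicker than the paper's version, which instead counts the number of tiles that meet but are not contained in the F\o lner set and multiplies by the maximal tile size; both arguments exploit exactly the same finiteness of the shapes.
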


\begin{proof}
    Fix an increasing sequence of finite sets $F_n \subset G$ with $\cup_n F_n = G$. For each $n$, we shall construct a finite set $A_n$ s.t. $|A_n| \geq n$, $\frac{|gA_n \triangle A_n|}{|A_n|} \leq \frac{1}{n}$ for all $g \in F_n$, and $A_n$ is a union of tiles in $T$. Then such $A_n$ clearly satisfies the desired conclusion.
    
    Let $\{S_1, \cdots, S_n\}$ be the shapes of $T$. Let $M = \max\{|S_1|, \cdots, |S_n|\}$. Let,
    \begin{equation*}
        E = (\cup_{i=1}^n S_iS_i^{-1}) \cup F_n
    \end{equation*}
    which is a finite set. Let $\epsilon = \frac{1}{2nM|E| + n}$. Since $G$ is an infinite amenable group, there exists $B$ which is $(E, \epsilon)$-invariant and furthermore $|B| \geq n$. Let $A_n$ be the union of all tiles intersecting $B$. Since the tiles are all finite and they form a partition of $G$, we observe that $|A_n| \geq |B| \geq n$ and $A_n$ is finite.

    We now calculate an upper bound on $|A_n \setminus B|$. Observe that, if $s \in B$ belongs to a tile $t \in T$ and $Es \subset B$, then as $t$ is of the form $S_ic$ for some $1 \leq i \leq n$, $c \in C(S_i)$, and as $\cup_{i=1}^n S_iS_i^{-1} \subset E$, we have $t \subset Es \subset B$. So any tile that intersects $B$ but is not fully contained in $B$ must have its intersection with $B$ consisting only of elements in $\{s \in B: Es \not\subset B\}$. Since $B$ is $(E, \epsilon)$-invariant, we have,
    \begin{equation*}
        |\{s \in B: Es \not\subset B\}| \leq \epsilon|E||B|
    \end{equation*}

    Thus, there are at most $\epsilon|E||B|$ tiles that intersect $B$ but are not fully contained in $B$. Thus,
    \begin{equation*}
        |A_n \setminus B| \leq M\epsilon|E||B|
    \end{equation*}

    But then for any $g \in F_n$, as $F_n \subset E$ and $B$ is $(E, \epsilon)$-invariant,
    \begin{equation*}
    \begin{split}
        |gA_n \triangle A_n| &= |gA_n \setminus A_n| + |A_n \setminus gA_n|\\
        &\leq |gB \setminus A_n| + |gA_n \setminus gB| + |B \setminus gA_n| + |A_n \setminus B|\\
        &\leq |gB \setminus B| + |B \setminus gB| + 2|A_n \setminus B|\\
        &\leq \epsilon|B| + 2M\epsilon|E||B|\\
        &\leq (2M|E| + 1)\epsilon|A_n|\\
        &= \frac{|A_n|}{n}
    \end{split}
    \end{equation*}
\end{proof}

If $G$ is a sofic group, by a sequence $\varphi_n: G \to \Sym(A_n)$ being a \emph{sequence of sofic approximations}, we meant that $\varphi_n$ are all unital, and, for each finite subset $F \subset G$ and $\epsilon > 0$, there exists $N > 0$ s.t., whenever $n \geq N$, we have $\varphi_n$ is $(F, \epsilon)$-multiplicative and $d(\varphi_n(g), 1) > 1 - \epsilon$ for all $g \in F \setminus \{1_G\}$.

We thank Ben Hayes for first suggesting the idea behind the following lemma:

\begin{lem}\label{sofic-approx-extn}
    Let $G$ be a sofic group, $H \leq G$ be an amenable subgroup, $\psi_n: H \to \Sym(A_n)$ be a sequence of sofic approximations with $|A_n| \to \infty$. Then there exists a sequence of strictly increasing natural numbers $\{n_k\}_k$ and a sequence of sofic approximations $\varphi_k: G \to \Sym(A_{n_k})$ extending $\psi_{n_k}$.
\end{lem}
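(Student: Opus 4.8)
The plan is to combine the soficity of $G$ with the rigidity theorem of Elek--Szabó \cite{elekszabosofic}, which asserts that any two sofic approximations of an amenable group are conjugate (in the appropriate asymptotic sense). First I would fix an enumeration $F_1 \subset F_2 \subset \cdots$ of finite subsets of $G$ with union $G$, and a sequence $\epsilon_k \searrow 0$. Since $G$ is sofic, for each $k$ I can choose a unital $(F_k, \epsilon_k)$-multiplicative map $\sigma_k \colon G \to \Sym(C_k)$ with $d(\sigma_k(g), 1) > 1 - \epsilon_k$ for all $g \in F_k \setminus \{1_G\}$. Restricting $\sigma_k$ to $H$ gives a sofic approximation of $H$ on $C_k$; the point is to modify it so that its underlying set becomes $A_{n_k}$ and its restriction to $H$ becomes exactly $\psi_{n_k}$.

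The key step is to invoke the Elek--Szabó conjugacy theorem to compare the two sofic approximations $\{\sigma_k|_H\}_k$ and $\{\psi_n|_H\}_n$ of the amenable group $H$. After passing to a subsequence $n_k$ chosen so that $|A_{n_k}|$ and $|C_k|$ are comparable (here one uses $|A_n| \to \infty$ together with the standard trick of inflating a sofic approximation by a direct product with a trivial action on an auxiliary finite set, so that one may assume $|A_{n_k}| = |C_k|$ exactly, up to an error that vanishes in the limit and can be absorbed), the conjugacy theorem provides bijections $\theta_k \colon C_k \to A_{n_k}$ such that $\theta_k \circ \sigma_k(h) \circ \theta_k^{-1}$ agrees with $\psi_{n_k}(h)$ outside a set of density tending to $0$, for each $h \in H$. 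I would then \emph{define} $\varphi_k \colon G \to \Sym(A_{n_k})$ by first conjugating $\sigma_k$ by $\theta_k$, and then surgically correcting the values on $H$: set $\varphi_k(h) = \psi_{n_k}(h)$ for $h$ in a suitable finite exhaustion of $H$, while keeping the conjugated $\sigma_k$ on a set of representatives of the remaining cosets. One must be slightly careful to keep $\varphi_k$ well-defined as a map on all of $G$ (not a homomorphism — it need not be) and unital; this is arranged by using a transversal of $H$ in $G$ and letting $\varphi_k(ht) = \varphi_k(h)\varphi_k(t)$ only on the finite window under consideration, exactly as in the proof that soficity passes to group extensions.

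The verification that $\{\varphi_k\}_k$ is a sequence of sofic approximations of $G$ extending $\psi_{n_k}$ is then a routine but somewhat bookkeeping-heavy estimate: for a fixed finite $F \subset G$ and $\delta > 0$, all $g, h \in F$ lie in $F_k$ for large $k$, the conjugation $\theta_k$ preserves the normalized Hamming distance exactly, the correction on $H$ changes $\varphi_k$ only on a set of density $o(1)$, and $\psi_{n_k}$ is itself asymptotically multiplicative and injective on finite subsets of $H$; combining these with $\epsilon_k \to 0$ gives $(F, \delta)$-multiplicativity and $d(\varphi_k(g), 1) > 1 - \delta$ for $g \in F \setminus \{1_G\}$ once $k$ is large. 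I expect the main obstacle to be the correct formulation and application of the Elek--Szabó conjugacy statement in this quantitative, subsequential form — in particular matching cardinalities $|A_{n_k}| = |C_k|$ and controlling the density of the set where the conjugated map and $\psi_{n_k}$ disagree uniformly over the relevant finite subset of $H$ — together with the care needed to splice the corrected map on $H$ with the ambient map on a transversal without destroying approximate multiplicativity. Once that matching is set up cleanly, the rest is the standard extension-of-soficity argument applied along the chosen subsequence.
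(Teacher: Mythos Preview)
Your overall strategy is correct and matches the paper's: inflate a sofic approximation of $G$ to live on the sets $A_{n_k}$, invoke Elek--Szab\'o conjugacy for the amenable subgroup $H$ to align the restriction with $\psi_{n_k}$, then overwrite the values on $H$. The cardinality matching works exactly as you describe: choose $n_k$ with $|A_{n_k}| \geq k|B_k|$, write $A_{n_k} = (B_k \times Q_k) \sqcup R_k$ with $|R_k| < |B_k|$, and let the inflated map act trivially on $R_k$; since $|R_k|/|A_{n_k}| \to 0$ this is still a sofic approximation.

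Where you overcomplicate things is the splicing step. No transversal of $H$ in $G$ is needed, and you should not invoke the extension-of-soficity machinery. The paper simply sets
\[
\varphi_k(g) = \begin{cases} s_k\,\varphi_k''(g)\,s_k^{-1}, & g \notin H,\\ \psi_{n_k}(g), & g \in H,\end{cases}
\]
where $s_k \in \Sym(A_{n_k})$ is the Elek--Szab\'o conjugating element. Since $d(\psi_{n_k}(h),\, s_k\varphi_k''(h)s_k^{-1}) \to 0$ for every $h \in H$, this piecewise map differs from the genuinely multiplicative-in-the-limit map $g \mapsto s_k\varphi_k''(g)s_k^{-1}$ by $o(1)$ on any fixed finite set, so approximate multiplicativity and freeness are immediate. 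Note also that the correction must be made on \emph{all} of $H$, not merely on a finite exhaustion: the conclusion asks for $\varphi_k|_H = \psi_{n_k}$ exactly for each $k$, and the piecewise definition above gives this directly. Your transversal construction $\varphi_k(ht) = \varphi_k(h)\varphi_k(t)$ could be made to work, but it introduces bookkeeping (tracking how products of transversal elements decompose) that the direct piecewise definition avoids entirely.
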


\begin{proof}
    Since $G$ is sofic, fix a sequence of sofic approximations $\varphi_k': G \to \Sym(B_k)$ of $G$. Choose $\{n_k\}_k$ s.t. $|A_{n_k}| \geq k|B_k|$. Then we may write $A_{n_k} = (B_k \times Q_k) \sqcup R_k$, where $|C_k| \geq k$ and $R_k < |B_k|$. We may then define $\varphi_k'': G \to \Sym(A_{n_k})$ by, for any $g \in G$, $\varphi_k''(g)(b, q) = (\varphi_k'(g)b, q)$ when $(b, q) \in B_k \times Q_k$ and $\varphi_k''(g)r = r$ when $r \in R_k$. Since $\frac{|R_k|}{|A_{n_k}|} < \frac{|B_k|}{k|B_k|} = \frac{1}{k} \to 0$, $\varphi_k''$ is a sequence of sofic approximations for $G$. Then there are two sequences of sofic approximations for $H$ on the sets $A_{n_k}$, namely $\psi_{n_k}$ and $\varphi_k''|_H$. By \cite{elekszabosofic} the two embeddings are conjugate, i.e., there exists $s_k \in \Sym(A_{n_k})$ s.t. $d(\psi_{n_k}(h), s_k\varphi_k''(h)s_k^{-1}) \to 0$ for any $h \in H$. We may then define,
    \begin{equation*}
        \varphi_k(g) = \begin{cases}
            s_k\varphi_k''(g)s_k^{-1} &, \text{ if }g \notin H\\
            \psi_{n_k}(g) &, \text{ if }g \in H
        \end{cases}
    \end{equation*}
    
    It is then easy to verify that $\varphi_k$ satisfies the desired conclusion.
\end{proof}

\begin{lem}\label{finite-stabilizers}
    Let $\alpha:G\curvearrowright\Gamma=(V,E)$ be an action of a sofic group such that $G\curvearrowright V$ is transitive and $\Stab(v)$ is finite for some/all $v\in V.$ Then $\alpha$ is sofic.
\end{lem}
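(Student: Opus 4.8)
The plan is to build sofic approximations of $G$ that are adapted to a point stabilizer. Fix a base vertex $v_0$, set $H=\Stab(v_0)$ (finite by hypothesis), and use transitivity to identify $V$ with $G/H$; choose a section $\sigma\colon V\to G$ with $\sigma(v_0)=1_G$. I would record the edge set through the ``connection set'' $\mathcal{D}=\{g\in G:(v_0,\alpha(g)v_0)\in E\}$. Using $G$-invariance and symmetry of $E$ and the absence of loops, one checks that $\mathcal{D}$ is symmetric, misses $1_G$, and is $H$-bi-invariant ($H\mathcal{D}H=\mathcal{D}$, so $\mathcal{D}$ is a union of the finite double cosets $HgH$), and that $(v,w)\in E$ iff $\sigma(v)^{-1}\sigma(w)\in\mathcal{D}$ for all $v,w\in V$. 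This bi-invariance is the only place finiteness of $H$ is really used and it is the crux of the argument.

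Since $H$ is finite, hence amenable, the genuinely free actions $\psi_n\colon H\to\Sym(H\times[n])$, $\psi_n(h)(h',j)=(hh',j)$, form a sequence of sofic approximations of $H$ with $|H\times[n]|\to\infty$. Feeding these into Lemma~\ref{sofic-approx-extn} I obtain an increasing sequence $\{n_k\}$ and a sequence of sofic approximations $\varphi_k\colon G\to\Sym(A_k)$ of $G$, with $A_k=H\times[n_k]$, whose restriction to $H$ equals the honest free action $\psi_{n_k}$. Let $\overline A_k=A_k/H$ be the (honest) orbit space, so $|\overline A_k|=n_k\to\infty$, and write $[a]\in\overline A_k$ for the $H$-orbit of $a\in A_k$; the feature this buys is that $[\varphi_k(h)a]=[a]$ for every $h\in H$, whereas a general $\varphi_k(g)$ need not respect $H$-orbits at all.

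Given finite $F\subset G$, $W\subset V$, and $\epsilon>0$, I take $\varphi=\varphi_k$ for large $k$ and define $\pi_s\colon W\to\overline A_k$ by $\pi_s(v)=[\varphi_k(\sigma(v))^{-1}s]$ (this makes sense for every $s\in A_k$, though only for $s$ in the good set $S$ below will it be a graph embedding). The finite graph $B$ has vertex set $\overline A_k$, and I declare two distinct orbits $O,O'$ adjacent iff, after symmetrizing, there exist $v,w\in W$ with $(v,w)\in E$ and $a\in O$ with $\varphi_k(\sigma(w)^{-1}\sigma(v))a\in O'$. Finally $S\subset A_k$ is the set of $s$ at which all of the finitely many ``approximate homomorphism'' identities used below hold exactly and at which $\varphi_k(q)s\neq s$ for every $q$ in a fixed finite subset of $G\setminus\{1_G\}$ (the nontrivial elements of the forms $\sigma(v)h\sigma(w)^{-1}$ and $\sigma(w)h_2^{-1}(\sigma(v')^{-1}\sigma(w'))^{-1}h_1\sigma(v)^{-1}$ as $v,w,v',w'$ range over $W$ and $h,h_1,h_2$ over $H$). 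Since $\varphi_k$ is a sofic approximation of $G$, each excluded set has vanishing density, so $|S|>(1-\epsilon)|A_k|$ for $k$ large; also $\varphi_k$ is unital and $(F,\epsilon)$-multiplicative for $k$ large.

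The remaining task, which I expect to absorb essentially all of the work, is to check that $(\varphi,B,S,(\pi_s)_{s\in S})$ is an $(F,W,\epsilon)$-orbit approximation. For equivariance, write $g=\sigma(v)\,h_1\,\sigma(\alpha(g^{-1})v)^{-1}$ with $h_1=\sigma(v)^{-1}g\,\sigma(\alpha(g^{-1})v)\in H$; on $S$ the permutations $\varphi_k(\sigma(v))^{-1}\varphi_k(g)$ and $\varphi_k(h_1)\,\varphi_k(\sigma(\alpha(g^{-1})v))^{-1}$ agree at $s$, and applying $[\,\cdot\,]$ kills the factor $\varphi_k(h_1)$, giving $\pi_{\varphi(g)s}(v)=\pi_s(\alpha(g^{-1})v)$ whenever $\alpha(g^{-1})v\in W$. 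Injectivity of $\pi_s$ on $W$: $\pi_s(v)=\pi_s(w)$ would force $\varphi_k(\sigma(v)h\sigma(w)^{-1})s=s$ for some $h\in H$, and $\sigma(v)h\sigma(w)^{-1}\neq 1_G$ once $v\neq w$ (else $\sigma(v)H=\sigma(w)H$), so this is excluded on $S$. Edge-faithfulness is the delicate step and the one genuinely using $H$-bi-invariance of $\mathcal{D}$: if $(v,w)\in E$ then $a:=\varphi_k(\sigma(v))^{-1}s\in\pi_s(v)$ and, on $S$, $\varphi_k(\sigma(w)^{-1}\sigma(v))a=\varphi_k(\sigma(w))^{-1}s\in\pi_s(w)$, so $(\pi_s(v),\pi_s(w))$ is an edge of $B$ (the two orbits being distinct by injectivity); conversely an edge of $B$ between $\pi_s(v)$ and $\pi_s(w)$ witnessed by some $(v',w')\in E$ with $d=\sigma(v')^{-1}\sigma(w')$ unwinds on $S$ to $\varphi_k(\sigma(w)h_2^{-1}d^{-1}h_1\sigma(v)^{-1})s=s$, forcing the exponent to be $1_G$, i.e. $\sigma(v)^{-1}\sigma(w)\in HdH\subseteq\mathcal{D}$, i.e. $(v,w)\in E$. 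Hence every $\pi_s$ with $s\in S$ is a graph embedding of $(W,E|_{W\times W})$ into $B$, completing the proof. The hard part is precisely this last step: $E_B$ must be simultaneously well defined, symmetric, and faithfully recoverable by all $\pi_s$ at once, and it is the bi-invariance of $\mathcal{D}$ — special to $H$ being a stabilizer — that rules out the colliding double cosets that would otherwise make $E_B$ ill defined.
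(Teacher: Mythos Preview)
Your proof is correct and follows essentially the same strategy as the paper's: identify $V$ with $G/H$ via a section $\sigma$, build the finite graph $B$ as a quotient of the sofic-approximation set by an $H$-relation, set $\pi_s(v)$ to be the class of $\varphi(\sigma(v))^{-1}s$, and verify that $\pi_s$ is a graph embedding using the $H$-bi-invariance of the connection set (your $\mathcal{D}$), which is exactly the paper's argument that connectedness of $x',y'$ passes through the double coset to connectedness of $x,y$.

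The one substantive difference is that you invoke Lemma~\ref{sofic-approx-extn} to arrange $\varphi_k|_H=\psi_{n_k}$ is the honest free action of $H$, so the $H$-orbits on $A_k$ are genuine and the quotient $\overline{A_k}$ is globally well defined; the paper instead works directly with an arbitrary sofic approximation of $G$, defines the relation $s_1\sim s_2\Leftrightarrow \exists h\in H:\ s_1=\varphi(h)s_2$ only on the good set $S'$, and must verify by hand that $\sim$ is an equivalence relation there. Your route buys a cleaner quotient and slightly shorter well-definedness checks at the cost of quoting the extension lemma; the paper's route is more self-contained. Both rely on finiteness of $H$ in exactly the same place: to make the collection of products $\sigma(W)H\sigma(W)^{-1}$ (and the longer words you list) finite, so that the ``free'' conditions defining $S$ are finitely many.
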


\begin{proof}
    Fix some $v \in V$ and let $H = \Stab(v)$. Then we may take $V = G/H$ and the action $\alpha$ being the left-multiplication action of $G$ on $V = G/H$. Let $F \subset G$, $W \subset G/H$ be finite subsets. Fix $\epsilon > 0$. Let $q: G \rightarrow G/H$ be the natural quotient map. Fix $\sigma: G/H \rightarrow G$ an arbitrary section of $q$. Let,
    \begin{equation*}
        F' = F \cup H \cup [H \cdot (\sigma(W) \cup \sigma(W)^{-1}) \cdot H]
    \end{equation*}

    We observe that since $H$ is a subgroup, $1 \in H$, so,
    \begin{equation*}
    \begin{split}
        \sigma(W) \subset H \cdot (\sigma(W) \cup \sigma(W)^{-1}) \cdot H \subset F'\\
        \sigma(W)^{-1} \subset H \cdot (\sigma(W) \cup \sigma(W)^{-1}) \cdot H \subset F'\\
        \sigma(W) \cdot H \subset H \cdot (\sigma(W) \cup \sigma(W)^{-1}) \cdot H \subset F'\\
        H \cdot \sigma(W)^{-1} \subset H \cdot (\sigma(W) \cup \sigma(W)^{-1}) \cdot H \subset F'
    \end{split}
    \end{equation*}
    
    We also observe that, as $H$ is finite, so is $F'$. Let $\epsilon' = \frac{\epsilon}{|W| + 1}$. Since $F'$ is finite and as $G$ is sofic, there is a finite set $A$ and a unital, $(F, \epsilon)$-multiplicative map $\varphi: G \rightarrow \Sym(A)$ and a subset $S' \subset A$ s.t. $|S'| > (1 - \epsilon')|A|$, and,
    \begin{enumerate}
        \item $\varphi(g)s \neq \varphi(h)s$ for all $g, h \in F' \cdot F'$, $g \neq h$, $s \in S'$ (we observe here that $F' \subset F' \cdot F'$ as $1 \in H \subset F'$);
        \item $\varphi(gh)s = \varphi(g)\varphi(h)s$ for all $g, h \in F'$, $s \in S'$;
        \item $\varphi(g^{-1})s = \varphi(g)^{-1}s$ for all $g \in F'$, $s \in S'$.
    \end{enumerate}

    Now, on the set $S'$, we define a relation $s_1 \sim s_2$ if there exists $h \in H$ s.t. $s_1 = \varphi(h)s_2$. Since $\varphi$ is unital, $\sim$ is reflexive. As $H$ is a group, $H \subset F'$, and by the conditions on $S'$, we see that $\sim$ is symmetric and transitive. Hence, $\sim$ is an equivalence relation. Let $B$ be the graph with vertex set $V' = S'/\sim$, which is finite. We let $B$ have the edge set,
    \begin{equation*}
        E' = \{([s_1]_\sim, [s_2]_\sim): s_1, s_2 \in S' \, \text{s.t. } \exists x, y \in W \, \begin{cases}x, y\text{ are connected in }\Gamma\text{, and}\\
        \varphi(\sigma(x)^{-1}\sigma(y))s_2 = s_1\end{cases}\}
    \end{equation*}

    We need to show this is well-defined, i.e., $E'$ is a symmetric relation excluding the diagonal. Indeed, if $([s_1]_\sim, [s_2]_\sim) \in E'$, i.e., there exists $x, y \in W$ s.t. $x, y$ are connected in $\Gamma$ and $\varphi(\sigma(x)^{-1}\sigma(y))s_2 = s_1$. But then $y, x$ are connected in $\Gamma$, and by conditions on $S'$,
    \begin{equation*}
    \begin{split}
        s_1 = \varphi(\sigma(x)^{-1}\sigma(y))s_2 = \varphi(\sigma(x)^{-1})\varphi(\sigma(y))s_2 &\Rightarrow \varphi(\sigma(y))s_2 = \varphi(\sigma(x)^{-1})^{-1}s_1\\
        &\Rightarrow \varphi(\sigma(y)^{-1})^{-1}s_2 = \varphi(\sigma(x))s_1\\
        &\Rightarrow s_2 = \varphi(\sigma(y)^{-1})\varphi(\sigma(x))s_1
    \end{split}
    \end{equation*}

    So $([s_2]_\sim, [s_1]_\sim) \in E'$ and $E'$ is symmetric. To show it excludes the diagonal, let $v \in V'$. Suppose there exists $s_1, s_2 \in v$, $x, y \in W$ s.t. $x, y$ are connected in $\Gamma$ and $\varphi(\sigma(x)^{-1}\sigma(y))s_2 = s_1$. As $s_1 \sim s_2$, we have $s_2 = \varphi(h)s_1$ for some $h \in H$. Again, by conditions on $S'$,
    \begin{equation*}
    \begin{split}
        s_1 = \varphi(\sigma(x)^{-1}\sigma(y))s_2 = \varphi(\sigma(x)^{-1})\varphi(\sigma(y))s_2 &\Rightarrow \varphi(\sigma(x)^{-1})^{-1}s_1 = \varphi(\sigma(y))\varphi(h)s_1\\
        &\Rightarrow \varphi(\sigma(x))s_1 = \varphi(\sigma(y)h)s_1\\
        &\Rightarrow \sigma(x) = \sigma(y)h\\
        &\Rightarrow x = y
    \end{split}
    \end{equation*}

    But $x, y$ are connected in $\Gamma$, so $x \neq y$, a contradiction. Thus, $(v, v) \notin E'$.

    Having shown that $B$ is well-defined, we now define,
    \begin{equation*}
        S = S' \cap \bigcap_{x \in E} \varphi(\sigma(x)^{-1})^{-1}S'
    \end{equation*}

    We observe that as $|S'| > (1 - \epsilon')|A|$ and $\epsilon' = \frac{\epsilon}{|W| + 1}$, we have $|S| > (1 - \epsilon)|A|$. Now, for $s \in S$, we define $\pi_s: W \hookrightarrow V'$ to be $\pi_s(x) = [\varphi(\sigma(x)^{-1})s]_\sim$. We observe that this is injective. Indeed, assume $\pi_s(x) = \pi_s(y)$, i.e., $\varphi(\sigma(x)^{-1})s \sim \varphi(\sigma(y)^{-1})s$. Then there exists $h \in H$ s.t. $\varphi(\sigma(x)^{-1})s = \varphi(h)\varphi(\sigma(y)^{-1})s$. By conditions on $S'$ and noting that $s \in S \subset S'$, we have $\varphi(\sigma(x)^{-1})s = \varphi(h)\varphi(\sigma(y)^{-1})s = \varphi(h\sigma(y)^{-1})s$, so $\sigma(x)^{-1} = h\sigma(y)^{-1}$. But then $\sigma(x) = \sigma(y)h^{-1} \in \sigma(y)H = y$, i.e., $x = y$. This shows that $\pi_s$ must be injective.

    We now show that $x, y \in W$ are connected in $\Gamma$ iff $(\pi_s(x), \pi_s(y)) = ([\varphi(\sigma(x)^{-1})s]_\sim, [\varphi(\sigma(y)^{-1})s]_\sim) \in E'$. First, assume $x, y \in W$ are connected in $\Gamma$. As $\varphi(\sigma(y)^{-1})s \in S'$ and $s \in S'$, by conditions on $S'$,
    \begin{equation*}
    \begin{split}
        &\varphi(\sigma(y)^{-1})s = \varphi(\sigma(y))^{-1}s\\
        \Rightarrow\; &\varphi(\sigma(x)^{-1}\sigma(y))\varphi(\sigma(y)^{-1})s = \varphi(\sigma(x)^{-1})\varphi(\sigma(y))\varphi(\sigma(y))^{-1}s = \varphi(\sigma(x)^{-1})s
    \end{split}
    \end{equation*}

    So $([\varphi(\sigma(x)^{-1})s]_\sim, [\varphi(\sigma(y)^{-1})s]_\sim) \in E'$ by definition of $E'$.
    
    Conversely, suppose $([\varphi(\sigma(x)^{-1})s]_\sim, [\varphi(\sigma(y)^{-1})s]_\sim) \in E'$, i.e., there exists $s_1 \sim \varphi(\sigma(x)^{-1})s$, $s_2 \sim \varphi(\sigma(y)^{-1})s$, $x', y' \in W$ s.t. $x', y'$ are connected in $\Gamma$ and $\varphi(\sigma(x')^{-1}\sigma(y'))s_2 = s_1$. Since $s_1, s_2 \in S'$, by conditions on $S'$,
    \begin{equation*}
        s_1 = \varphi(\sigma(x')^{-1}\sigma(y'))s_2 = \varphi(\sigma(x')^{-1})\varphi(\sigma(y'))s_2 \Rightarrow \varphi(\sigma(x'))s_1 = \varphi(\sigma(x')^{-1})^{-1}s_1 = \varphi(\sigma(y'))s_2
    \end{equation*}

    Since $s_1 \sim \varphi(\sigma(x)^{-1})s$, $s_2 \sim \varphi(\sigma(y)^{-1})s$, there exists $h_1, h_2 \in H$ s.t. $s_1 = \varphi(h_1)\varphi(\sigma(x)^{-1})s$ and $s_2 = \varphi(h_2)\varphi(\sigma(y)^{-1})s$. Since $\varphi(\sigma(x)^{-1})s \in S'$ and $s \in S'$, by conditions on $S'$,
    \begin{equation*}
        \varphi(\sigma(x'))s_1 = \varphi(\sigma(x'))\varphi(h_1)\varphi(\sigma(x)^{-1})s = \varphi(\sigma(x')h_1)\varphi(\sigma(x)^{-1})s = \varphi(\sigma(x')h_1\sigma(x)^{-1})s
    \end{equation*}

    Similarly, $\varphi(\sigma(y'))s_2 = \varphi(\sigma(y')h_2\sigma(y)^{-1})s$. So, $\varphi(\sigma(x')h_1\sigma(x)^{-1})s = \varphi(\sigma(y')h_2\sigma(y)^{-1})s$. Again, noting that $s \in S'$, $\sigma(x')h_1\sigma(x)^{-1}, \sigma(y')h_2\sigma(y)^{-1} \in \sigma(W) \cdot H \cdot \sigma(W)^{-1} \subset F' \cdot F'$, by conditions on $S'$, we have $\sigma(x')h_1\sigma(x)^{-1} = \sigma(y')h_2\sigma(y)^{-1}$. Since $x', y'$ are connected in $\Gamma$, by multiplying by $\sigma(x')^{-1}$, we see that $H = \sigma(x')^{-1}x'$ is connected to $\sigma(x')^{-1}y' = \sigma(x')^{-1}\sigma(y')H$. Multiplying by $h_1^{-1} \in H$, we see that $H$ is connected to $h_1^{-1}\sigma(x')^{-1}\sigma(y')H$. Finally, multiplying by $\sigma(x)$, we see that $x = \sigma(x)H$ is connected to,
    \begin{equation*}
    \begin{split}
        \sigma(x)h_1^{-1}\sigma(x')^{-1}\sigma(y')H &= (\sigma(x')h_1\sigma(x)^{-1})^{-1}\sigma(y')H\\
        &= (\sigma(y')h_2\sigma(y)^{-1})^{-1}\sigma(y')H\\
        &= \sigma(y)h_2^{-1}\sigma(y')^{-1}\sigma(y')H\\
        &= \sigma(y)H\\
        &= y
    \end{split}
    \end{equation*}

    This shows $x, y$ are connected in $\Gamma$, so $\pi_s$ is indeed a graph embedding.

    Finally, for all $s \in S$, $g \in F$, $x \in W$, if $\varphi(g)s \in S$ and $\alpha(g^{-1})x = g^{-1}x \in W$, then by conditions on $S'$,
    \begin{equation*}
        \pi_{\varphi(g)s}(x) = [\varphi(\sigma(x)^{-1})\varphi(g)s]_\sim = [\varphi(\sigma(x)^{-1}g)s]_\sim
    \end{equation*}
    and,
    \begin{equation*}
        \pi_s(g^{-1}x) = [\varphi(\sigma(g^{-1}x)^{-1})s]_\sim
    \end{equation*}

    It now suffices to prove $\varphi(\sigma(x)^{-1}g)s \sim \varphi(\sigma(g^{-1}x)^{-1})s$. Let $h = \sigma(x)^{-1}g\sigma(g^{-1}x)$, which we observe is an element of $H$. Since $g^{-1}x \in W$, by conditions on $S'$,
    \begin{equation*}
        \varphi(h)\varphi(\sigma(g^{-1}x)^{-1})s = \varphi(h\sigma(g^{-1}x)^{-1})s = \varphi(\sigma(x)^{-1}g)s
    \end{equation*}

    This shows $\varphi$ is an $(F, W, \epsilon)$-orbit approximation and concludes the proof.
\end{proof}

\begin{thm}
\label{amenable stabilizers}
    Let $\alpha:G\curvearrowright\Gamma=(V,E)$ be an action of a sofic group such that $G\curvearrowright V$ is transitive and $\Stab(v)$ is amenable for some/all $v\in V.$ Then $\alpha$ is sofic.
\end{thm}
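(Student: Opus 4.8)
The plan is to implement the strategy sketched after Theorem~\ref{amenable stabilizers}: upgrade the finite-stabilizer argument of Lemma~\ref{finite-stabilizers} by replacing the (infinite) $\varphi(H)$-orbits used there with \emph{tiles} of a F{\o}lner tiling of $H$. First I would reduce. Since the point-stabilizers of a transitive action are conjugate, $\Stab(v)$ is amenable for one $v$ iff for all; call it $H$. If $H$ is finite we are done by Lemma~\ref{finite-stabilizers}, so assume $H$ is infinite amenable. Identify $V=G/H$ with $\alpha$ the left-multiplication action, fix a section $\sigma\colon G/H\to G$ with $\sigma(eH)=e$, and fix finite $F\subset G$, $W\subset G/H$ and $\epsilon>0$. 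Let $K_0\subset H$ be the finite set of all elements $\sigma(x)^{-1}g\,\sigma(g^{-1}x)$ with $g\in F$, $x\in W$, $g^{-1}x\in W$ (each of these fixes the coset $H$, so lies in $H$). By Lemma~\ref{folner-tiling} choose a tiling $T$ of $H$, with shapes $S_1,\dots,S_r$, all of whose tiles are $(K_0,\delta)$-invariant, where $\delta>0$ is a parameter fixed small at the end.

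Next I would build sofic approximations of $G$ adapted to $T$. By Corollary~\ref{folner-seq-tiled} there is a F{\o}lner sequence $(A_n)$ of $H$ with $|A_n|\to\infty$ and each $A_n$ an \emph{exact} union of $T$-tiles. Let $\psi_n\colon H\to\Sym(A_n)$ be the standard F{\o}lner sofic approximations, with $\psi_n(h)$ equal to left translation by $h$ on $A_n\cap h^{-1}A_n$ and extended arbitrarily to a bijection; amenability makes $(\psi_n)$ a sofic approximation sequence. By Lemma~\ref{sofic-approx-extn} there are $n_1<n_2<\cdots$ and a sofic approximation sequence $\varphi_k\colon G\to\Sym(A_{n_k})$ with $\varphi_k|_H=\psi_{n_k}$. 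Hence for large $k$: (a) $\varphi_k$ is $(E,\eta)$-multiplicative and $d(\varphi_k(g),1)>1-\eta$ for $g\in E\setminus\{e\}$, for any prescribed finite $E\subset G$ and $\eta>0$; and (b) because $A_{n_k}$ is a union of $T$-tiles and $\varphi_k(h)=\psi_{n_k}(h)$, the permutation $\varphi_k(h)$ agrees with left translation by $h$ off an $o(1)$-fraction of $A_{n_k}$ for every $h$ in a prescribed finite subset of $H$, in particular for $h\in K_0\cup\bigcup_i S_iS_i^{-1}$.

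Now fix $k$ large, write $A=A_{n_k}$, $\varphi=\varphi_k$, and let $\mathcal{S}\subset G$ be the finite family of group elements built from $\sigma(W)^{\pm1}$, $F$, $K_0$ and $\bigcup_i S_iS_i^{-1}$ that occurs in the computations below. As in Lemma~\ref{finite-stabilizers}, pick $S'\subset A$ with $|S'|>(1-\epsilon')|A|$ on which the relevant finitely many conditions hold: multiplicativity and $\varphi(g^{-1})=\varphi(g)^{-1}$ for elements/pairs from $\mathcal{S}\cdot\mathcal{S}$; agreement of $\varphi(h)$ with left translation by $h$ for $h\in(\mathcal{S}\cdot\mathcal{S})\cap H$; and separation $\varphi(a)s\neq\varphi(b)s$ for distinct $a,b\in\mathcal{S}\cdot\mathcal{S}$. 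Since $T$ partitions $A$ exactly, ``$a_1,a_2$ lie in the same tile'' is an equivalence relation $\approx$ on $A$; let $B$ have vertex set $S'/{\approx}$ and edge set $E'$, with $([a_1]_\approx,[a_2]_\approx)\in E'$ iff there exist $x,y\in W$ connected in $\Gamma$ and $a_1',a_2'\in S'$ with $a_1'\approx a_1$, $a_2'\approx a_2$, $\varphi(\sigma(x)^{-1}\sigma(y))a_2'=a_1'$. The conditions on $S'$ make $E'$ symmetric (invert using multiplicativity at $a_1'\in S'$) and disjoint from the diagonal (if $[a_1']_\approx=[a_2']_\approx$ then $a_2'=h a_1'$ for some $h\in\bigcup_i S_iS_i^{-1}$, and multiplicativity then separation force $\sigma(x)^{-1}\sigma(y)h^{-1}=e$, i.e.\ $\sigma(x)^{-1}\sigma(y)\in H$, i.e.\ $x=y$, contradicting that $x,y$ are connected) --- exactly as in Lemma~\ref{finite-stabilizers}. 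Let $S\subset A$ be the set of $s$ lying in the domains of the finitely many multiplicativity relations used below, with $\varphi(\sigma(x)^{-1})s\in S'$ and $\varphi(\sigma(g^{-1}x)^{-1})s\in S'$ for all relevant $x,g$, and with $h\,\varphi(\sigma(g^{-1}x)^{-1})s\approx\varphi(\sigma(g^{-1}x)^{-1})s$ for $h=\sigma(x)^{-1}g\,\sigma(g^{-1}x)\in K_0$; by $(K_0,\delta)$-invariance of tiles the last requirement excludes at most $|F|\,|W|\,|K_0|\,\delta|A|$ points, so $|S|>(1-\epsilon)|A|$ once $\epsilon'$ and $\delta$ are small enough. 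For $s\in S$ put $\pi_s\colon W\to S'/{\approx}$, $\pi_s(x)=[\varphi(\sigma(x)^{-1})s]_\approx$. Then $\pi_s$ is injective: $\pi_s(x)=\pi_s(y)$ gives $\varphi(\sigma(x)^{-1})s=h\,\varphi(\sigma(y)^{-1})s$ for some $h\in\bigcup_i S_iS_i^{-1}$, hence (multiplicativity, then separation) $\sigma(x)^{-1}=h\sigma(y)^{-1}$ in $G$, so $\sigma(x)^{-1}\sigma(y)\in H$ and $x=y$. And $\pi_s$ is a graph embedding: the forward direction uses the witness $a_1'=\varphi(\sigma(x)^{-1})s$, $a_2'=\varphi(\sigma(y)^{-1})s$ together with $\varphi(\sigma(x)^{-1}\sigma(y))\varphi(\sigma(y)^{-1})s=\varphi(\sigma(x)^{-1})s$, and the reverse direction is the same chain of multiplications plus the $G$-invariance of ``is connected'' verbatim as in Lemma~\ref{finite-stabilizers}. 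Finally, for $\varphi(g)s\in S$, $g\in F$, $x\in W$, $g^{-1}x\in W$ we compute $\varphi(\sigma(x)^{-1})\varphi(g)s=\varphi(\sigma(x)^{-1}g)s=\varphi(h)\varphi(\sigma(g^{-1}x)^{-1})s=h\,\varphi(\sigma(g^{-1}x)^{-1})s$ with $h=\sigma(x)^{-1}g\,\sigma(g^{-1}x)\in K_0$, and this lies in the same tile as $\varphi(\sigma(g^{-1}x)^{-1})s$ by the definition of $S$, whence $\pi_{\varphi(g)s}(x)=\pi_s(g^{-1}x)=\pi_s(\alpha(g^{-1})x)$. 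Together with unitality and $(F,\epsilon)$-multiplicativity of $\varphi$ for large $k$, this exhibits the required $(F,W,\epsilon)$-orbit approximation.

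The step I expect to be the main obstacle is the injectivity of $\pi_s$, i.e.\ excluding that $\varphi(\sigma(x)^{-1})s$ and $\varphi(\sigma(y)^{-1})s$ land in one tile for distinct $x,y\in W$. This is where all three ingredients must cooperate: the tile structure turns ``same tile'' into ``differ by a bounded element of $H$''; multiplicativity converts that into an equation among a fixed finite family of elements of $G$; and the \emph{separation} property of a sofic approximation of $G$ --- not merely of $H$ --- is what forces the element to be trivial, hence $x=y$. It is exactly to obtain sofic approximations of $G$ whose restriction to $H$ is the left-translation (F{\o}lner) approximation on a union of tiles that Lemma~\ref{sofic-approx-extn}, and behind it the Elek--Szab\'o uniqueness of sofic approximations of amenable groups, is used. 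A lesser nuisance is the order of quantifiers: the finite set $K_0$ governing the tiling must be fixed before the tiling, whereas $\bigcup_i S_iS_i^{-1}$ --- which also needs multiplicativity and left-translation agreement --- only emerges afterwards; but soficity of $\varphi_k$ and the Elek--Szab\'o conjugacy deliver those properties for \emph{any} prescribed finite set at large $k$, so there is no circularity, and the remaining work is the routine bookkeeping of the $o(1)$ error terms.
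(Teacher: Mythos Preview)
Your proposal is correct and follows essentially the same route as the paper's proof: reduce to infinite amenable $H$, choose a $(K_0,\delta)$-invariant F{\o}lner tiling of $H$, take a F{\o}lner sequence consisting of unions of tiles, extend the left-translation sofic approximations of $H$ to sofic approximations of $G$ via Lemma~\ref{sofic-approx-extn}, and build the target graph on the set of tiles with $\pi_s(x)=[\varphi(\sigma(x)^{-1})s]$. The only cosmetic differences are that the paper first passes to the union $B_n'$ of tiles fully contained in the good set (whereas you work directly on $S'/{\approx}$), and the paper does not impose the restriction $g^{-1}x\in W$ when defining $K$; neither affects the argument.
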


\begin{proof}
    Fix some $v \in V$ and let $H = \Stab(v)$. Then we may take $V = G/H$ and the action $\alpha$ being the left-multiplication action of $G$ on $V = G/H$. We have $H$ is amenable and by Lemma \ref{finite-stabilizers}, we may assume it is infinite. Let $F \subset G$ and $W \subset V = G/H$ be finite sets, $\epsilon > 0$. Let $\epsilon' = \frac{\epsilon}{|W|+1}$.
    
    Fix a lifting map $\sigma: G/H \to G$, and let $K = \{\sigma(x)^{-1}g\sigma(g^{-1}x): g \in F, x \in W\}$, which we observe is a finite subset of $H$. Let $\epsilon_1 > 0$ be chosen s.t. $(1 - |K|\epsilon_1) > 1 - \epsilon'$. By Lemma \ref{folner-tiling}, we may choose a tiling $T$ of $H$ such that all tiles of $T$ are $(K, \epsilon_1)$-invariant. Let the shapes of $T$ be denoted by $S_1, \cdots, S_m$ and $M = \max\{|S_1|, \cdots, |S_n|\}$. By Corollary \ref{folner-seq-tiled}, there exists a Følner sequence $A_n$ of $H$ each one a union of tiles in $T$ and $|A_n| \to \infty$. Let $\psi_n: H \to \Sym(A_n)$ be s.t. $\psi_n(h)a = ha$ for $h \in H$, $a \in A_n$, when $ha \in A_n$. Since $A_n$ is a Følner sequence, $\psi_n$ is a sequence of sofic approximations of $H$, whence by passing to a subsequence and applying Lemma \ref{sofic-approx-extn}, we may assume there is a sequence of sofic approximations $\varphi_n: G \to \Sym(A_n)$ of $G$ whose restriction to $H$ are $\psi_n$.

    Since $(1 - |K|\epsilon_1) > 1 - \epsilon'$, we may choose $\epsilon_2 > 0$ s.t. $(1 - |K|\epsilon_1)(1 - (M+1)\epsilon_2) > 1 - \epsilon'$. Since $\varphi_n$ is a sequence of sofic approximations of $G$ and $A_n$ is a Følner sequence of $H$, and by the definition of $\psi_n$ and $\varphi_n$, we have, for large enough $n$, $\varphi_n$ is $(F, \epsilon)$-multiplicative and there exists $B_n \subset A_n$ with $|B_n| \geq (1 - \epsilon_2)|A_n|$, s.t.
    \begin{enumerate}
        \item $\varphi_n(\sigma(x)^{-1}\sigma(y))b \neq \varphi_n(k)b$ for all $x, y \in W$, $x \neq y$, $k \in \cup_{i=1}^m S_iS_i^{-1}$, $b \in B_n$;
        \item $\varphi_n(\sigma(x)^{-1})b = \varphi_n(\sigma(x)^{-1}\sigma(y))\varphi_n(\sigma(y)^{-1})b \neq \varphi_n(k)\varphi_n(\sigma(y)^{-1})b$ for all $x, y \in W$, $x \neq y$, $k \in \cup_{i=1}^m S_iS_i^{-1}$, $b \in B_n$;
        \item $\varphi_n(\sigma(x)^{-1}\sigma(y))^{-1}b = \varphi_n(\sigma(y)^{-1}\sigma(x))b$ for all $x, y \in W$ and $b \in B_n$;
        \item $\varphi_n(k)b = kb$ for all $k \in \cup_{i=1}^m S_iS_i^{-1}$ and $b \in B_n$;
        \item $\varphi_n(k\sigma(x)^{-1})b = \varphi_n(k)\varphi_n(\sigma(x)^{-1})b = k\varphi_n(\sigma(x)^{-1})b$ for all $x \in W$, $k \in \cup_{i=1}^m S_iS_i^{-1}$, $b \in B_n$;
        \item $\varphi_n(\sigma(x)^{-1})\varphi_n(g)b = \varphi_n(\sigma(x)^{-1}g)b = \varphi_n(\sigma(x)^{-1}g\sigma(g^{-1}x)^{-1})\varphi_n(\sigma(g^{-1}x)^{-1})b$ for all $x \in W$, $g \in F$, $b \in B_n$;
        \item $k_1\varphi_n(\sigma(x)^{-1}\sigma(y))k_2b = \varphi_n(k_1)\varphi_n(\sigma(x)^{-1}\sigma(y))\varphi_n(k_2)b = \varphi_n(k_1\sigma(x)^{-1}\sigma(y)k_2)b$ for all $x, y \in W$, $k_1, k_2 \in \cup_{i=1}^m S_iS_i^{-1}$, $b \in B_n$;
        \item $\varphi_n(\alpha)b \neq \varphi_n(\beta)b$ for all $\alpha, \beta \in \{k_1\sigma(x)^{-1}\sigma(y)k_2: x, y \in W, k_1, k_2 \in \cup_{i=1}^m S_iS_i^{-1}\}$, $\alpha \neq \beta$, $b \in B_n$.
    \end{enumerate}

    Since $|B_n| \geq (1 - \epsilon_2)|A_n|$, there are at most $\epsilon_2|A_n|$ many tiles of $A_n$ not fully contained in $B_n$. Let $B_n' \subset B_n$ be the subset of $B_n$ with all tiles not fully contained in $B_n$ removed so that $B_n'$ is a union of tiles. Then $|B_n \setminus B_n'| \leq M\epsilon_2|A_n|$. Thus, $|B_n'| \geq (1 - (M+1)\epsilon_2)|A_n|$.

    Fix such an $n$. Let $V'$ be the collection of tiles of $T$ contained in $B_n'$, which is finite. Let $B$ be the finite graph with vertex set $V'$ and with edge set,
    \begin{equation*}
        E' = \{(v_1, v_2) \in V' \times V': \exists v_1' \in v_1 \, \exists v_2' \in v_2 \, \exists x, y \in W \, \text{s.t. }\begin{cases}x, y\text{ are connected in }\Gamma\text{, and}\\
        \varphi_n(\sigma(x)^{-1}\sigma(y))v_2' = v_1'\end{cases}\}
    \end{equation*}
    
    We need to show this is well-defined, i.e., $E'$ is a symmetric relation excluding the diagonal. Indeed, if $(v_1, v_2) \in E'$, then there exists $v_1' \in v_1$, $v_2' \in v_2$, $x, y \in W$ s.t. $x, y$ are connected in $\Gamma$ and $\varphi_n(\sigma(x)^{-1}\sigma(y))v_2' = v_1'$. But then $y, x$ are connected in $\Gamma$ and, as $v_1', v_2' \in B_n' \subset B_n$, by condition 3 in the assumptions on $B_n$,
    \begin{equation*}
        v_2' = \varphi_n(\sigma(x)^{-1}\sigma(y))^{-1}v_1' = \varphi_n(\sigma(y)^{-1}\sigma(x))v_1'
    \end{equation*}

    So $(v_2, v_1) \in E'$ and $E'$ is symmetric. To show it excludes the diagonal, let $v \in V'$. Suppose there exists $v_1', v_2' \in v$, $x, y \in W$ s.t. $x, y$ are connected in $\Gamma$ and $\varphi_n(\sigma(x)^{-1}\sigma(y))v_2' = v_1'$. Since $x \neq y$, again because $v_1', v_2' \in B_n' \subset B_n$, by conditions 1 and 4 in the assumptions on $B_n$,
    \begin{equation*}
        v_1' = \varphi_n(\sigma(x)^{-1}\sigma(y))v_2' \neq \varphi_n(k)v_2' = kv_2'
    \end{equation*}
    for any $k \in \cup_{i=1}^m S_iS_i^{-1}$. Now, since $v_1'$ and $v_2'$ are in the same tile, namely $v$, we have $v_1', v_2' \in S_ic$ for some $1 \leq i \leq m$ and $c \in C(S_i)$. But then there exists $k \in S_iS_i^{-1}$ s.t. $v_1' = kv_2'$, a contradiction. Thus, $(v, v) \notin E'$.

    Having shown that $B$ is well-defined, we now proceed to construct $S \subset A_n$ and $\pi_s$ for $s \in S$ to show that $\varphi_n$ is an $(F, W, \epsilon)$-orbit approximation of $\alpha$. Since we have already seen that $\varphi_n$ is unital and $(F, \epsilon)$-multiplicative, this would prove the theorem.

    Recall that $K = \{\sigma(x)^{-1}g\sigma(g^{-1}x): g \in F, x \in W\}$ and that all tiles in $T$ are $(K, \epsilon_1)$-invariant. Since $B_n'$ is a disjoint union of tiles, i.e., $B_n' = \sqcup_{j=1}^M T_j$ for tiles $T_j$ in $T$, for each tile $T_j$ we may define,
    \begin{equation*}
        T_j' = T_j \cap (\bigcap_{\kappa \in K} \kappa^{-1}T_j)
    \end{equation*}

    Then $|T_j'| \geq (1 - |K|\epsilon_1)|T_j|$. Let $S' = \sqcup_{j=1}^M T_j'$, so,
    \begin{equation*}
        |S'| \geq (1 - |K|\epsilon_1)|B_n'| \geq (1 - |K|\epsilon_1)(1 - (M+1)\epsilon_2)|A_n| > (1 - \epsilon')|A_n|
    \end{equation*}
    
    Finally, let,
    \begin{equation*}
        S = S' \cap (\bigcap_{x \in W} \varphi_n(\sigma(x)^{-1})^{-1}S')
    \end{equation*}
    
    Then as $|S'| > (1 - \epsilon')|A_n|$, we have $|S| > (1 - (|W|+1)\epsilon')|A_n| = (1 - \epsilon)|A_n|$.

    Now, let $\pi: B_n' \to V'$ be the map that sends $s \in B_n'$ to the corresponding tile. Then for each $s \in S$, we define $\pi_s: W \to V'$,
    \begin{equation*}
        \pi_s(x) = \pi(\varphi_n(\sigma(x)^{-1})s)
    \end{equation*}

    Note that this is well-defined as $\varphi_n(\sigma(x)^{-1})S \subset S' \subset B_n' \subset B_n$. For any $s \in S$, $g \in F$, $x \in W$, if $\varphi(g)s \in S$ and $\alpha(g^{-1})x = g^{-1}x \in W$, we have,
    \begin{equation*}
        \pi_{\varphi_n(g)s}(x) = \pi(\varphi_n(\sigma(x)^{-1})\varphi_n(g)s)
    \end{equation*}

    As $s \in B_n$, by condition 6 in the assumptions on $B_n$, we have,
    \begin{equation*}
        \varphi_n(\sigma(x)^{-1})\varphi_n(g)s = \varphi_n(\sigma(x)^{-1}g)s = \varphi_n(\sigma(x)^{-1}g\sigma(g^{-1}x)^{-1})\varphi_n(\sigma(g^{-1}x)^{-1})s
    \end{equation*}
    
    Note that $\sigma(x)^{-1}g\sigma(g^{-1}x) \in K$. Since $g^{-1}x \in W$, $s \in S$, we have $\varphi_n(\sigma(g^{-1}x)^{-1})s \in S'$. Thus, there exists $1 \leq j \leq M$ s.t. $\varphi_n(\sigma(g^{-1}x)^{-1})s \in T_j'$. By definition of $T_j'$, we have,
    \begin{equation*}
        \sigma(x)^{-1}g\sigma(g^{-1}x)^{-1}\varphi_n(\sigma(g^{-1}x)^{-1})s \in T_j \subset A_n
    \end{equation*}

    So by definition of $\psi_n$ and $\varphi_n$, we have,
    \begin{equation*}
    \begin{split}
        \varphi_n(\sigma(x)^{-1})\varphi_n(g)s &= \varphi_n(\sigma(x)^{-1}g\sigma(g^{-1}x)^{-1})\varphi_n(\sigma(g^{-1}x)^{-1})s\\
        &= \sigma(x)^{-1}g\sigma(g^{-1}x)^{-1}\varphi_n(\sigma(g^{-1}x)^{-1})s
    \end{split}
    \end{equation*}

    At the same time, as $\sigma(x)^{-1}g\sigma(g^{-1}x)^{-1}\varphi_n(\sigma(g^{-1}x)^{-1})s \in T_j$ and $\varphi_n(\sigma(g^{-1}x)^{-1})s \in T_j' \subset T_j$, they belong to the same tile, i.e., $\pi(\sigma(x)^{-1}g\sigma(g^{-1}x)^{-1}\varphi_n(\sigma(g^{-1}x)^{-1})s) = \pi(\varphi_n(\sigma(g^{-1}x)^{-1})s)$, so,
    \begin{equation*}
    \begin{split}
        \pi_{\varphi_n(g)s}(x) &= \pi(\sigma(x)^{-1}g\sigma(g^{-1}x)^{-1}\varphi_n(\sigma(g^{-1}x)^{-1})s)\\
        &= \pi(\varphi_n(\sigma(g^{-1}x)^{-1})s) = \pi_s(g^{-1}x)\\
        &= \pi_s(\alpha(g^{-1})x)
    \end{split}
    \end{equation*}

    We also need to show that $\pi_s$ is a graph embedding for any $s \in S$. Fix $s \in S$. We first show that it is injective. Assume to the contrary that $x, y \in W$, $x \neq y$, and $\pi_s(x) = \pi_s(y)$, i.e., $\pi(\varphi_n(\sigma(x)^{-1})s) = \pi(\varphi_n(\sigma(y)^{-1})s)$, i.e., $\varphi_n(\sigma(x)^{-1})s)$ and $\pi(\varphi_n(\sigma(y)^{-1})s)$ are in the same tile. Thus, there exists $k \in \cup_{i=1}^m S_iS_i^{-1}$ s.t. $\varphi_n(\sigma(x)^{-1})s = k\varphi_n(\sigma(y)^{-1})s$. Again as $s \in B_n$, by conditions 2 and 5 in the assumptions on $B_n$,
    \begin{equation*}
    \begin{split}
        \varphi_n(\sigma(x)^{-1})s &= \varphi_n(\sigma(x)^{-1}\sigma(y))\varphi_n(\sigma(y)^{-1})b\\
        &\neq \varphi_n(k)\varphi_n(\sigma(y)^{-1})b\\
        &= k\varphi_n(\sigma(y)^{-1})s\\
        &= \varphi_n(\sigma(x)^{-1})s
    \end{split}
    \end{equation*}

    This is a contradiction, which shows $\pi_s$ is injective. Lastly, we shall show $x, y \in W$ are connected in $\Gamma$ iff $(\pi_s(x), \pi_s(y)) \in E'$. Assume first that $x, y$ are connected in $\Gamma$, then by condition 2 in the assumptions on $B_n$,
    \begin{equation*}
        \varphi_n(\sigma(x)^{-1}\sigma(y))\varphi_n(\sigma(y)^{-1})s = \varphi_n(\sigma(x)^{-1})s
    \end{equation*}

    So by definition of $E'$, $(\pi_s(x), \pi_s(y)) = (\pi(\varphi_n(\sigma(x)^{-1})s), \pi(\varphi_n(\sigma(y)^{-1})s)) \in E'$.
    
    Conversely, if $(\pi_s(x), \pi_s(y)) = (\pi(\varphi_n(\sigma(x)^{-1})s), \pi(\varphi_n(\sigma(y)^{-1})s)) \in E'$, then $x \neq y$ and there exists $v_1' \in \pi(\varphi_n(\sigma(x)^{-1})s)$, $v_2' \in \pi(\varphi_n(\sigma(y)^{-1})s)$, $x', y' \in W$ s.t. $x', y'$ are connected in $\Gamma$ and $\varphi_n(\sigma(x')^{-1}\sigma(y'))v_2' = v_1'$. Since $v_1' \in \pi(\varphi_n(\sigma(x)^{-1})s)$, there exists $k_1 \in \cup_{i=1}^m S_iS_i^{-1}$ s.t. $\varphi_n(\sigma(x)^{-1})s = k_1v_1'$. Similarly, there exists $k_2 \in \cup_{i=1}^m S_iS_i^{-1}$ s.t. $v_2' = k_2\varphi_n(\sigma(y)^{-1})s$. Since $\varphi_n(\sigma(y)^{-1})s \in B_n$, by condition 7 in the assumptions on $B_n$,
    \begin{equation*}
        \varphi_n(\sigma(x)^{-1})s = k_1\varphi_n(\sigma(x')^{-1}\sigma(y'))k_2\varphi_n(\sigma(y)^{-1})s = \varphi_n(k_1\sigma(x')^{-1}\sigma(y')k_2)\varphi_n(\sigma(y)^{-1})s
    \end{equation*}

    On the other hand, by condition 2 in the assumptions on $B_n$,
    \begin{equation*}
        \varphi_n(\sigma(x)^{-1})s = \varphi_n(\sigma(x)^{-1}\sigma(y))\varphi_n(\sigma(y)^{-1})s = \varphi_n(1 \cdot \sigma(x)^{-1}\sigma(y) \cdot 1)\varphi_n(\sigma(y)^{-1})s
    \end{equation*}

    Since $1 \in \cup_{i=1}^m S_iS_i^{-1}$, by condition 8 in the assumptions on $B_n$, we must have $\sigma(x)^{-1}\sigma(y) = k_1\sigma(x')^{-1}\sigma(y')k_2$.

    Since $x'$ is connected to $y'$, by multiplying by $\sigma(x')^{-1}$, we see that $H = \sigma(x')^{-1}x'$ is connected to $\sigma(x')^{-1}y' = \sigma(x')^{-1}\sigma(y')H$. Multiplying by $k_1 \in \cup_{i=1}^m S_iS_i^{-1} \subset H$ and noting that $k_2 \in \cup_{i=1}^m S_iS_i^{-1} \subset H$, we then see that $H$ is connected to $k_1\sigma(x')^{-1}\sigma(y')H = k_1\sigma(x')^{-1}\sigma(y')k_2H = \sigma(x)^{-1}\sigma(y)H$. Finally, multiplying by $\sigma(x)$, we see that $x = \sigma(x)H$ is connected to $\sigma(x)\sigma(x)^{-1}\sigma(y)H = y$. This concludes the proof.
\end{proof}

\begin{defn}
    A Cayley graph of a group $G$ is a graph $\Gamma = (G,E)$ where, for some fixed (not necessarily finite) symmetric subset $S\subset G \setminus\{1_G\}$, $(g,h)\in E$ if and only if exists $k\in S$ such that $gk = h.$
\end{defn}

\begin{cor}
    A group is sofic if and only if its left multiplication action on any/all Cayley graph is sofic.
\end{cor}

\begin{proof}
    $(\Rightarrow)$ Suppose $G$ is sofic. The left multiplication $G\curvearrowright G$ induces a graph action $G\curvearrowright \Gamma$ on any Cayley graph $\Gamma$ of $G.$ This action is clearly transitive and has trivial stabilizers, so by Lemma \ref{finite-stabilizers} the action is sofic.

    $(\Leftarrow)$ Suppose $G\curvearrowright \Gamma$ is sofic for some Cayley graph $\Gamma$ of $G.$ Since all stabilizers are trivial, by Theorem \ref{sofic-actions-imply-sofic-group}, $G = G/\{1_G\}$ is sofic.
\end{proof}

We now prove results  around limits of sofic actions   in the Gromov-Hausdorff topology.

\begin{thm}\label{gromov-limit}
    Let $G$ be a countable discrete group. Suppose $(\Gamma_n, v_n, \alpha_n) \to (\Gamma, v, \alpha)$ in the Gromov-Hausdorff topology and each $\alpha_n: G \curvearrowright \Gamma_n$ is a transitive sofic action. Then $\alpha$ is sofic.
\end{thm}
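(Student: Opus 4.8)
The plan is to use the fact that soficity of a group action is a purely \emph{local} condition: an $(F, W, \epsilon)$-orbit approximation only constrains $\varphi$ on the finite set $F$ relative to the finite set $W$ of vertices. Gromov-Hausdorff convergence guarantees that, for $n$ large, a neighborhood of $v_n$ in $\Gamma_n$ is isomorphic (as a pointed $G$-set-with-edges, on the relevant finite piece) to the corresponding neighborhood of $v$ in $\Gamma$, so one can simply transport an orbit approximation of $\alpha_n$ to one of $\alpha$.

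Concretely, since every action in sight is transitive, fix $v$, put $H = \Stab_\alpha(v)$, identify $V = G/H$ with $\alpha$ the left-translation action, and fix an arbitrary section $\sigma : G/H \to G$; similarly put $H_n = \Stab_{\alpha_n}(v_n)$ and identify $V_n = G/H_n$. Given finite $F \subset G$, $W \subset G/H$ and $\epsilon > 0$, I would first enlarge $F$ to a finite set $F_1 \subset G$ containing, besides $F$, all elements $\sigma(x)^{-1}\sigma(y)$ with $x,y \in W$ and all elements $\sigma(\alpha(g^{-1})x)^{-1}g^{-1}\sigma(x)$ with $g \in F$, $x \in W$, $\alpha(g^{-1})x \in W$ (a finite list). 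By the definition of the Gromov-Hausdorff topology there is $N$ so that for all $n \geq N$ and all $g \in F_1$: $g \in H_n$ iff $g \in H$, and $v_n$ is connected to $\alpha_n(g)v_n$ iff $v$ is connected to $\alpha(g)v$. Fix one such $n$. Then I would verify that $\iota : W \to V_n$, $\iota(x) = \alpha_n(\sigma(x))v_n$, is injective, that it is a graph isomorphism of $(W, E|_{W\times W})$ onto the induced subgraph on $W_n := \iota(W)$, and that $\iota(\alpha(g^{-1})x) = \alpha_n(g^{-1})\iota(x)$ whenever $g \in F$, $x \in W$, $\alpha(g^{-1})x \in W$. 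Each of these claims, after translating the basepoint by $\alpha_n(\sigma(x)^{-1})$ or $\alpha_n(\sigma(\alpha(g^{-1})x)^{-1})$, becomes a statement about membership in $H_n$ versus $H$, or about $v_n \sim \alpha_n(\cdot)v_n$ versus $v \sim \alpha(\cdot)v$, for a group element lying in $F_1$ --- exactly what the choice of $N$ delivers.

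Now apply soficity of $\alpha_n$ to the finite data $F \subset G$, $W_n \subset V_n$, $\epsilon > 0$: this yields a finite set $A$, a unital $(F,\epsilon)$-multiplicative map $\varphi : G \to \Sym(A)$, a finite graph $B$, a subset $S \subset A$ with $|S| > (1-\epsilon)|A|$, and graph embeddings $\pi_s : (W_n, E_n|_{W_n\times W_n}) \hookrightarrow B$ for $s \in S$ satisfying the $\alpha_n$ orbit-approximation identity. Keeping the same $A, \varphi, B, S$, set $\pi'_s = \pi_s \circ \iota : (W, E|_{W\times W}) \hookrightarrow B$; since $\iota$ is a graph isomorphism onto the induced subgraph on $W_n$ and each $\pi_s$ is a graph embedding, each $\pi'_s$ is a graph embedding. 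For $s \in S$, $g \in F$, $x \in W$ with $\varphi(g)s \in S$ and $\alpha(g^{-1})x \in W$, the intertwining property gives $\alpha_n(g^{-1})\iota(x) = \iota(\alpha(g^{-1})x) \in W_n$, so the hypothesis of the $\alpha_n$-identity holds and $\pi'_{\varphi(g)s}(x) = \pi_{\varphi(g)s}(\iota(x)) = \pi_s(\alpha_n(g^{-1})\iota(x)) = \pi_s(\iota(\alpha(g^{-1})x)) = \pi'_s(\alpha(g^{-1})x)$. Hence $\varphi$ is unital, $(F,\epsilon)$-multiplicative, and an $(F, W, \epsilon)$-orbit approximation of $\alpha$; as $F, W, \epsilon$ were arbitrary, $\alpha$ is sofic.

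The argument is essentially bookkeeping; the one point requiring care is that one is working with two different coset spaces $G/H$ and $G/H_n$, so one must pass through a fixed, a priori merely set-theoretic, section $\sigma$, collect in advance the finitely many group elements that arise when comparing $W$-data in the two graphs, and only then invoke Gromov-Hausdorff convergence on that finite set. No property of $\alpha$ is used beyond transitivity, which serves only to identify $V$ with $G/H$.
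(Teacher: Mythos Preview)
Your proof is correct and follows essentially the same approach as the paper's: both pick a section $\sigma$, collect the finitely many group elements $\sigma(x)^{-1}\sigma(y)$ and $\sigma(\alpha(g^{-1})x)^{-1}g^{-1}\sigma(x)$ (the paper uses the inverses, which is equivalent), invoke Gromov--Hausdorff convergence on that finite set to produce the graph isomorphism $\iota\colon W\to W_n$, $\iota(x)=\alpha_n(\sigma(x))v_n$, and then precompose the $(F,W_n,\epsilon)$-orbit approximation embeddings of $\alpha_n$ with $\iota$. The only cosmetic difference is that the paper defines $\sigma$ just on $W$ rather than on all of $G/H$, and does not bother to include $F$ itself in the enlarged finite set.
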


\begin{proof}
    Fix finite subsets $F \subset G$, $W \subset V$, and $\epsilon > 0$. Since $\alpha$ is transitive, we may fix a map $\sigma: W \to G$ s.t. $\alpha(\sigma(w))v = w$ for all $w \in W$. Let,
    \begin{equation*}
        F' = \sigma(W)^{-1}\sigma(W) \cup \{\sigma(w)^{-1}g\sigma(\alpha(g^{-1})w): g \in F, w \in W\}
    \end{equation*}
    
    Then by definition of the Gromov-Hausdorff topology, there exists $n > 0$ s.t. for any $g \in F'$, we have $g \in \Stab_{\alpha_n}(v_n)$ iff $g \in \Stab_\alpha(v)$; and $v_n$ is connected to $\alpha_n(g)v_n$ iff $v$ is connected to $\alpha(g)v$.

    Now, let $W_n = \{\alpha_n(\sigma(w))v_n: w \in W\}$. As $\alpha_n$ is sofic, there exists $\varphi: G \to \Sym(A)$ which is unital, $(F, \epsilon)$-multiplicative, and an $(F, W_n, \epsilon)$-orbit approximation, i.e., there exists a finite graph $B$, a subset $S \subset A$ with $|S| > (1 - \epsilon)|A|$, and for each $s \in S$, a graph embedding $\pi_s': (W_n, E_n|_{W_n \times W_n}) \hookrightarrow B$ s.t. $\pi_{\varphi(g)s}'(x) = \pi_s'(\alpha(g^{-1})x)$ for all $s \in S$, $g \in F$, $x \in W_n$, whenever $\varphi(g)s \in S$ and $\alpha(g^{-1})x \in W_n$.
    
    It suffices to show $\varphi$ is also an $(F, W, \epsilon)$-orbit approximation. For each $s \in S$, $w \in W$, we define,
    \begin{equation*}
        \pi_s(w) = \pi_s'(\alpha_n(\sigma(w))v_n)
    \end{equation*}

    To show this is a graph embedding, as $\pi_s'$ is one, it suffices to show the map $\psi: (W, E|_{W \times W}) \to (W_n, E_n|_{W_n \times W_n})$, $\psi(w) = \alpha_n(\sigma(w))v_n$, is a graph isomorphism. It is clearly surjective. For $w_1, w_2 \in W$, since $\sigma(W)^{-1}\sigma(W) \subset F'$, we have,
    \begin{equation*}
    \begin{split}
        \alpha(\sigma(w_1))v = \alpha(\sigma(w_2))v &\Leftrightarrow v = \alpha(\sigma(w_1)^{-1}\sigma(w_2))v\\
        &\Leftrightarrow v_n = \alpha_n(\sigma(w_1)^{-1}\sigma(w_2))v_n\\
        &\Leftrightarrow \alpha_n(\sigma(w_1))v_n = \alpha_n(\sigma(w_2))v_n
    \end{split}
    \end{equation*}
    i.e., $w_1 = w_2$ iff $\psi(w_1) = \psi(w_2)$, so $\psi$ is injective. By the same argument, $w_1$ and $w_2$ are connected iff $\psi(w_1)$ and $\psi(w_2)$ are connected. This shows that $\psi$ is a graph isomorphism, whence $\pi_s$ is a graph embedding.

    Now, let $s \in S$, $g \in F$, $w \in W$ s.t. $\varphi(g)s \in S$ and $\alpha(g^{-1})w \in W$. This implies $\alpha(\sigma(\alpha(g^{-1})w))v = \alpha(g^{-1})w = \alpha(g^{-1}\sigma(w))v$, so,
    \begin{equation*}
        \sigma(w)^{-1}g\sigma(\alpha(g^{-1})w) \in \Stab_\alpha(v)
    \end{equation*}

    By definition of $F'$, $\sigma(w)^{-1}g\sigma(\alpha(g^{-1})w) \in F'$, so $\sigma(w)^{-1}g\sigma(\alpha(g^{-1})w) \in \Stab_{\alpha_n}(v_n)$, i.e.,
    \begin{equation*}
        v_n = \alpha_n(\sigma(w)^{-1}g\sigma(\alpha(g^{-1})w))v_n \Rightarrow \alpha_n(g^{-1})\alpha_n(\sigma(w))v_n = \alpha_n(\sigma(\alpha(g^{-1})w))v_n
    \end{equation*}

    Thus, both $\alpha_n(\sigma(w))v_n \in W_n$ and $\alpha_n(g^{-1})\alpha_n(\sigma(w))v_n \in W_n$. We therefore have,
    \begin{equation*}
    \begin{split}
        \pi_{\varphi(g)s}(w) &= \pi_{\varphi(g)s}'(\alpha_n(\sigma(w))v_n)\\
        &= \pi_s'(\alpha_n(g^{-1})\alpha_n(\sigma(w))v_n)\\
        &= \pi_s'(\alpha_n(\sigma(\alpha(g^{-1})w))v_n)\\
        &= \pi_s(\alpha(g^{-1})w)
    \end{split}
    \end{equation*}
\end{proof}

The following is immediate:
\begin{prop}\label{chabauty-conv}
    If $(\Gamma_n, v_n, \alpha_n) \to (\Gamma, v, \alpha)$ in the Gromov-Hausdorff topology, then $\Stab_{\alpha_n}(v_n) \to \Stab_\alpha(v)$ in the Chabauty topology. If $\Gamma_n$ and $\Gamma$ are either all fully disconnected or all complete, then the converse also holds.
\end{prop}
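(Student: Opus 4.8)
The plan is to unwind the two topologies directly. Recall that $H_n \to H$ in the Chabauty topology means exactly that for each fixed $g \in G$ there is $N$ with $g \in H_n \Leftrightarrow g \in H$ for all $n \geq N$, and that the defining condition of Gromov--Hausdorff convergence has two clauses: a stabilizer clause and an adjacency clause.

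For the forward implication I would simply specialize the definition of Gromov--Hausdorff convergence to singletons: given $g \in G$, apply the convergence hypothesis to $F = \{g\}$ to get an $N$ beyond which $g \in \Stab_{\alpha_n}(v_n) \Leftrightarrow g \in \Stab_\alpha(v)$. This is precisely pointwise convergence $1_{\Stab_{\alpha_n}(v_n)} \to 1_{\Stab_\alpha(v)}$, hence $\Stab_{\alpha_n}(v_n) \to \Stab_\alpha(v)$ in the Chabauty topology. Note that the adjacency clause in the definition of Gromov--Hausdorff convergence plays no role here.

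For the converse, under the hypothesis that all the graphs are complete or all are edgeless, I would fix a finite $F \subset G$ and use Chabauty convergence (valid uniformly over the finite set $F$) to produce $N$ with $g \in \Stab_{\alpha_n}(v_n) \Leftrightarrow g \in \Stab_\alpha(v)$ for all $n \geq N$ and all $g \in F$ — this already supplies the stabilizer clause of Gromov--Hausdorff convergence. The remaining point is the adjacency clause: for $n \geq N$ and $g \in F$, one needs $v_n$ adjacent to $\alpha_n(g)v_n$ iff $v$ adjacent to $\alpha(g)v$. In the edgeless case both sides are false (graphs have no self-loops, so even when $\alpha_n(g)v_n = v_n$ there is no edge), so the equivalence is automatic. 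In the complete case, two vertices are adjacent iff they are distinct, so $v_n$ is adjacent to $\alpha_n(g)v_n$ iff $g \notin \Stab_{\alpha_n}(v_n)$, and likewise for $v$; the adjacency equivalence is then just the negation of the stabilizer equivalence already established.

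I do not expect a genuine obstacle here: the statement is essentially bookkeeping once the definitions are written out. The only subtlety worth flagging is the no-self-loops convention, which is exactly what makes both special cases collapse to the stabilizer condition; it is also worth remarking that outside the complete/edgeless setting the converse genuinely fails, since the adjacency of $v_n$ to $\alpha_n(g)v_n$ is extra data that the Chabauty limit of the stabilizers cannot detect.
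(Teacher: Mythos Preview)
Your proposal is correct and is precisely the direct unwinding of the definitions that the paper has in mind; indeed, the paper states this proposition without proof, prefacing it with ``The following is immediate.'' Your observation that the no-self-loops convention is what makes the edgeless and complete cases collapse to the stabilizer condition is exactly the point.
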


Combining the above with item 4 of Proposition \ref{permanence_simple}, we immediately retrieve,

\begin{cor}[Propostion 5 of \cite{gao2024actionslerfgroupssets}]
    Let $(H_n)$ be a sequence of subgroups of $G.$ If $G\curvearrowright G/H_n$ is sofic for each $n$ and $H_n\to H$ in the Chabauty topology, then $G\curvearrowright G/H$ is sofic.
\end{cor}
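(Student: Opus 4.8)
The plan is to reduce everything to Theorem \ref{gromov-limit} by encoding each coset space $G/H_n$ as a fully disconnected graph and observing that Chabauty convergence of the subgroups is exactly Gromov--Hausdorff convergence of the corresponding pointed graph actions.

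First I would set up notation. For each $n$ let $\Gamma_n$ be the graph with vertex set $G/H_n$ and empty edge set, let $v_n \in G/H_n$ be the identity coset, and let $\alpha_n : G \curvearrowright \Gamma_n$ be the left-multiplication action; define $\Gamma$, $v$, $\alpha$ analogously from $H$. Each $\alpha_n$ is transitive, since left multiplication on a coset space is transitive. By hypothesis $G \curvearrowright G/H_n$ is sofic in the sense of \cite{gao2024soficity}, so the second implication in item 4 of Proposition \ref{permanence_simple} shows that $\alpha_n : G \curvearrowright (G/H_n, \varnothing)$ is a sofic graph action.

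Next I would check $(\Gamma_n, v_n, \alpha_n) \to (\Gamma, v, \alpha)$ in the Gromov--Hausdorff topology. Since $\Gamma_n$ and $\Gamma$ are all fully disconnected, the converse direction of Proposition \ref{chabauty-conv} applies, and this convergence is equivalent to $\Stab_{\alpha_n}(v_n) \to \Stab_\alpha(v)$ in the Chabauty topology. But the stabilizer of the identity coset under $G \curvearrowright G/H_n$ is exactly $H_n$, and likewise $\Stab_\alpha(v) = H$, so the required Gromov--Hausdorff convergence is precisely the hypothesis $H_n \to H$. Theorem \ref{gromov-limit} then yields that $\alpha : G \curvearrowright (G/H, \varnothing)$ is sofic, and the first implication in item 4 of Proposition \ref{permanence_simple} upgrades this to soficity of $G \curvearrowright G/H$ in the sense of \cite{gao2024soficity}, which is the claim.

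Every step here is an immediate appeal to a result already established in the paper, so there is essentially no genuine obstacle; the only point demanding a little care is the bookkeeping identification $\Stab_{\alpha_n}(v_n) = H_n$ and lining up the ``fully disconnected'' hypothesis of Proposition \ref{chabauty-conv} with the empty-edge-set graphs used to encode the set actions. (One could equally phrase the whole argument using complete graphs instead of empty ones, using the other cases in item 4 of Proposition \ref{permanence_simple} and Proposition \ref{chabauty-conv}.)
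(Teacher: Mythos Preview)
Your proof is correct and matches the paper's approach exactly: the paper simply states that the corollary is obtained by combining Proposition \ref{chabauty-conv} (the equivalence of Chabauty and Gromov--Hausdorff convergence for fully disconnected or complete graphs) with Theorem \ref{gromov-limit} and item 4 of Proposition \ref{permanence_simple}, which is precisely what you spelled out.
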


We observe the following easy lemma, which leads to a more algebraic characterization of Gromov-Hausdorff convergence:

\begin{lem}\label{chara-of-edges}
    Let $G$ be a countable discrete group. A transitive action of $G$ on a graph with a distinguished vertex is specified by,
    \begin{enumerate}
        \item A subgroup $H \leq G$;
        \item A symmetric subset $S \subset G$, which we shall call the \emph{characteristic set of edges}, s.t. $HSH \subset S$ and $S \cap H = \varnothing$. Equivalently, a subset $S' \subset H \backslash G/H$ of the double coset space which is invariant under the involution $HgH \mapsto Hg^{-1}H$ and $H \notin S'$. ($S' = \{HgH: g \in S\}$ and $S = \bigcup S'$.)
    \end{enumerate}

    The graph being acted on is then isomorphic to $\Gamma = (G/H, E)$, where $E$ is s.t. $(gH, kH) \in E$ iff $k = gs$ for some $s \in S$, while the distinguished vertex is sent to $H$ by the isomorphism and the action of $G$ is carried over by the isomorphism to the left multiplication action of $G$ on $G/H$. Conversely, any such choice of $H$ and $S$ (or equivalently $S'$) yields a transitive action of $G$ on a graph with distinguished vertex $H$, as described above.
\end{lem}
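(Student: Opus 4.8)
The plan is to run the standard orbit--stabilizer identification and then simply track how the edge set transports under it. Given a transitive action $\alpha: G \curvearrowright \Gamma = (V,E)$ with distinguished vertex $v_0$, I would set $H = \Stab_\alpha(v_0)$. Transitivity makes $gH \mapsto \alpha(g)v_0$ a well-defined $G$-equivariant bijection $G/H \to V$ (intertwining the left multiplication action with $\alpha$), so one may identify $(V,\alpha)$ with $G/H$ equipped with left multiplication, the vertex $v_0$ corresponding to $H = 1\cdot H$. I would then define $S = \{g \in G : (v_0,\alpha(g)v_0) \in E\}$, which under the identification is $\{g\in G : (H,gH)\in E\}$.

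Next I would verify the three properties of $S$ by pushing edges incident to $v_0$ around using the automorphisms $\alpha(h)$, $h \in H$, which all fix $v_0$. Since $\Gamma$ has no self-loops, $S \cap H = \varnothing$; undirectedness together with applying $\alpha(g^{-1})$ gives $(v_0,\alpha(g)v_0)\in E \iff (v_0,\alpha(g^{-1})v_0)\in E$, so $S$ is symmetric; and applying $\alpha(h_1^{-1})$ to the pair $(v_0,\alpha(h_1sh_2)v_0) = (v_0,\alpha(h_1s)v_0)$ shows $HSH \subseteq S$, whence $S = 1\cdot S\cdot 1 \subseteq HSH$ and in fact $HSH = S$, i.e. $S$ is a union of $(H,H)$-double cosets. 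Finally, applying $\alpha(g^{-1})$ to $(\alpha(g)v_0,\alpha(k)v_0)$ yields $(gH,kH)\in E \iff g^{-1}k \in S \iff k\in gS$, which is exactly the asserted description of $E$; the condition $HSH = S$ is precisely what makes the statement ``$g^{-1}k\in S$'' independent of the chosen coset representatives.

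For the converse I would start from $H \leq G$ and a symmetric $S$ with $HSH \subseteq S$ and $S\cap H = \varnothing$, and define $\Gamma = (G/H, E)$ by declaring $(gH,kH)\in E$ iff $g^{-1}k\in S$. Well-definedness is again the identity $HSH = S$ (which follows from $HSH\subseteq S$); symmetry of $E$ comes from symmetry of $S$; absence of self-loops comes from $1\in H$ and $S\cap H = \varnothing$. Left multiplication preserves $E$ since $(xg)^{-1}(xk) = g^{-1}k$, the action is visibly transitive, and $\Stab(H) = H$. Composing the two constructions returns the original data up to the canonical isomorphism, giving the claimed bijection. The dictionary with $S'$ is then immediate: unions of double cosets correspond to subsets of $H\backslash G/H$, symmetry of $S$ corresponds to invariance of $S'$ under $HgH\mapsto Hg^{-1}H$, and $S\cap H = \varnothing$ corresponds to $H\notin S'$.

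There is no real obstacle here; the only care needed is the bookkeeping with left cosets versus double cosets --- in particular recording that well-definedness of the edge relation on $G/H$ forces exactly the double-coset condition $HSH = S$, and that moving edges incident to $v_0$ legitimately uses that every $\alpha(h)$, $h\in H$, is a graph automorphism fixing $v_0$. I would present the forward direction (extracting $(H,S)$ from the action) first, then the reverse construction, and close with the $S\leftrightarrow S'$ translation.
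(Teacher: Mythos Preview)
Your proposal is correct and follows essentially the same route as the paper: identify $V$ with $G/H$ via orbit--stabilizer, define $S$ as the set of $g$ with $(H,gH)\in E$, verify $HSH\subset S$, symmetry, and $S\cap H=\varnothing$ by pushing edges around with left multiplication, and then check the converse construction is well-defined for the same reasons. If anything, your write-up is slightly more thorough than the paper's (you explicitly note that the two constructions are mutually inverse and spell out the $S\leftrightarrow S'$ dictionary), but there is no substantive difference in strategy.
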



\begin{proof}
    We first note here how to retrieve $S$ from an action $G \curvearrowright \Gamma = (G/H, E)$, where the action on the vertex set is the left-multiplication action. $S$ is simply given by,
    \begin{equation*}
        S = \bigcup\{gH: (H, gH) \in E\}
    \end{equation*}

    Clearly $SH \subset S$. To show $HS \subset S$ as well, we observe that if $(H, gH) \in E$, then left multiplying by any $h \in H$ yields $(H = hH, hgH) \in E$. To show $S$ is symmetric, we observe that if $(H, gH) \in E$, then left multiplying by $g^{-1}$ yields $(g^{-1}H, H) \in E$, so as $\Gamma$ is undirected, $(H, g^{-1}H) \in E$. That $S \cap H = \varnothing$ follows from $(H, H) \notin E$.

    To show $(gH, kH) \in E$ iff $k = gs$ for some $s \in S$, we note that left multiplying by $g^{-1}$ yields $(gH, kH) \in E$ iff $(H, g^{-1}kH) \in E$ iff $g^{-1}k \in S$ iff $k = gs$ for some $s \in S$.

    For the converse, we simply note the graph and the action is well-defined. Indeed, note that as $HSH \subset S$, the relation $k = gs$ for some $s \in S$ between $gH$ and $kH$ does not depend on the choice of representatives of cosets $g$ and $k$. And, as $S$ is symmetric and $S \cap H = \varnothing$, $\Gamma = (G/H, E)$ is indeed a well-defined simple, undirected graph. Finally, the relation $k = gs$ is clearly preserved by left multiplication, i.e., for any $a \in G$, $k = gs$ iff $ak = ags$.
\end{proof}

The following proposition, which can be understood as an improvement on Proposition \ref{chabauty-conv}, is now a easy consequence of the proposition above and the definition of Gromov-Hausdorff topology:

\begin{prop}\label{gromov-characterization}
    For each $n$, let $(\Gamma_n, v_n, \alpha_n) \in \cC$ (where $\cC$ is as in Definition \ref{g-h-top-defn}). Let $(\Gamma, v, \alpha) \in \cC$. For each $n$, let $H_n = \Stab_{\alpha_n}(v_n)$ and let $S_n$ be the characteristic set of edges as given by Lemma \ref{chara-of-edges}. Similarly, let $H = \Stab_\alpha(v)$ and $S$ be the characteristic set of edges as given by Lemma \ref{chara-of-edges}. Then $(\Gamma_n, v_n, \alpha_n) \to (\Gamma, v, \alpha)$ iff $1_{H_n} \to 1_H$ pointwise (i.e., $H_n \to H$ in the Chabauty topology) and $1_{S_n} \to 1_S$ pointwise.
\end{prop}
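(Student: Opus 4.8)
The plan is to unwind the definition of the Gromov--Hausdorff topology and observe that its two clauses correspond precisely to the two claimed pointwise convergences, with Lemma~\ref{chara-of-edges} providing the dictionary. First I would note that both $\Stab_{\alpha_n}(v_n)$ and, for each $g \in G$, the truth value of ``$v_n$ is connected to $\alpha_n(g)v_n$'' are invariants of the isomorphism class of the triple $(\Gamma_n, v_n, \alpha_n)$ (an isomorphism of such triples carries stabilizers to stabilizers and preserves the edge relation). Hence, without loss of generality, I may replace each $(\Gamma_n, v_n, \alpha_n)$ by its canonical model from Lemma~\ref{chara-of-edges}, namely $\Gamma_n = (G/H_n, E_n)$ with distinguished vertex $v_n = H_n$ and $\alpha_n$ the left-multiplication action, and likewise the limit by $(G/H, E)$ with $v = H$.

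In this canonical model $\Stab_{\alpha_n}(v_n) = \{g \in G : gH_n = H_n\} = H_n$, so the assertion ``$g \in \Stab_{\alpha_n}(v_n)$'' is literally $1_{H_n}(g) = 1$, and similarly ``$g \in \Stab_\alpha(v)$'' is $1_H(g) = 1$. For the edge clause I would invoke the description of the characteristic set of edges in Lemma~\ref{chara-of-edges}: $(H_n, gH_n) \in E_n$ holds if and only if $gH_n \subset S_n$, i.e. if and only if $g \in S_n$. Since $gH_n = \alpha_n(g)v_n$ and $H_n = v_n$, this says precisely that ``$v_n$ is connected to $\alpha_n(g)v_n$'' is equivalent to $1_{S_n}(g) = 1$; the same holds for the limit with $S$ in place of $S_n$.

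Plugging these two identifications into Definition~\ref{g-h-top-defn}, the statement $(\Gamma_n, v_n, \alpha_n) \to (\Gamma, v, \alpha)$ becomes: for every finite $F \subset G$ there is $N$ such that for all $n \geq N$ and all $g \in F$ one has $1_{H_n}(g) = 1_H(g)$ and $1_{S_n}(g) = 1_S(g)$. Since it suffices to verify this for each singleton $F = \{g\}$ and then, given an arbitrary finite $F$, take the maximum of the finitely many resulting thresholds, this condition is equivalent to: for every $g \in G$, eventually $1_{H_n}(g) = 1_H(g)$ and eventually $1_{S_n}(g) = 1_S(g)$, i.e. $1_{H_n} \to 1_H$ pointwise and $1_{S_n} \to 1_S$ pointwise. (The equivalence $1_{H_n} \to 1_H$ pointwise $\iff$ $H_n \to H$ in the Chabauty topology is the standard one, already recorded in the discussion after Theorem~\ref{gromov-limit}.)

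There is no substantive obstacle here; the argument is purely a matter of bookkeeping. The only point requiring care is the reduction to canonical models and the verification that the ``connectedness'' clause of the Gromov--Hausdorff topology matches the indicator of the characteristic set of edges rather than, say, of some coset-translate of it --- and this is exactly what the explicit formula $S = \bigcup\{gH : (H,gH) \in E\}$ from the proof of Lemma~\ref{chara-of-edges} makes transparent.
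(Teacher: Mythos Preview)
Your proposal is correct and follows essentially the same approach as the paper, which does not give a detailed proof but simply remarks that the proposition is ``an easy consequence of [Lemma~\ref{chara-of-edges}] and the definition of Gromov-Hausdorff topology.'' Your write-up is precisely the unwinding the paper gestures at: pass to the canonical model of Lemma~\ref{chara-of-edges}, identify the stabilizer clause with $1_{H_n}$ and the edge clause with $1_{S_n}$, and reduce the finite-set condition to singletons.
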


We now have the following corollary of the proposition above and Theorem \ref{gromov-limit},

\begin{cor}\label{gromov cor}
    Let $G$ be a countable discrete group. Let $\cH$ be the class of all $H \leq G$ s.t. every transitive action of $G$ on a graph with the stabilizer of some vertex being $H$ is sofic. Then $\cH$ is closed under taking increasing unions.
\end{cor}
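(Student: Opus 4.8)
The plan is to realize the increasing union $H = \bigcup_n H_n$ (where $H_1 \leq H_2 \leq \cdots$ are all in $\cH$) as a Gromov-Hausdorff limit of transitive actions of $G$ on graphs, each of which has a vertex with stabilizer one of the $H_n$, and then invoke Theorem \ref{gromov-limit}. The key realization is that it is not enough to track stabilizers alone: by Proposition \ref{gromov-characterization}, convergence in the Gromov-Hausdorff topology requires both $1_{H_n} \to 1_H$ pointwise \emph{and} $1_{S_n} \to 1_S$ pointwise, where $S$, $S_n$ are the characteristic sets of edges. So given an arbitrary transitive action $\alpha$ of $G$ on a graph $\Gamma$ with a distinguished vertex $v$ having $\Stab_\alpha(v) = H$, with characteristic set of edges $S \subset G$ (so $HSH \subset S$, $S \cap H = \varnothing$, $S = S^{-1}$, as in Lemma \ref{chara-of-edges}), I would build for each $n$ an action $\alpha_n$ via Lemma \ref{chara-of-edges} by choosing the pair $(H_n, S_n)$ with $S_n := \{ g \in S : H_n g H_n \subset S\}$, i.e. the largest $H_n$-bi-invariant subset of $S$. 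One checks $S_n$ is symmetric (since $S$ is and $H_n = H_n^{-1}$), $H_n S_n H_n \subset S_n$ by construction, and $S_n \cap H_n \subset S \cap H = \varnothing$; so $(H_n, S_n)$ indeed defines a transitive action $\alpha_n : G \curvearrowright \Gamma_n := (G/H_n, E_n)$ with distinguished vertex $v_n := H_n$ and $\Stab_{\alpha_n}(v_n) = H_n \in \cH$, hence $\alpha_n$ is sofic by hypothesis.

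Next I would verify the two pointwise-convergence conditions. For stabilizers: $1_{H_n} \to 1_H$ pointwise is exactly the statement that $H = \bigcup_n H_n$ with the $H_n$ increasing, which is immediate. For the characteristic sets: first, $S_n \subset S$ for all $n$, and second, if $g \in S$ then since $HSH \subset S$ and $H = \bigcup_n H_n$, one can find $n$ large enough that a fixed finite generating set of the relevant finite portion of $H$ lies in $H_n$ — more carefully, I claim $S = \bigcup_n S_n$. Indeed, fix $g \in S$; I want some $n$ with $H_n g H_n \subset S$. This is the delicate point (see below). Granting it, $S_n$ is increasing: if $m \leq n$ then $H_m \subset H_n$, so $H_m g H_m \subset H_n g H_n$, hence $H_n g H_n \subset S$ implies $H_m g H_m \subset S$, giving $S_m \subset S_n$; combined with $S = \bigcup_n S_n$ this yields $1_{S_n} \to 1_S$ pointwise. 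By Proposition \ref{gromov-characterization}, $(\Gamma_n, v_n, \alpha_n) \to (\Gamma, v, \alpha)$ in the Gromov-Hausdorff topology. Since each $\alpha_n$ is a transitive sofic action, Theorem \ref{gromov-limit} gives that $\alpha$ is sofic. As $\alpha$ was an arbitrary transitive action of $G$ on a graph with a vertex stabilized by $H$, this shows $H \in \cH$.

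The main obstacle is establishing $S = \bigcup_n S_n$, i.e. that every $g \in S$ has $H_n g H_n \subset S$ for some $n$. This is \emph{not} automatic from $HSH \subset S$ together with $H = \bigcup H_n$, because $H_n g H_n$ could be infinite even though each individual product $h g h'$ lies in $S$. The fix is to use the double-coset reformulation from Lemma \ref{chara-of-edges}: $S$ is a union of double cosets $HgH$, and $g \in S$ means the \emph{entire} double coset $HgH$ is contained in $S$. Now $HgH = \bigcup_n H_n g H_n$ (an increasing union since the $H_n$ are increasing), but I need a single $n$. Here I would argue: for any $h, h' \in H$, say $h \in H_{k_1}$, $h' \in H_{k_2}$; then $hgh' \in H_n g H_n$ for $n = \max(k_1,k_2)$. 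The point is that I do \emph{not} need $H_n g H_n = HgH$; I only need $H_n g H_n \subset S$, and since every element of $H_n g H_n$ is of the form $h g h'$ with $h, h' \in H_n \subset H$, and every such element lies in $HgH \subset S$, we get $H_n g H_n \subset HgH \subset S$ \emph{for every $n$}. So in fact $S_n = S$ for all $n$, the convergence of characteristic sets is trivial, and only the Chabauty convergence of stabilizers does any work. I should double-check this simplification carefully — it hinges on the fact that membership $g \in S$ (in the sense of Lemma \ref{chara-of-edges}) already encodes that all of $HgH$, hence a fortiori all of $H_n g H_n$, lies in the edge set — but assuming it holds, the proof is short: take $S_n := S$ and $H_n$ as given, check $(H_n, S_n)$ defines a valid action with stabilizer $H_n$, note this action is sofic by hypothesis, and apply Proposition \ref{gromov-characterization} and Theorem \ref{gromov-limit}.
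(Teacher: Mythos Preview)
Your proposal is correct and, after the detour, lands on exactly the paper's argument: since $H_n \subset H$ gives $H_n S H_n \subset HSH \subset S$ and $S \cap H_n \subset S \cap H = \varnothing$, one simply takes $S_n = S$ for every $n$, and then Proposition~\ref{gromov-characterization} plus Theorem~\ref{gromov-limit} finish. Your preliminary definition of $S_n$ as the largest $H_n$-bi-invariant subset of $S$ was unnecessary (and your monotonicity computation there actually yields $S_n \subset S_m$ for $m \le n$, not the reverse), but as you correctly observe this is moot because $S_n = S$ for all $n$.
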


\begin{proof}
    Let $(H_n)_{n \in \mathbb{N}}$ be an increasing sequence in $\cH$ and $H = \cup_n H_n$. Let $(\Gamma, v, \alpha) \in \cC$ with $\Stab_\alpha(v) = H$. Let $S$ be the characteristic set of edges as given by Lemma \ref{chara-of-edges}. Then $S$ is symmetric; $HSH \subset S$, so $H_nSH_n \subset S$ as $H_n \subset H$; and $S \cap H = \varnothing$, so $S \cap H_n = \varnothing$ as $H_n \subset H$. Thus, $S$ and $H_n$ determines $(\Gamma_n, v_n, \alpha_n) \in \cC$ with $\Stab_{\alpha_n}(v_n) = H_n$ and the characteristic set of edges being $S$. As $1_{H_n} \to 1_H$ and $1_S \to 1_S$, by Proposition \ref{gromov-characterization}, $(\Gamma_n, v_n, \alpha_n) \to (\Gamma, v, \alpha)$. By assumptions, $\alpha_n$ is sofic for all $n$, so by Theorem \ref{gromov-limit}, $\alpha$ is sofic. Since $\alpha$ is an arbitrary action with the stabilizer of a vertex being $H$, this shows $H \in \cH$.
\end{proof}

Using Remark \ref{finite-graph-rem} and the corollary above, we have an immediate corollary, in analogy with the result in \cite{gao2024actionslerfgroupssets} that all actions of LERF groups are sets on sofic:

\begin{cor}
    Let $(\Gamma, v, \alpha) \in \cC$. Let $S$ be the characteristic set of edges as given by Lemma \ref{chara-of-edges}. If there exists a decreasing sequence of finite index subgroups $H_n$ and a sequence of symmetric subset $S_n \subset G$ with $H_nS_nH_n \subset S_n$ and $S_n \cap H_n = \varnothing$, s.t. $H_n \searrow \Stab_\alpha(v)$ and $1_{S_n} \to 1_S$ pointwise, then $\alpha$ is sofic. In particular, if $G$ is a group for which: whenever $(\Gamma, v, \alpha) \in \cC$ is s.t. $\Stab_\alpha(v)$ is finitely generated, then the conditions above hold, then all transitive actions of $G$ on graphs are sofic.
\end{cor}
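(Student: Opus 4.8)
The plan is to exhibit $\alpha$ as a Gromov--Hausdorff limit of transitive actions on \emph{finite} graphs, which are automatically sofic, and then apply Theorem~\ref{gromov-limit}. Write $H=\Stab_\alpha(v)$. For each $n$, the pair $(H_n,S_n)$ satisfies the hypotheses of the converse direction of Lemma~\ref{chara-of-edges}: $H_n\le G$, $S_n$ is symmetric, $H_nS_nH_n\subset S_n$, and $S_n\cap H_n=\varnothing$. Hence it determines a triple $(\Gamma_n,v_n,\alpha_n)\in\cC$, namely a transitive action of $G$ on the graph with vertex set $G/H_n$ having $\Stab_{\alpha_n}(v_n)=H_n$ and characteristic set of edges $S_n$. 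Since $[G:H_n]<\infty$, the vertex set $G/H_n$ is finite, so $\Gamma_n$ is a finite graph; therefore $\alpha_n$ is sofic by Remark~\ref{finite-graph-rem}.

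Next I would verify convergence. Because $H_n\searrow H$, every $g\in H$ lies in all $H_n$ and every $g\notin H$ lies outside $H_n$ for all large $n$, so $1_{H_n}\to 1_H$ pointwise (i.e.\ $H_n\to H$ in the Chabauty topology). Combined with the standing hypothesis $1_{S_n}\to 1_S$ pointwise, Proposition~\ref{gromov-characterization} gives $(\Gamma_n,v_n,\alpha_n)\to(\Gamma,v,\alpha)$ in the Gromov--Hausdorff topology. Since each $\alpha_n$ is transitive and sofic, Theorem~\ref{gromov-limit} yields that $\alpha$ is sofic, which is the first assertion.

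For the ``in particular'' clause, let $\cH$ be the class from Corollary~\ref{gromov cor}. If $K\le G$ is finitely generated, then for any $(\Delta,w,\beta)\in\cC$ with $\Stab_\beta(w)=K$ the hypothesized conditions hold (by the assumption on $G$, applied to this action), so the first assertion just proved shows $\beta$ is sofic; hence $K\in\cH$. Now let $H\le G$ be arbitrary. As $G$ is countable, $H$ is a countable group, hence an increasing union $H=\bigcup_k K_k$ of finitely generated subgroups $K_k\in\cH$, so $H\in\cH$ by Corollary~\ref{gromov cor}. Finally, any transitive action of $G$ on a graph has, at each vertex, a stabilizer which is a subgroup of $G$ and hence lies in $\cH$, so that action is sofic.

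I do not anticipate a genuine obstacle here: the entire weight of the argument is carried by Theorem~\ref{gromov-limit}, Lemma~\ref{chara-of-edges}, Proposition~\ref{gromov-characterization}, and Remark~\ref{finite-graph-rem}. The only point needing a little care is the last step, where the hypothesis about finitely generated stabilizers is invoked not for $\alpha$ directly but for the auxiliary actions $\beta$, and Corollary~\ref{gromov cor} is then used to pass from finitely generated stabilizers to arbitrary ones via increasing unions.
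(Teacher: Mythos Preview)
Your proof is correct and follows exactly the route the paper intends: the paper merely states that the corollary is immediate from Remark~\ref{finite-graph-rem} and Corollary~\ref{gromov cor}, and you have filled in precisely those details---constructing the finite approximating actions via Lemma~\ref{chara-of-edges}, invoking Proposition~\ref{gromov-characterization} and Theorem~\ref{gromov-limit} for the first assertion, and then bootstrapping from finitely generated to arbitrary stabilizers via Corollary~\ref{gromov cor} for the ``in particular'' clause.
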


\begin{question}
    We observe that groups that satisfy the conditions in the ``in particular" part of the corollary above must be LERF (i.e., all finitely generated subgroups are decreasing intersections of finite index subgroups). Are there nontrivial examples of such groups except finite groups?
\end{question}

\subsection{Graph Wreath Products}

In this subsection we see that, in analogy to \cite{gao2024soficity}, graph wreath products over sofic groups with a sofic action are sofic. 

\begin{defn}
    Let $H$ be a group and $\Gamma=(V,E)$ a graph. For each $v\in V$ denote by $H_v$ an isomorphic copy of $H$. The graph product $*^\Gamma H$ is the group with presentation $$\langle H_v : v\in V \mid [h_v,h_w] = e \text{ if } h_v\in H_v, h_w\in H_w, \text{ and } (v,w)\in E \rangle.$$
    See \cite{Gr90}.

    Let $G\curvearrowright\Gamma$ be an action. Then any graph product $*^\Gamma H$ is acted on by $G$ via automorphisms that permute the vertices. The graph wreath product is then the semi-direct product $*^\Gamma H \rtimes G$.

    The graph product of a tracial von Neumann algebra $M$ over a graph $\Gamma$ is defined in \cite{CaFi17}; it also admits an action of $G$ by automorphisms which permute the vertices, and we can again form the graph wreath product $*^\Gamma M \rtimes G.$
\end{defn}

\begin{rem}
    If $H$ is a sofic group (resp. if $M$ is Connes-embeddable) then $*^\Gamma H$ (resp. $*^\Gamma M$) is sofic \cite{CHR} (resp. Connes-embeddable \cite{caspers2015connes,charlesworth2021matrix}). 
\end{rem}

\begin{defn}
    Let $G$ be a countable discrete group and $A$ be a finite set. Let $\alpha_A:\Sym(A)\curvearrowright A$ be the natural permutation action. The generalized wreath product $G \wr_{\alpha_A}\Sym(A)$ is defined to be the semi-direct product $G^{\oplus A}\rtimes_{\beta_A}\Sym(A)$ where $\beta_A$ is the action $\beta_A:\Sym(A)\curvearrowright G^{\oplus A}$ induced by $\alpha_A$. We define a metric $d_{G,A}$ on this group by 
    $$d_{G,A}(g_1\sigma_1,g_2\sigma_2) = \frac{1}{|A|}|\{a\in A:\sigma_1(a)\neq\sigma_2(a) \text{ or }g_1(\sigma_1(a))\neq g_2(\sigma_2(a))\}|,$$
    where $\sigma_i\in \Sym(A)$ and $g_i\in G^{\oplus A}$ are regarded as functions from $A$ to $G$. The metric $d_{G,A}$ is invariant under both left and right multiplication.
\end{defn}

\begin{lem}[Corollary 3.5 in \cite{gao2024soficity}]\label{sofic-lemma}
    Let $G$ be a countable discrete group. Suppose there exists a free ultrafilter $\cU$ on $\N,$ a sequence of sofic groups $G_i,$ and a sequence of finite sets $A_i$ such that $G$ embeds into $\prod_\cU (G_i^{\oplus A_i}\rtimes_{\beta_{A_i}} \Sym(A_i),d_{G_i,A_i}).$ Then $G$ is sofic.
\end{lem}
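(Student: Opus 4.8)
The plan is to reduce to the standard characterization of soficity: a countable group is sofic if and only if it embeds, as a group, into a metric ultraproduct $\prod_\cW(\Sym(D_j),d)$ of finite symmetric groups equipped with their normalized Hamming distances. Given the hypothesized embedding $\Phi\colon G\hookrightarrow\prod_\cU(G_i^{\oplus A_i}\rtimes_{\beta_{A_i}}\Sym(A_i),d_{G_i,A_i})$, it therefore suffices to realize $G$ inside such an ultraproduct. The crux is the observation that, for each fixed $i$, the metric $d_{G_i,A_i}$ is itself a limit of Hamming metrics built from sofic approximations of $G_i$. Concretely, since $G_i$ is sofic (and, replacing $G_i$ by the countable subgroup generated by the finitely-many-per-$a$, countably-many-overall values that actually occur in $\Phi$, we may assume $G_i$ countable), fix a sequence of sofic approximations $\psi_{i,k}\colon G_i\to\Sym(C_{i,k})$, and define $\theta_{i,k}\colon G_i^{\oplus A_i}\rtimes\Sym(A_i)\to\Sym(A_i\times C_{i,k})$ by letting $\theta_{i,k}(g,\sigma)$ be the permutation $(a,x)\mapsto(\sigma(a),\psi_{i,k}(g(\sigma(a)))x)$; that is, replace each $G_i$-coordinate by an element of $\Sym(C_{i,k})$ via $\psi_{i,k}$ and then use the wreath product $\Sym(C_{i,k})\wr\Sym(A_i)$ in its imprimitive action on $A_i\times C_{i,k}$.

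A direct count then gives
\begin{equation*}
d\bigl(\theta_{i,k}(g_1,\sigma_1),\theta_{i,k}(g_2,\sigma_2)\bigr)=\tfrac{1}{|A_i|}\#\{a:\sigma_1(a)\neq\sigma_2(a)\}+\tfrac{1}{|A_i|}\sum_{a\,:\,\sigma_1(a)=\sigma_2(a)=b}d\bigl(\psi_{i,k}(g_1(b)),\psi_{i,k}(g_2(b))\bigr),
\end{equation*}
and since $\psi_{i,k}$ is a sofic approximation the inner distances tend to $1$ when $g_1(b)\neq g_2(b)$ and equal $0$ otherwise, so that $\lim_{k\to\infty}d(\theta_{i,k}(g_1,\sigma_1),\theta_{i,k}(g_2,\sigma_2))=d_{G_i,A_i}((g_1,\sigma_1),(g_2,\sigma_2))$. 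Similar bookkeeping shows each $\theta_{i,k}$ is unital and that its multiplicativity defect on any fixed finite subset of $G_i^{\oplus A_i}\rtimes\Sym(A_i)$ is controlled by the multiplicativity defect of $\psi_{i,k}$ on an associated finite subset of $G_i$, hence tends to $0$ as $k\to\infty$. Consequently, for any ultrafilter $\cV$ on $\N$, the assignment $P\mapsto(\theta_{i,k}(P))_\cV$ is an isometric group embedding of $(G_i^{\oplus A_i}\rtimes\Sym(A_i),d_{G_i,A_i})$ into the metric ultraproduct $\prod_\cV(\Sym(A_i\times C_{i,k}),d)$.

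Composing with $\Phi$ and taking the ultraproduct over $i$ yields an embedding of $G$ into the iterated metric ultraproduct $\prod_\cU\prod_\cV(\Sym(A_i\times C_{i,k}),d)$, which is again a metric ultraproduct of finite symmetric groups (either by flattening the iterated ultraproduct over a Fubini product of ultrafilters, or, concretely, by enumerating $G=\{g_m\}_m$, choosing for each $i$ an index $k_i$ so large that $\theta_{i,k_i}$ is $1/i$-close in distance to $d_{G_i,A_i}$ and $1/i$-multiplicative on a finite subset of $G_i^{\oplus A_i}\rtimes\Sym(A_i)$ encoding the group operations among $\{\Phi(g_m)_i:m\le i\}$, and checking that $g\mapsto(\theta_{i,k_i}(\Phi(g)_i))_\cU$ is then a well-defined injective homomorphism $G\to\prod_\cU(\Sym(A_i\times C_{i,k_i}),d)$, using that $\Phi$ is an injective homomorphism, so $\lim_{i\to\cU}d_{G_i,A_i}(\Phi(gh)_i,\Phi(g)_i\Phi(h)_i)=0$ and $\lim_{i\to\cU}d_{G_i,A_i}(\Phi(g)_i,1)>0$ for $g\neq1$). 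Either way $G$ embeds into a metric ultraproduct of finite symmetric groups and is therefore sofic.

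The main obstacle is the identity relating $d_{G_i,A_i}$ to the Hamming distance under $\theta_{i,k}$: one must verify both that the ad hoc formula for $d_{G_i,A_i}$ is exactly the $k\to\infty$ limit of $d(\theta_{i,k}(\cdot),\theta_{i,k}(\cdot))$ and that $\theta_{i,k}$ is asymptotically multiplicative, which forces a coherent use of the imprimitive action together with unitality, asymptotic multiplicativity, and asymptotic freeness of $\psi_{i,k}$. Once this is in place, the ultraproduct manipulations are routine, and the final step is the cited equivalence between soficity and embeddability into metric ultraproducts of symmetric groups, which already absorbs the standard amplification turning a mere injective embedding into genuine sofic approximations.
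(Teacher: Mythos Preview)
The paper does not supply its own proof of this lemma: it is quoted verbatim as Corollary~3.5 of \cite{gao2024soficity} and used as a black box in the proof of Theorem~\ref{graph-wr-is-sofic}. Your argument is correct and is essentially the standard one: for each $i$ you push a sofic approximation $\psi_{i,k}$ of $G_i$ through the imprimitive wreath-product action to obtain $\theta_{i,k}\colon G_i^{\oplus A_i}\rtimes\Sym(A_i)\to\Sym(A_i\times C_{i,k})$, verify that the normalized Hamming distance of the image recovers $d_{G_i,A_i}$ in the limit $k\to\infty$, and then diagonalize over $i$ (or use a Fubini ultrafilter) to land $G$ in a metric ultraproduct of finite symmetric groups. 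Your computation of the distance under $\theta_{i,k}$ and the check of asymptotic multiplicativity are both accurate; the reduction to countable $G_i$ is harmless since $G$ is countable. This is exactly the mechanism behind the cited corollary, so there is nothing to contrast.
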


A key step in the following proof is that we need to check that the maps $q_s^i$ are always group homomorphisms, which will follow from the maps $\pi_s^i$ being graph embeddings.

\begin{thm}\label{graph-wr-is-sofic}
    Fix a sofic group $H.$ Consider a sofic action of a sofic group $G\curvearrowright^\alpha\Gamma = (V,E).$ Then the graph wreath product $*^\Gamma H \rtimes G$ is sofic.
\end{thm}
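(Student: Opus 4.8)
The plan is to mimic the strategy used in \cite{gao2024soficity} for generalized wreath products, now in the more general graph setting, by applying Lemma \ref{sofic-lemma}. First I would fix, using soficity of $G$ and of the action $\alpha$, a sequence of finite sets $A_i$ and maps $\varphi_i : G \to \Sym(A_i)$ that are unital, $(F_i,\epsilon_i)$-multiplicative, $d(\varphi_i(g),1)>1-\epsilon_i$ on $F_i\setminus\{1\}$, and $(F_i,W_i,\epsilon_i)$-orbit approximations of $\alpha$, for increasing $F_i\nearrow G$, $W_i\nearrow V$ and $\epsilon_i\searrow 0$. For each $i$ this gives a finite graph $B_i$, a large subset $S_i\subset A_i$, and graph embeddings $\pi_s^i : (W_i, E|_{W_i\times W_i}) \hookrightarrow B_i$ for $s\in S_i$ satisfying the equivariance relation. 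Since $H$ is sofic, fix a sofic embedding $H\hookrightarrow \prod_\mathcal{U}(H_i, d)$ for suitable finite-approximation groups $H_i$ (or work directly with sofic approximations $H\to \Sym(C_i)$). The target group in Lemma \ref{sofic-lemma} will be built from $\prod_\mathcal{U}\big(H_i^{\oplus A_i}\rtimes_{\beta_{A_i}}\Sym(A_i), d_{H_i,A_i}\big)$.

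The core of the argument is to construct, for each $i$ and each $s\in S_i$, a group homomorphism $q_s^i : *^\Gamma H \to H_i$ (or into $H$ composed with a sofic approximation) as follows: the vertex group $H_v$ should be mapped by "reading off the copy of $H$ sitting at the vertex $\pi_s^i(v)$ of $B_i$" — more precisely, by comparing the image of $W_i$ (or of the relevant finite subset of $V$) under $\pi_s^i$ inside $B_i$, one uses the graph-embedding property to ensure that the commutation relations defining $*^\Gamma H$ are respected: if $(v,w)\in E$ then $(\pi_s^i(v),\pi_s^i(w))\in E_{B_i}$, so the corresponding copies of $H$ commute in the local model, which is exactly why $q_s^i$ is well-defined as a homomorphism on the graph product. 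This is the step flagged in the remark preceding the theorem ("we need to check that the maps $q_s^i$ are always group homomorphisms, which will follow from the maps $\pi_s^i$ being graph embeddings"), and I expect it to be the main obstacle: one must carefully set up the finite local model of the graph product over $B_i$ so that the $\pi_s^i$ patch together compatibly, handle the vertices of $W_i$ versus all of $V$, and control which generators of $*^\Gamma H$ are seen by a given $s$.

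Next I would assemble these into a map $\Psi_i : *^\Gamma H \rtimes G \to (\text{local graph product over } B_i) \wr \Sym(A_i)$, sending $g\in G$ to $\varphi_i(g)$ in the $\Sym(A_i)$-coordinate and sending an element of $*^\Gamma H$ to the tuple $(q_s^i(\cdot))_{s\in A_i}$ in the $\oplus_{A_i}$-coordinate (with $q_s^i$ defined to be trivial for $s\notin S_i$). The orbit-approximation equivariance $\pi_{\varphi_i(g)s}^i(v) = \pi_s^i(\alpha(g^{-1})v)$ translates precisely into the crossed-product compatibility $\Psi_i(\gamma_1\gamma_2)\approx \Psi_i(\gamma_1)\Psi_i(\gamma_2)$ up to small Hamming error (the error coming from $s$ or $\varphi_i(g)s$ lying outside $S_i$, the multiplicativity defect of $\varphi_i$, and finitely many vertices moving outside $W_i$), and injectivity/separation in the ultraproduct comes from: $\varphi_i$ separating elements of $G$, together with the fact that a nontrivial element of $*^\Gamma H$ is detected on a positive-proportion set of coordinates $s$ once $W_i$ is large enough to contain its support and $\pi_s^i$ is an embedding there. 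Passing to the ultraproduct $\mathcal{U}$ containing $\{|A_i|\}$ then yields an embedding of $*^\Gamma H \rtimes G$ into $\prod_\mathcal{U}\big(H_i^{\oplus A_i}\rtimes_{\beta_{A_i}}\Sym(A_i), d_{H_i,A_i}\big)$ (after replacing the local graph product over $B_i$ by a product of copies of $H_i$ indexed by the finitely many vertices of $B_i$, and absorbing that index into a larger finite set), and Lemma \ref{sofic-lemma} finishes the proof. The analogous statement for $*^\Gamma M \rtimes G$ with $M$ Connes-embeddable and $G$ hyperlinear follows by the same construction, replacing sofic approximations and Hamming distance with the $2$-norm on matrix algebras and the graph product of von Neumann algebras from \cite{CaFi17}.
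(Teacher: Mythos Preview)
Your proposal is essentially the paper's proof: the same choice of approximations $\varphi_i$, finite graphs $B_i$, graph embeddings $\pi_s^i$, the induced homomorphisms $q_s^i:\ast^{\Gamma_i}H\to\ast^{B_i}H$, the assembled map into $H_i^{\oplus A_i}\rtimes\Sym(A_i)$, and the appeal to Lemma~\ref{sofic-lemma}. Two small points where you drift from the paper are worth flagging.

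First, your final ``absorbing'' step---replacing the graph product over $B_i$ by a direct product of copies of your $H_i$ indexed by vertices of $B_i$---does not work: a graph product $\ast^{B_i}H$ does not embed in $\prod_{v\in V_{B_i}}H$ unless $B_i$ is complete. The paper sidesteps this entirely by taking the sofic group appearing in Lemma~\ref{sofic-lemma} to be $H_i=\ast^{B_i}H$ itself, which is sofic by \cite{CHR}; no further unpacking of $H$ into sofic approximations is needed.

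Second, you ask that the $\varphi_i$ simultaneously be orbit approximations \emph{and} free (i.e.\ $d(\varphi_i(g),1)>1-\epsilon_i$ for $g\in F_i\setminus\{1\}$), so that the ultraproduct map is injective outright. The definition of sofic action does not guarantee this, and you would need a short extra argument (e.g.\ tensoring with an independent sofic approximation of $G$). The paper instead does not require freeness of $\varphi_i$: it shows only that $\ker\rho\subset G$, and then uses the injection $\iota:\ast^\Gamma H\rtimes G\to(\ast^\Gamma H\rtimes G)/\ker\rho\times G$, $hg\mapsto(hg\ker\rho,\,g)$, invoking soficity of $G$ only through the second factor.
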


\begin{proof}
    Fix increasing sequences of finite subsets $F_i\nearrow G$ and $W_i\nearrow V.$ Fix a decreasing sequence $\ee_i\searrow 0.$ For each $i,$ let $\varphi_i:H\to\Sym(A_i)$ be a unital, $(F_i,\ee_i)$-multiplicative, $(F_i,W_i,\ee_i)$-orbit approximation of $\alpha.$ Then there exist finite graphs $B_i = (V_{B_i},E_{B_i})$ and subsets $S_i\subset A_i$ such that $|S_i| > (1-\ee_i)|A_i|$ and for each $s\in S_i$ there is a graph embedding $\pi_s^i:W_i \to V_{B_i}$ such that $\pi^i_{\varphi_i(g)s}(v) = \pi_s^i(\alpha(g^{-1})v)$ for all $s\in S_i,$ $g\in F_i,$ and $v\in W_i$, whenever $\varphi_i(g)s\in S_i$ and $\alpha(g^{-1})v\in W_i.$ 

    Denote $\Gamma_i = (W_i, E|_{W_i\times W_i}).$ Let $H_i = *^{B_i} H$, which is sofic since $H$ is. Let $p_i: *^\Gamma H\to *^{\Gamma_i} H$ be the map which is identity on $*^{\Gamma_i} H$ and takes all other vertices to the identity. Let $q^i_s: *^{\Gamma_i} H \hookrightarrow *^{B_i}H = H_i$ be the homomorphism induced by the graph embedding $\pi_s^i : \Gamma_i \hookrightarrow B_i.$ Let $P^i_s = q^i_s \circ p_i.$
    
    Let the action of $G$ on $*^\Gamma H$ via $\alpha$ be denoted by $\beta$, i.e., we wish to show $*^\Gamma H\rtimes_\beta G$ is sofic. We define $\rho_i : *^\Gamma H\rtimes_\beta G \to H_i^{\oplus A_i}\rtimes_{\beta_{A_i}}\Sym(A_i)$ as follows:
    $$\rho_i(hg) = \left(\bigoplus_{s\in S_i} P^i_s(h)\oplus \bigoplus_{a\in A_i\setminus S_i}1_{H_i}\right) \cdot \varphi_i(g)$$
    where $h\in *^\Gamma H$ and $g\in G.$ Let $\cU$ be a free ultrafilter on $\N.$ Let $\rho : *^\Gamma H\rtimes G \to \prod_\cU(H_i^{\oplus A_i}\rtimes_{\beta_{A_i}}\Sym(A_i),d_{H_i,A_i})$ be given by $\rho(hg) = (\rho_i(hg))_\cU.$ 

    It now suffices to show that $\rho$ is a homomorphism and that the map $\iota: *^\Gamma H \rtimes G \to (*^\Gamma H\rtimes G)/\ker(\rho) \times G $ defined by $\iota(hg) = (hg\ker(\rho),g)$ where $h\in *^\Gamma H$ and $g\in G,$ is injective. This is because the previous lemma would imply $(*^\Gamma H\rtimes G)/\ker(\rho) \hookrightarrow \prod_\cU(H_i^{\oplus A_i}\rtimes_{\beta_{A_i}}\Sym(A_i),d_{H_i,A_i})$ is sofic.

    We start by showing $\rho$ is a homomorphism. Let $h_1,h_2\in *^\Gamma H$ and $g_1,g_2\in G.$ The set of vertices on which each $h_j,$ $j=1,2$ is supported is finite, and so for large enough $i,$ $\supp(g_2)\subset W_i,$ $\alpha(g_1)\supp(h_2)\subset W_i,$ and $g_1,g_1^{-1},g_2 \in F_i.$ Furthermore, since $\varphi_i$ is $(F_i,\ee_i)$-multiplicative, we have
    $$d(\varphi_i(g_1^{-1})^{-1},\varphi_i(g_1)) < \ee_i.$$
    Let $\Tilde{S_i} = \{s\in S_i : \varphi_i(g_1^{-1})^{-1}s = \varphi_i(g_1)s\}$. Then for large enough $i,$ $|\Tilde{S}_i| > (1-2\ee_i)|A_i|.$ Now for large enough $i,$
    \begin{equation*}
        \begin{split}
            \rho_i(h_1g_1h_2g_2) &= \rho_i(h_1\beta_{g_1}(h_2)g_1g_2)\\
    &= \left(\bigoplus_{s \in \Tilde{S}_i} P^i_s(h_1\beta_{g_1}(h_2)) \oplus \bigoplus_{a \in A_i \setminus \Tilde{S}_i} 1_{H_i}\right) \cdot \varphi_i(g_1g_2)\\
    &= \left(\bigoplus_{s \in \Tilde{S}_i} P^i_s(h_1)P^i_s(\beta_{g_1}(h_2)) \oplus \bigoplus_{a \in A_i \setminus \Tilde{S}_i} 1_{H_i}\right) \cdot \varphi_i(g_1g_2)
        \end{split}
    \end{equation*}

We now claim that $P^i_s(\beta_{g_1}(h_2)) = P^i_{\varphi_i(g_1^{-1}s}(h_2)$ for all $s\in \Tilde{S}_i\cap \varphi_i(g_1)\Tilde{S}_i$ and for all $i$ sufficiently large. Since $h\in *^\Gamma H,$ we may write $h_2 = h_{v_1}\cdots h_{v_n}$ where $h_{v_k} \in H_{v_k}$, and $H_{v_k}$ is the copy of $H$ at the vertex $v_k \in V.$ For $i$ large enough, $\alpha(g_1)\supp(h_2)$ and $\supp(h_2)$ are both contained in $W_i,$ so that $p_i(\beta(g_1)h_2) = \beta(g_1)h_2$ and $p_i(h_2) = h_2.$ 

Since $q_s^i$ is a homomorphism, it suffices to consider $h_2 \in H_v$ for some $v\in W_i \cap \alpha(g_1^{-1})W_i.$ We compute that $P_s^i(\beta(g_1)h_2) = q_s^i(\beta(g_1)h_2) = h' \in H_{\pi_s^i(\alpha(g_1)v)}$, where $h'$ is the element in $H_{\pi_s^i(\alpha(g_1)v)}$ corresponding to $h_2$ via our isomorphism of $H_v$ and $H_{\pi_s^i(\alpha(g_1)v)}$. 

On the other hand, $P^i_{\varphi_i(g_1^{-1})s}(h_2) = q^i_{\varphi_i(g_1^{-1})s}(h_2) = h'' \in H_{\pi^i_{\varphi_i(g_1^{-1})s}}(v)$ where again $h''$ corresponds to $h_2$ via the isomorphism of all copies of $H.$ But for our choices of $s$ and $i,$ we have that $\pi^i_{\varphi_i(g_1^{-1})s}(v) = \pi^i_{s}(\alpha(g_1)v).$ Therefore we actually have $h' = h''$ and hence we have shown our claim.

So,
\begin{equation*}
    \rho_i(h_1g_1h_2g_2) = \left(\bigoplus_{s \in \tilde{S}_i \cap \varphi_i(g_1)\tilde{S}_i} P^i_s(h_1)P^i_{\varphi_i(g_1^{-1})s}(h_2) \oplus \bigoplus_{a \in A_i \setminus (\tilde{S}_i \cap \varphi_i(g_1)\tilde{S}_i)} \kappa_a\right) \cdot \varphi_i(g_1g_2)
\end{equation*}
for some $\kappa_a \in H_i$. On the other hand,
\begin{equation*}
\begin{split}
    \rho_i(h_1g_1)\rho_i(h_2g_2) &= \left(\bigoplus_{s \in S_i} P^i_s(h_1) \oplus \bigoplus_{a \in A_i \setminus S_i} 1_{H_i}\right) \cdot \varphi_i(g_1) \cdot (\bigoplus_{s \in S_i} P^i_s(h_2) \oplus \bigoplus_{a \in A_i \setminus S_i} 1_{H_i}) \cdot \varphi_i(g_2)\\
    &= \left(\bigoplus_{s \in \tilde{S}_i \cap \varphi_i(g_1)\tilde{S}_i} P^i_s(h_1)P^i_{\varphi_i(g_1)^{-1}s}(h_2) \oplus \bigoplus_{a \in A_i \setminus (\tilde{S}_i \cap \varphi_i(g_1)\tilde{S}_i)} \lambda_a\right) \cdot \varphi_i(g_1)\varphi_i(g_2)
\end{split}
\end{equation*}
for some $\lambda_a \in H_i$. Thus, for large enough $i$,
\begin{equation*}
\begin{split}
    d_{H_i, A_i}(\rho_i(h_1g_1h_2g_2), \rho_i(h_1g_1)\rho_i(h_2g_2)) &\leq d(\varphi_i(g_1g_2), \varphi_i(g_1)\varphi_i(g_2)) + \frac{|A_i \setminus (\tilde{S}_i \cap \varphi_i(h_1)\tilde{S}_i)|}{|A_i|}\\
    &< \ee_i + 4\ee_i\\
    &= 5\ee_i\\
    &\rightarrow 0
\end{split}
\end{equation*}

This proves that $\rho$ is a group homomorphism. Set $N = \textrm{ker}(\rho)$. Recall we defined $\iota: *^\Gamma H\rtimes G \rightarrow (*^\Gamma H\rtimes G)/N \times H$ by $\iota(hg) = (hgN, g)$ where $h \in *^\Gamma H$ and $g \in G$. It now suffices to prove $\iota$ is injective.

Clearly, $\textrm{ker}(\iota) \subset *^\Gamma H$. Assume to the contrary that $\textrm{ker}(\iota)$ is not trivial and let $h \in \textrm{ker}(\iota) \setminus \{1\}$. Then $\rho_i(h) = (\oplus_{s \in S_i} P^i_s(h) \bigoplus \oplus_{a \in A_i \setminus S_i} 1_{H_i})$. Since $h \neq 1$, for all $i$ sufficiently large $P_s^i(h) \neq 1_{H_i}$ for all $s\in S_i.$ (This happens when $\supp(h)\subset W_i$.) Then we have
\begin{equation*}
    d_{G_i, A_i}(\rho_i(h), 1) = \frac{|S_i|}{|A_i|} = 1 - \ee_i \rightarrow 1
\end{equation*}

In particular, $\rho(h) \neq 1$, so $h \notin N$. But then $\iota(h) \neq 1$, a contradiction. This proves the claim.
    
\end{proof}

The following result can be proved in a very similar way. As such, we only provide an outline of the proof here:

\begin{thm}
    Fix a Connes-embeddable tracial von Neumann algebra $M.$ Consider a sofic action of a hyperlinear group $G\curvearrowright\Gamma = (V,E).$ Then the graph wreath product $*^\Gamma M \rtimes G$ is Connes-embeddable.
\end{thm}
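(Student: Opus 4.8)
The plan is to run the proof of Theorem~\ref{graph-wr-is-sofic} essentially verbatim, replacing every ``sofic'' ingredient by its Connes-embeddable / hyperlinear counterpart. Fix increasing finite sets $F_i\nearrow G$, $W_i\nearrow V$ and $\ee_i\searrow 0$, and for each $i$ a unital, $(F_i,\ee_i)$-multiplicative $(F_i,W_i,\ee_i)$-orbit approximation $\varphi_i\colon G\to\Sym(A_i)$ of $\alpha$, with associated finite graphs $B_i$, large subsets $S_i\subset A_i$, and graph embeddings $\pi_s^i\colon\Gamma_i:=(W_i,E|_{W_i\times W_i})\hookrightarrow B_i$. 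Put $N_i={*}^{B_i}M$, which is Connes-embeddable since $M$ is, by \cite{CaFi17,caspers2015connes,charlesworth2021matrix}. Since graph products of tracial von Neumann algebras are functorial under induced-subgraph embeddings, each $\pi_s^i$ induces a trace-preserving normal $*$-embedding $q_s^i\colon{*}^{\Gamma_i}M\hookrightarrow N_i$. The role played in the group case by the retraction $p_i\colon{*}^\Gamma H\to{*}^{\Gamma_i}H$ is now played by the trace-preserving conditional expectation $\mathbb{E}_i\colon{*}^\Gamma M\to{*}^{\Gamma_i}M$ --- a retraction by a normal $*$-homomorphism no longer exists, but $\mathbb{E}_i$ equals the identity on any fixed element once $i$ is large --- and we set $P_s^i=q_s^i\circ\mathbb{E}_i$.

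Next I would set up the approximating algebras and the embedding. Let $\mathcal{N}_i=N_i\mathbin{\overline{\otimes}}M_{A_i}(\C)$, with $M_{A_i}(\C)$ carrying its normalized trace and matrix units $(e_{ab})_{a,b\in A_i}$, and let $p_\sigma\in M_{A_i}(\C)$ denote the permutation matrix of $\sigma\in\Sym(A_i)$. Write $\beta$ for the $G$-action on ${*}^\Gamma M$ induced by $\alpha$, $u_g$ for the canonical unitaries of ${*}^\Gamma M\rtimes_\beta G$, and $\lambda_g$ for those of $L(G)$. Define $\rho_i\colon{*}^\Gamma M\rtimes_\beta G\to\mathcal{N}_i\mathbin{\overline{\otimes}}L(G)$ on generators by
\begin{gather*}
\rho_i(m)=\Bigl(\sum_{s\in S_i}P_s^i(m)\otimes e_{ss}+\sum_{a\in A_i\setminus S_i}1_{N_i}\otimes e_{aa}\Bigr)\otimes 1_{L(G)},\\
\rho_i(u_g)=(1_{N_i}\otimes p_{\varphi_i(g)})\otimes\lambda_g,
\end{gather*}
and put $\rho=(\rho_i)_\cU\colon{*}^\Gamma M\rtimes G\to\prod_\cU\bigl(\mathcal{N}_i\mathbin{\overline{\otimes}}L(G)\bigr)$. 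Using the \emph{permutation matrix} $p_{\varphi_i(g)}$ rather than a crossed-product unitary is essential here: one has $\|p_\sigma-p_\tau\|_2=\sqrt{2\,d(\sigma,\tau)}$, so the Hamming-small multiplicativity defect of $\varphi_i$ turns into a $\|\cdot\|_2$-small defect of $\rho_i$. The combinatorial heart is exactly the identity from Theorem~\ref{graph-wr-is-sofic}: for large $i$ and $s$ ranging over a subset of $A_i$ of density tending to $1$ one has $P_s^i(\beta_g(m))=P^i_{\varphi_i(g^{-1})s}(m)$, which follows from $\pi^i_{\varphi_i(g^{-1})s}(v)=\pi_s^i(\alpha(g)v)$ together with the functoriality of $q_s^i$ in $\pi_s^i$; combined with $(F_i,\ee_i)$-multiplicativity of $\varphi_i$ this yields $\|\rho_i(xy)-\rho_i(x)\rho_i(y)\|_2\to 0$, so $\rho$ is a $*$-homomorphism.

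Then I would check that $\rho$ is trace preserving; this simultaneously forces it to be an embedding and is the point at which hyperlinearity of $G$ enters, playing the role of the injectivity of $\iota$ in Theorem~\ref{graph-wr-is-sofic}. As $q_s^i$ and $\mathbb{E}_i$ are trace preserving, $\tau(P_s^i(m))=\tau(m)$ as soon as $\supp(m)\subset W_i$, so a direct count gives $\tau_{\mathcal{N}_i}\bigl(\sum_{s\in S_i}P_s^i(m)\otimes e_{ss}+\sum_{a\notin S_i}1\otimes e_{aa}\bigr)=\tfrac{|S_i|}{|A_i|}\tau(m)+\tfrac{|A_i\setminus S_i|}{|A_i|}\to\tau(m)$; the extra $\lambda_g$ tensor factor contributes $\tau_{L(G)}(\lambda_g)=\delta_{g=1_G}$, matching $\tau_{{*}^\Gamma M\rtimes G}(mu_g)=\delta_{g=1_G}\tau(m)$, and this Kronecker factor is precisely what the orbit approximation $\varphi_i$ alone cannot produce, since it need not be a sofic approximation of $G$. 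Hence $\tau\circ\rho=\tau$ on the canonical dense $*$-subalgebra, so $\rho$ is trace preserving, hence faithful and normal. Finally $\prod_\cU(\mathcal{N}_i\mathbin{\overline{\otimes}}L(G))$ is Connes-embeddable: $N_i={*}^{B_i}M$ is Connes-embeddable, $M_{A_i}(\C)$ is finite dimensional, $L(G)$ is Connes-embeddable because $G$ is hyperlinear, and tensor products, matrix amplifications and ultraproducts of Connes-embeddable tracial von Neumann algebras are again Connes-embeddable. Thus ${*}^\Gamma M\rtimes G$ embeds trace-preservingly into a Connes-embeddable algebra, i.e.\ it is Connes-embeddable.

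I expect the main difficulty to be choosing the correct ``matrix model'' $\mathcal{N}_i$ and verifying trace preservation of $\rho$. Concretely one must (i) realise $\Sym(A_i)$ through permutation matrices inside $M_{A_i}(\C)$ so that approximate multiplicativity survives; (ii) use the block-diagonal ``support'' embedding $m\mapsto\sum_{s\in S_i}P_s^i(m)\otimes e_{ss}+\cdots$ rather than a tensor-power embedding, so that the trace is preserved in the limit instead of decaying like $\tau(m)^{|S_i|}$; and (iii) restore faithfulness in the $G$-direction by tensoring with $L(G)$, which is exactly where hyperlinearity of $G$ is used, in analogy with the use of soficity of $G$ in the proof of Theorem~\ref{graph-wr-is-sofic}. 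Everything else --- functoriality of graph products of tracial von Neumann algebras under induced-subgraph embeddings, existence of the conditional expectations $\mathbb{E}_i$, and permanence of Connes-embeddability under the operations above --- is standard, and the combinatorial identity $P_s^i(\beta_g(m))=P^i_{\varphi_i(g^{-1})s}(m)$ on a density-one set carries over verbatim from the proof of Theorem~\ref{graph-wr-is-sofic}.
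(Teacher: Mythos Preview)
Your proposal is correct and follows essentially the same route as the paper's own proof sketch: the same orbit-approximation data $(\varphi_i,B_i,S_i,\pi_s^i)$, the same maps $P_s^i=q_s^i\circ\mathbb{E}_i$ with the conditional expectation $\mathbb{E}_i$ replacing the group-theoretic retraction, a block-diagonal embedding into $N_i\otimes M_{|A_i|}(\C)$ with $\varphi_i(g)$ realised as a permutation matrix, and the tensor factor $L(G)$ to recover faithfulness in the $G$-direction via hyperlinearity. The only cosmetic differences are that the paper places $L(G)$ outside the ultraproduct rather than inside, and uses $0$ rather than $1_{N_i}$ on the $A_i\setminus S_i$ blocks; neither affects the argument.
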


\begin{proof}[Proof sketch]
    The proof follows the same outline as that of Theorem \ref{graph-wr-is-sofic}. One first observes the analogy of Lemma \ref{sofic-lemma} clearly holds in the von Neumann algebra setting by changing direct sums to tensor products and semidirect products with symmetric groups to tensoring with matrix algebras, i.e., $\prod_\cU M_i^{\otimes A_i} \otimes \M_{|A_i|}(\C)$, where $M_i$ is a sequence of Connes-embeddable tracial von Neumann algebras and $A_i$ is a sequence of finite sets, is Connes-embeddable. Now, again fix increasing sequences of finite subsets $F_i\nearrow G$ and $W_i\nearrow V.$ Fix a decreasing sequence $\ee_i\searrow 0.$ For each $i,$ let $\varphi_i:H\to\Sym(A_i)$ be a unital, $(F_i,\ee_i)$-multiplicative, $(F_i,W_i,\ee_i)$-orbit approximation of $\alpha.$ Then there exist finite graphs $B_i = (V_{B_i},E_{B_i})$ and subsets $S_i\subset A_i$ such that $|S_i| > (1-\ee_i)|A_i|$ and for each $s\in S_i$ there is a graph embedding $\pi_s^i:W_i \to V_{B_i}$ such that $\pi^i_{\varphi_i(g)s}(v) = \pi_s^i(\alpha(g^{-1})v)$ for all $s\in S_i,$ $g\in F_i,$ and $v\in W_i$, whenever $\varphi_i(g)s\in S_i$ and $\alpha(g^{-1})v\in W_i.$ 

    Denote $\Gamma_i = (W_i, E|_{W_i\times W_i}).$ Let $M_i = *^{B_i} M$, which is Connes-embeddable since $M$ is. Let $E_i: *^\Gamma M\to *^{\Gamma_i} M$ be the conditional expectation and let $q^i_s: *^{\Gamma_i} M \hookrightarrow *^{B_i}M = M_i$ be the embedding induced by the graph embedding $\pi_s^i : \Gamma_i \hookrightarrow B_i.$ Let $P^i_s = q^i_s \circ E_i.$ Then there is an embedding $\iota: *^\Gamma M \rtimes G \to (\prod_\cU M_i^{\otimes A_i} \otimes \M_{|A_i|}(\C)) \otimes L(G)$ defined by,
    \begin{equation*}
        \iota(xg) = [(\bigoplus_{s \in S_i} P^i_s(x) \oplus \bigoplus_{a \in A_i \setminus S_i} 0) \cdot \varphi_i(g)]_{i \to \cU} \otimes g
    \end{equation*}
    where we regard $\varphi_i(g) \in \Sym(A_i)$ as a permutation matrix in $\M_{|A_i|}(\C)$. That this is a well-defined embedding can be proved by the same methods as in the proof of Theorem \ref{graph-wr-is-sofic}. As both $\prod_\cU M_i^{\otimes A_i} \otimes \M_{|A_i|}(\C)$ and $L(G)$ are Connes-embeddable, this implies $*^\Gamma M \rtimes G$ is Connes-embeddable.
\end{proof}

\bibliographystyle{amsalpha}
\bibliography{inneramen}

\end{document}